\documentclass[preprint,12pt]{imsart}
\RequirePackage{amsthm,amsmath,amsfonts,amssymb}
\RequirePackage[numbers]{natbib}

\RequirePackage[colorlinks,citecolor=blue,urlcolor=blue]{hyperref}
\RequirePackage{graphicx}
\usepackage{booktabs,dcolumn,caption}
\captionsetup{skip=0.333\baselineskip}
\newcolumntype{d}[1]{D{.}{.}{#1}}


\startlocaldefs
\numberwithin{equation}{section}
\theoremstyle{plain}
\newtheorem{thm}{Theorem}[section]
\newtheorem{lem}{Lemma}[section]
\newtheorem{cor}{Corollary}[section]

\newtheorem{assm}{Assumption}
\newtheorem{example}{Example}

\newtheorem{rem}{Remark}[section]
\endlocaldefs

\usepackage{a4wide}

\allowdisplaybreaks


%
\begin{document}

\begin{frontmatter}

  \title{On singular values of  data matrices with  general independent columns}
  
  \runtitle{Singular values of data matrices with general independent columns }

  \begin{aug}
    \author{\fnms{Tianxing}  \snm{Mei}\ead[label=e1]{meitx@hku.hk}},
    \author{\fnms{Chen} \snm{Wang}\ead[label=e2]{stacw@hku.hk}}
    \and
    \author{\fnms{Jianfeng} \snm{Yao~}\ead[label=e3]{jeffyao@hku.hk}}
    

    \address{
      Department of Statistics and Actuarial Science \\
      The University of Hong Kong \\
      \printead{e1,e2,e3}}
  \end{aug}

 \begin{abstract}
    In this paper, we analyse singular values of a large $p\times n$
    data matrix  $\mathbf{X}_n= (\mathbf{x}_{n1},\ldots,\mathbf{x}_{nn})$ where the column
    $\mathbf{x}_{nj}$'s are independent $p$-dimensional vectors, possibly with different
    distributions.
    Such data matrices are common in high-dimensional
    statistics.
    Under a key assumption that the covariance matrices
    $\mathbf{\Sigma}_{nj}=\text{Cov}(\mathbf{x}_{nj})$ can be asymptotically simultaneously
    diagonalizable, and appropriate convergence of their spectra, we
    establish a limiting distribution for the singular values of
    $\mathbf{X}_n$ when both dimension $p$ and $n$ grow to infinity in
    a comparable magnitude.
    The matrix model goes beyond and includes many  existing works on different types of sample covariance
    matrices, including the weighted sample covariance matrix, the Gram
    matrix model and the sample covariance matrix of linear times series models.
    Furthermore, we develop two 
    applications of our  general approach.
    First, we obtain the existence and uniqueness of a new
    limiting spectral
    distribution of realized covariance matrices for a 
    multi-dimensional diffusion process with anisotropic time-varying
    co-volatility processes.
    Secondly,  we derive the limiting spectral
    distribution for singular values of the data matrix for a recent 
    matrix-valued auto-regressive model.
    Finally, for a generalized finite mixture model, the limiting spectral distribution for singular values of the data matrix is obtained.
  \end{abstract}

  \begin{keyword}[class=MSC2020]
    \kwd[Primary ]{62H10}
    \kwd[; secondary ]{60B20}
  \end{keyword}

  \begin{keyword}
    \kwd{Large data matrix}
    \kwd{Large sample covariance matrices}
    \kwd{Singular value  distribution}
    \kwd{Eigenvalue distribution}
    \kwd{Separable covariance matrix}
    \kwd{Realized covariance matrix}
    \kwd{Matrix-valued autoregressive model}
  \end{keyword}
\end{frontmatter}

\def\lb{\label}
\def\ct{\cite}
\newcommand{\rf}[1]{(\ref{#1})}
\newcommand{\scr}[1]{\mathscr{#1}}
\newcommand{\norm}[2]{\left\|{#1}\right\|_{#2}}
\newcommand{\abs}[1]{\left\vert#1\right\vert}
\newcommand{\set}[1]{\left\{#1\right\}}
\def\R{\mathbb R}
\def\Q{\mathbb{Q}}
\def\A{\mathcal{A}}
\def\E{\mathbb{E}}
\def\D{\scr{D}}
\def\F{\scr F}
\def\K{\scr K}
\def\B{\scr B}
\def\N{\mathbb N}
\renewcommand{\P}{\mathbb P}
\def\R{\mathbb R}
\def\Z{\mathbb Z}
\def\d{{\rm d}}
\def\bfE{\mbox{\boldmath $E$}}
\def\bfP{\mbox{\boldmath $P$}}
\def\bfQ{\mbox{\boldmath $Q$}}
\newcommand{\one}[1]{\mathbbm{1}_{#1}}
\newcommand{\ones}[2]{\mathbbm{1}_{#1}(#2)} 
\def\bg{\begin}
\def\ed{\end}
\def\be{\bg{equation}}\def\de{\ed{equation}}
\def\bgar{\bg{eqnarray}}\def\edar{\end{eqnarray}}
\newcommand{\nnb}{\nonumber}
\def\l{\left}\def\r{\right}
\def\beqlb{\begin{eqnarray}}\def\eeqlb{\end{eqnarray}}
\def\beqnn{\begin{eqnarray*}}\def\eeqnn{\end{eqnarray*}}
\def\alp{\alpha}
\def\bt{\beta}
\def\gm{\gamma}
\def\Gm{\Gamma}
\def\dlt{\delta}
\def\Dlt{\Delta}
\def\eps{\varepsilon}
\def\e{\eps}
\def\tht{\theta}
\def\Tht{\Theta}
\def\kp{\kappa}
\def\lmd{\lambda}
\def\Lmd{\Lambda}
\def\vro{\varrho}
\def\sgm{\sigma}
\def\Sgm{\Sigma}
\def\vph{\varphi}
\def\omg{\omega}
\def\Omg{\Omega}
\def\zt{\zeta}
\def\fa{\forall}
\def\emp{\emptyset}
\def\ex{\exists}
\def\bs{\backslash}
\def\fr{\frac}
\def\nbl{\nabla}
\def\pat{\partial}
\def\ift{\infty}
\def\l{\left}
\def\r{\right}
\def\({\l(}
\def\){\r)}
\def\bcap{\bigcap}
\def\bcup{\bigcup}
\def\lg{\l\langle}
\def\rg{\r\rangle}
\def\lar{\leftarrow}
\def\Lar{\Leftarrow}
\def\rar{\rightarrow}
\def\Rar{\Rightarrow}
\def\lla{\longleftarrow}
\def\Lla{\Longleftarrow}
\def\To{\Longrightarrow}
\def\lra{\leftrightarrow}
\def\Lra{\Leftrightarrow}
\def\llra{\longleftrightarrow}
\def\Llra{\Longleftrightarrow}
\def\q{\quad}
\def\mpb{\vskip6pt}
\def\gap{\mathop{\text{gap}}}
\def\var{\mathop{\text{Var}}}
\def\cov{\mathop{\text{Cov}}}
\def\diag{\mathop{\text{diag}}}
\def\ess{\mathop{\text{ess}}}
\def\hess{\mathop{\text{Hess}}}
\def\ric{\mathop{\text{Ric}}}
\def\lip{\mathop{\text{Lip}}}
\def\dist{\mathop{\text{dist}}}
\def\supp{\mathop{\text{supp}}}
\def\sep{\mathop{\text{sep}}}
\def\dsum{\displaystyle\sum}
\def\prodd{\displaystyle\prod}
\def\dsup{\displaystyle\sup}
\def\dinf{\displaystyle\inf}
\def\dlim{\displaystyle\lim}
\def\dlimsup{\displaystyle\limsup}
\def\dliminf{\displaystyle\liminf}
\newcommand{\jff}[2]{\int_{#1}^{#2}}
\newcommand{\jf}[1]{\int_{#1}}
\def\d{{\mbox{\rm d}}}
\renewcommand{\hat}{\widehat}
\renewcommand{\bar}{\overline}
\def\tld{\widetilde}
\def\chk{\check} 
\newcommand{\vv}[1]{\boldsymbol{#1}}
\newcommand{\red}[1]{{\color{red} #1}}
\newcommand{\yel}[1]{{\color{yellow} #1}}
\newcommand{\blu}[1]{{\color{blue} #1}}
\newcommand{\grn}[1]{{\color{green} #1}}

\def\red#1{}

\section{Introduction}
Large  data matrices are now  common in many areas of research 
such as genomic data analysis,   on-line recommendation systems or
portfolio managements. Consider a $p\times n$ data matrix 
$\vv{X}_n=(\vv{x}_{n1},\ldots,\vv{x}_{nn})$ where the columns
$\vv{x}_{nj}$'s are  in $\mathbb{R}^p$ or $\mathbb{C}^p$.
Singular values of the normalized data matrix  $\vv{X}_n/\sqrt n$, or
equivalently, the eigenvalues of its square 
\begin{equation}\label{eq:1.1}
\vv{S}_n=\frac{1}{n}\vv{X_n}\vv{X}_n^*=\fr{1}{n}\sum_{j=1}^n \vv{x}_{nj}\vv{x}^*_{nj},
\end{equation}
have primary importance for analysis of the data matrix
$\vv{X}_n$. (Here $\vv{x}^*$ represents the complex conjugate.)
For example, the singular value decomposition of  $\vv{X}_n$ can provide an efficient
data reduction if its  ``signal''  singular values are well separated
from the background noise singular values.

Recall that  empirical spectral
distribution (ESD) of  a  Hermitian  matrix $\vv{B}_n$, $\mu_{\vv{B}_n}$,
is the normalized counting measure of
its  real-valued eigenvalues.
If as $n\to\infty$, $\mu_{\vv{B_n}}$ converges weakly to a probability measure $\mu$, then we call $\mu$
the limiting spectral distribution (LSD) of the matrix sequence $\{\vv{B}_n\}$.
In this 
paper, we study the existence and uniqueness of LSDs for the (squared) data matrix  $\vv{S}_n$
in the following high-dimensional setting:
\begin{equation}\label{eq:1.3}
  n\to\infty, \quad p=p(n)\to\infty  \text{~~ such that ~~} \frac{p}{n}\to c\in(0,\infty).
\end{equation}


The study of spectrum of the matrix $\vv{S}_n$ has a long history,
along with  increasingly complex structures for  the joint distribution
of the column vectors $\{\vv{x}_{nj}\}$, see
\cite{BS10,PA14,Yao15} for a detailed review.  Here we recall a few
results relevant to the present paper.

\begin{enumerate}
\item[(a)\ ]
  {\em The Mar\v{c}enko-Pastur law.} 
  One very first result 
  is established in the seminal paper
  \red{Marc\v{c}enko and Pastur (1961)}\cite{MP} for the matrix
  \[ \vv{S}_n = \frac1n \sum_{j=1}^n \tau_j \vv{z}_j\vv{z}_j^*
  \]
  where 
  $\{\tau_j\}$ is a sequence of non-negative numbers, and  $\{\vv{z}, \vv{z}_j\}$  a
  sequence of 
  i.i.d.  $p$-dimensional vectors, centred and isotropic in the sense
  that
  $\E(\vv{z})=0$ and $\cov(\vv{z})=\vv{I}_p$.
  Under appropriate conditions on the forth
  moment of $\vv{z}_j$, 

  and ergodicity of the numerical sequence $\{\tau_j\}$,
  the celebrated Mar\v{c}enko-Pastur law is established in \cite{MP} as
  the LSD of $\vv{S}_n$ under the high-dimensional limit \eqref{eq:1.3}. 
  Note that this model is a particular case of \eqref{eq:1.1} with
  $\vv{x}_{nj}=\sqrt{\tau_j} \vv{z}_j$.

  This model has been  recently extended  in \cite{PajorPastur} where the
  same LSD is established for a wider family of so-called ``good
  vectors'' $ \vv{z}$.  
\item[(b)\ ] {\em Sample covariance matrices}.
  Motivated by statistical multivariate analysis, 
  \cite{BS94,BS95} analysed a class of {sample covariance matrices} of
  the form
  \[  \vv{S}_n = \frac1n \sum_{j=1}^n \vv{\Sigma}_p^{1/2} \vv{z}_j\vv{z}_j^*
  \vv{\Sigma}_p^{1/2},
  \]
  where
  $\{\vv{\Sigma}_p\}$ is a sequence of non-negative definite matrices,
  $\{\vv{z}, \vv{z}_j\}$ an i.i.d. sequence  such that
  the $p$-coordinates of the
  population $\vv{z}$ are also i.i.d. (univariate) with mean zero and
  variance 1.
  
  The name of sample covariance  matrix
  originates  from the fact that the matrix $\vv{S}_n$
  has also the form
  \eqref{eq:1.1} with $\vv{x}_{nj} = \vv{\Sigma}_p^{1/2} \vv{z}_j$,
  which indeed is an i.i.d. sample from the population
  $\vv{x} = \vv{\Sigma}_p^{1/2} \vv{z}$. This population is centred
  with population covariance matrix  $ \cov(\vv{x})=\vv{\Sigma}_p$.
  Under the sole condition of $\mu_{\vv{\Sigma}_p}$ having a weak limit,
  it is shown in \cite{BS94,BS95} that $\vv{S}_n$ has an LSD in the
  form of  a generalized Mar\v{c}enko-Pastur law.

\item[(c)\ ] {\em Large sample covariance matrices without independence structure in columns.}
  \cite{BZ}  directly considered the general model \eqref{eq:1.1} with
  independent columns, not necessarily identically distributed.  They
  however imposed a common covariance structure on the column vectors,
  namely $\cov(\vv{x}_{nj})\equiv \vv{\Sigma}_p$, $1\le j\le n$. Under appropriate
  moment conditions on the coordinates of the vectors  $\vv{x}_{nj}$,
  \cite{BZ}
  shows that the generalized Mar\v{c}enko-Pastur law still
  holds as in (b).

\item[(d)\ ] {\em Separable or weighted covariance matrices.}
  For data matrix $\vv{X_n}$ with both row and column dependence,
  \red{Zhang (2006)}\cite{ZL06} studied the
  separable covariance matrices of the form
  \[
  \vv{S}_n=\frac{1}{n} \vv{X}_n \vv{X}_n^*= \frac1n \vv{A}_p^{1/2}\vv{Z}_n\vv{B}_n\vv{Z}_n^*\vv{A}^{1/2}_p, \quad
  \text{with} \quad \vv{X}_n=\vv{A}_p^{1/2}\vv{Z}_n\vv{B}_n^{1/2},
  \]
  where $\vv{A}_p$ and $\vv{B}_n$ are two non-negative definite
  symmetric matrices and
  $\vv{Z}_n=\{\vv{z}_1,\ldots,\vv{z}_n\}=(z_{ij})$ is a $p\times n$
  pure noise matrix with 
  i.i.d. standardized entries.
  Under the weak convergence of both sequences $\mu_{  \vv{A}_p}$ and $\mu_{ \vv{B}_n}$,
  \cite{ZL06} established an LSD for the matrix $\vv{S}_n$. 

  A particular feature here is that under appropriate moment
  conditions on the i.i.d. noise entries $\{z_{ij}\}$, universality
  applies: namely the LSD of the sequence $\vv{S}_n$ is the same as if
  these entries are Gaussian.  It follows that we can assume $B_n$ is
  diagonal, with non-negative diagonal elements $\{ b_{nj}\}_{1\le j\le n} 
$. In
  this form, we have
  \[
  \vv{S}_n= \frac1n \sum_{j=1}^n
  b_{nj}\vv{A}_p^{1/2}\vv{z}_j\vv{z}^*_j\vv{A}_p^{1/2}.
  \]
  This is the so-called {\em weighted sample covariance matrices} with
  weights $w_{nj} = b_{nj}/n$, $1\le j\le n$.  Such weighted sample
  covariance matrix occurs in  \cite{Zh11} 
  in their study on the realized covariance matrix in stock price
  modelling.  Note that this weighted matrix is a special case of the
  model \eqref{eq:1.1} with column vectors $\vv{x}_{nj}=b_{nj}\vv{A}_p^{1/2}\vv{z}_j$.

\item[(e)\ ]
  {\em Time series and random field data matrices.}
  For data from time
  series models, \red{ Jin et al. (2009), Yao (2012) and Jin et
    al. (2014)}\cite{Jin09,Yao12,Jin14} investigated the LSDs of sample
  covariance matrices with data matrix generated by $p$ independent copies of
  $n$ consecutive observations of a scalar linear time series, which
  can be treated by the method in \red{Bai and Zhou(2008)}\cite{BZ} as
  $p$ independent samples with a common population covariance
  matrix. \red{Liu and Paul (2017)}\cite{LP17} extended the framework to
  high-dimensional linear time series models with coefficient matrices
   simultaneously diagonalizable. When innovations are Gaussian, their
  model can be viewed as an extension of \red{ Jin et al. (2009), Yao
    (2012)}\cite{Jin09,Yao12} to $n$ consecutive observations of $p$
  independent linear processes.

  For random field models, \red{Hachem,
    Loubaton and Najim (2006)}\cite{HLN} considered the Gram random
  matrix model with a given variance profile, where the data matrix
  $\vv{X}=(x_{ij})_{p\times n}$ has i.i.d. entries with
  $x_{ij}=\sigma(i/p,j/n) z_{ij}$, where $z_{ij}$ are i.i.d. entries
  with zero mean and unit variance and $\sigma:[0,1]^2\to\mathbb{R}$ is
  the variance profile function. This is also a particular case of the
  model \eqref{eq:1.1}
  with column vectors $\vv{x}_{nj} =\l\{\sigma(i/p,j/n)
  z_{ij}\r\}_{1\le i\le p}  $.

\end{enumerate}

Despite the rich literature above on singular values of various large
data matrices,  
there still exist
several important types of data in finance and economics, of which limiting behaviours of its singular values
remain unknown.

\medskip
\noindent{\em Case I. Multi-dimensional diffusion process with anisotropic co-volatility.} \quad Data in the analysis of a log price process is always modelled by a
multi-dimensional diffusion process, that is, a $p$-dimensional
process satisfying the stochastic differential equation ${\rm
  d}\vv{X}_t=\vv{\mu}_t{\rm d}t+\vv{\Gamma}_t{\rm d}\vv{B}_t$, where
$\vv{\mu}_t$ is a $p$-dimensional drift process, $\vv{\Gamma}_t$ is a
$p\times p$ matrix-valued co-volatility process and
$\vv{B}_t$ is a standard $p$-dimensional Brownian motion
\citep{JP98,And01,Zh11}.
Financial data analysts are interested in the integrated covariance
matrix $\vv{\Sigma}^{ICV}=\int_0^1\vv{\Gamma}_t\vv{\Gamma'}_t{\rm d}t$
and use the realized covariance matrix
$\vv{\Sigma}^{RCV}=\sum_{l=1}^n\Delta\vv{X}_l\Delta\vv{X}'_l$ as an
estimator of $\vv{\Sigma}^{ICV}$, where
$\Delta\vv{X}_l=\vv{X}_{\tau_{l,n}}-\vv{X}_{\tau_{l-1,n}}$ for
$l=1,\ldots,n$ with $\{\tau_{l,n}\}$ the observation times. In large
sample case when dimension $p$ is fixed, $\vv{\Sigma}^{RCV}$ is proved
to be consistent to $\vv{\Sigma}^{ICV}$. (See \red{Jacod and Protter
  (1998)}\cite{JP98} for details.) However, this is no longer true in the high-dimensional case when dimension $p$ grows
proportionally with the observation frequency $n$. Thus, it is important to find the connections
between spectra  of $\vv{\Sigma}^{RCV}$ and $\vv{\Sigma}^{ICV}$ in the
high-dimensional situation.
For a class of
diffusion processes with co-volatility processes having isotropic
time-varying spectra, i.e, $\vv{\Gamma}_t=\gamma_t\vv{\Sigma}$,
\cite{Zh11} derived an LSD for the  realized covariance matrix
$\vv{\Sigma}^{RCV}$.
For more
diffusion processes that have a co-volatility process
with {\em anisotropic}  time-varying spectrum, the limiting behaviours
of $\vv{\Sigma}^{RCV}$ remain unknown.

\medskip
\noindent{\em Case II. Matrix-valued time series.}\quad
Matrix-valued time series models are always used to investigate data collected in a matrix form and have been widely applied in finance and economics \citep{Chen21,Chen20,Chen19}.
For example,
to study the evolution of macroscopic economic indices among
different countries over certain period, \red{Chen et
  al. (2021)}\cite{Chen21} proposed the matrix-valued auto-regressive
model $\vv{X}_t=\vv{A}\vv{X}_{t-1}\vv{B'}+\vv{Z}_{t-1}$, where
$(\vv{X}_t)$ is an $m\times n$ matrix-value process with column indices
standing for the economical indices and row indices for countries,
$\vv{A}$ and $\vv{B}$ being $m\times m$ and $n\times n$ coefficient
matrices respectively, and $(\vv{Z}_t)$ standing for innovations. Such matrix-valued model has been shown to well capture macroscopic evolutionary characters among small economic
bodies. Though estimation of coefficient
matrices has been extensively studied in \red{Chen et
  al. (2021)}\cite{Chen21} in large sample case, the same problem in
high-dimensional settings remains unclear when row and column
dimensions are large.
It is hence expected that the derivation of a LSD for the
singular values of the data
matrix $\vv{X}_t$ can reflect certain effective information about singular value distributions of the 
coefficient matrices  $\vv{A}$ and $ \vv{B}$
in the  high dimensional situation.

\medskip
\noindent{\em Case III. Finite mixture model.}\quad Models based on finite mixture distributions provides a flexible extension of classical statistical models and have been applied in diverse areas such as genetics, signal processing and machine learning \cite{FS06,Li18,McP00}. The observations in a finite mixture model can always be viewed as samples drawn randomly from several populations with different means or covariance matrices with certain proportion. As a special case, a scale mixture model was studied by \cite{Li18}, in which covariance matrices of different populations differ only by a random factor. The dependence among LSDs of the sample covariance matrix and the common population covariance matrix as well as the distribution of scale variable is derived. However, when population covariance matrices more general structures, the existence of LSDs of sample covariance matrices and their dependence on LSDs of population covariance matrices are still unknown.

\medskip
Back to the existing literature recalled  above, we observe that
models (a)-(e) 
models share a common feature:
the column vectors $\vv{x}_{nj}$'s of their data matrices are 
independent, and their covariance matrices $\vv{\Sigma}_{nj}, 1\leq j\leq n$ are
{\em simultaneously diagonalizable}, either directly or asymptotically.
Indeed for models (a)-(e), the $n$ matrices $\vv{\Sigma}_{nj}, 1\leq j\leq n$ are, 
respectively,
\[
\text{(a)~}  \tau_j \vv{I}_p, \quad
\text{(b)~}    \vv{\Sigma}_p, \quad
\text{(c)~}    \vv{\Sigma}_p, \quad
\text{(d)~}  b_{nj} \vv{A}_p, \quad
\text{(e)~ random field models:}   \diag  \l(  \sigma^2(i/p,j/n)   \r)_{1\leq i\leq p}.
\]
In each case, the $n$ matrices are directly simultaneously diagonalizable. 
For
the remaining case of time series models in (e), 
the data matrix consists of $p$ consecutive observations of $n$
independent stationary linear time series. For each time series $j$, the
 population covariance matrix $\vv{\Sigma}_{nj}$ for the  $p$ observations
is a Toeplitz matrix (filled  with  the first $p$ auto-covariances  of the series).
Although these $n$ Toeplitz matrices $\vv{\Sigma}_{nj}$  are
not directly simultaneously diagonalizable,
it is well known
(\cite{G06}) that under certain
summability assumptions on its entries, any Toeplitz matrix
is  asymptotically equivalent to
a circulant matrix. As  circulant matrices share a same system of
eigenvectors, and thus are simultaneously diagonalizable, we see that 
the Toeplitz matrices $\vv{\Sigma}_{nj}$ 
are asymptotically simultaneously diagonalizable.

These observations
inspire the work in this paper.  We consider a data matrix $\vv{X}_n$
with independent columns  $\vv{x}_{nj}$, and the $n$ population
covariance matrices $\vv{\Sigma}_{nj}=\cov  (\vv{x}_{nj}  )$
can be asymptotically simultaneously diagonalizable, see a
 precise definition of this property  later in
Section~\ref{sec:results}.
Next, a key step is  to model and connect the eigenvalues of $\vv{\Sigma}_{nj}$ 
through two groups of
parameters and a sequence of link functions under some regularization
conditions.  By using the
Stieltejes transform method and under suitable moment conditions,
the LSD for singular values of the data matrix $\vv{X}_n$ is found,
and determined 
through a
system of functional equations involving the limiting distributions of the
two groups of parameters and the limiting link function. The system
captures clearly the connection between the limiting spectrum of the
data matrix and the population spectra.
Note that 
even though our basic assumptions require the independence of 
columns of the data matrix,  our results can  be extended to
certain  data  matrices with both row and column dependence inside,
by using some
universality arguments from  random matrix theory (RMT).
This is done for 
the 
separable covariance covariance model (d)
and for 
sample covariance matrices from  vector-valued linear time series with
coefficient matrices simultaneously diagonalizable, see Section~\ref{ssec:ts}.

In Section~\ref{sec:app}, we apply our general
results to the non-resolved Cases I,II and III mentioned above.
In Section~\ref{ssec:rcv} (case I), 
we establish an LSD for 
the realized covariance matrix  $\vv{\Sigma}^{RCV}$ for 
a multi-dimensional diffusion process with a co-volatility process
that has an anisotropic time-varying spectrum
(under suitable regularity conditions).
In Section~\ref{ssec:mar}, we derive an LSD for the singular values of
the data matrix $\vv{X}_t$ from a  matrix-valued auto-regressive process. In Section~\ref{ssec:mix}, we obtain an LSD of sample covariance matrix for a generalized finite mixture model with population covariance matrices simultaneously diagonalizable.  

The rest of the paper is organized as
follows. Section~\ref{sec:results} introduces the setting and the main
results of the paper.
In Section~\ref{sec:existing}, we show that the existing literature as
recalled in models (a)-(e) is included in our main result, with some
new extensions.
Our results thus 
give a unified approach for these different models of data matrices.
Section~\ref{sec:app} develops applications to three unresolved models,
Cases I, II and III, mentioned above.
Technical proofs are gathered in Appendix. 

\section{Main Results}
\label{sec:results}

In this section, we focus on the sample covariance matrix $\vv{S}_n$ defined in (\ref{eq:1.1}) and the existence and uniqueness of its LSD in the 
high-dimensional setting (\ref{eq:1.3}). The weak convergence of ESDs is established through that of the corresponding Stieltjes transforms. The method of Stieltjes transform has been developed to be a powerful tool 
in study asymptotic spectral properties of random matrices. See \red{Bai and Silverstein (2010)}\cite{BS10} and reference therein. The Stieltjes transform of ESD $\mu_{\vv{S}_n}$ is defined as
\begin{equation}\lb{eq:2.1}
m_n(z)=\int \frac{1}{x-z}{\rm d} \mu_{\vv{S}_n}(x)=\frac{1}{p}{\rm tr}(\vv{S}_n-z\vv{I})^{-1},
\end{equation}
where $z\in\mathbb{C}^+:=\{z\in\mathbb{C}:{\rm Im}(z)>0\}$, the upper half complex plane.
It is well known that a sequence of measure converges vaguely to certain measure if and only the corresponding Stieltjes transform converges to the Stieltjes transform of the limiting measure on the upper complex half plane $\mathbb{C}^+$ pointwisely. Following this routine, it suffices to study the asymptotic behaviour of $m_n(z)$.  

The following additional assumptions on samples $\vv{x}_1,\ldots,\vv{x}_n$ are made.
\begin{assm}\label{assm:1}
\begin{itemize}
\item[$(1)$] For any $i=1,\ldots, p$, $j=1,\ldots,n$, $x_{ij}$ has zero mean, finite fourth moment.
\item[$(2)$] There exists a sequence of $p\times p$ non-negative definite matrices $\{\vv{\Sigma}_i\}$ such that they are uniformly bounded in matrix operator norm $\|\cdot\|_{\rm op}$, i.e.,
\begin{equation}\label{eq:0.4}
\max_{n}\max_{1\leq i\leq n}\|\vv{\Sigma}_i\|_{\rm op}<\infty;
\end{equation}
and diagonalizable simultaneously, i.e., there exists a unitary matrix $\vv{U}_n$ and diagonal matrices $\vv{\Lambda}_1,\ldots,\vv{\Lambda}_n$ with $\vv{\Lambda}_i={\rm diag}\{\lambda_{i,l}:l=1,\ldots,p\}$, $i=1,\ldots,n$ such that
\begin{equation}\label{eq:0.2}
\vv{\Sigma}_i=\vv{U}^*_n\vv{\Lambda}_i\vv{U}_n,~~~~~i=1,2,\ldots,n.
\end{equation}
\item[$(3)$] There exists a bounded continuous functions $f:\mathbb{R}^k\times\mathbb{R}^m\to\mathbb{R}_+=[0,\infty)$ and two sequences $(\vv{a}_l)_{1\leq l\leq p}\in\mathbb{R}^k$ and $(\vv{b}_i)_{1\leq i\leq n}\in\mathbb{R}^m$ such that eigenvalues of $\vv{\Sigma}_1,\ldots,\vv{\Sigma}_n$ satisfy 
\begin{equation}\label{eq:0.3}
\lmd_{i,l}=f(\vv{a}_l;\vv{b}_i),~~~~l=1,\ldots,p; i=1,\ldots,n.
\end{equation}

\item[$(4)$] For any sequence of $p\times p$ matrices $\vv{B}_1,\ldots,\vv{B}_n$ with $\sup_i\|\vv{B}_i\|_{op}<\infty$, it holds that
\begin{equation}\lb{eq:0.1}
\frac{1}{n^3}\sum_{i=1}^n \mathbb{E}|\vv{x^*}_i\vv{B}_i\vv{x}_i-{\rm tr}(\vv{B}_i\vv{\Sigma}_i)|^2=o(1).
\end{equation}

\item[$(5)$] Denote $G_p$ and $H_n$ the ESDs of $\vv{a}_1,\ldots,\vv{a}_p$ and $\vv{b}_1,\ldots,\vv{b}_n$, respectively. As $n,p\to\infty$, $G_p$ and $H_n$ converge weakly to Borel probability measure $G$ and $H$, respectively.

\end{itemize}
\end{assm}
\begin{rem}\label{rem:2.1}
The matrices $\vv{\Sigma}_1,\ldots,\vv{\Sigma}_n$ are always chosen as the corresponding population covariance matrices for $\vv{x}_1,\ldots,\vv{x}_n$, respectively. However, sometimes, although the moment condition (\ref{eq:0.1}) holds for $\vv{\Sigma}_1,\ldots,\vv{\Sigma}_n$, these population covariance matrices are not simultaneously diagonalizable. In this case, if there exist $\vv{\Sigma'}_1,\ldots,\vv{\Sigma'}_n$ such that $\{\vv{\Sigma'}_i\}_{1\leq i\leq n}$ are diagonalizable simultaneously and 
\begin{equation}\label{eq:2.6}
\fr{1}{n^3}\sum_{i=1}^n \left[{\rm tr}(\vv{B}_i(\vv{\Sigma}_i-\vv{\Sigma'}_i))\right]^2=o(1),
\end{equation}
for any sequence of $p\times p$ matrices $\vv{B}_1,\ldots,\vv{B}_n$ with $\sup_i\|\vv{B}_i\|_{op}<\infty$, then we use $\vv{\Sigma'}_i$'s to replace $\vv{\Sigma}_i$'s so that moment condition (4) remains valid for $\vv{\Sigma'}_1,\ldots,\vv{\Sigma'}_n$. One of the sufficient conditions for (\ref{eq:2.6}) is as follows:
\begin{equation}\label{eq:2.7}
\fr{1}{n^2}\sum_{i=1}^n {\rm tr}((\vv{\Sigma}_i-\vv{\Sigma'}_i)(\vv{\Sigma}_i-\vv{\Sigma'}_i)^*)=o(1).
\end{equation}
\end{rem}

With the assumptions above, the main result of this paper 
can be stated as follows.
\begin{thm}\label{thm:0.1}
Suppose that a data matrix $\vv{X}_n=(\vv{x}_1,\ldots,\vv{x}_n)$ of $n$ independent $p$-dimensional samples satisfies Assumption \ref{assm:1}. Then, in the high-dimensional setting $p/n\to c\in(0,\infty)$, with probability $1$, the empirical spectral distribution $\mu_{\vv{S}_n}$ converges weakly to a unique deterministic Borel probability measure $\mu$ with its Stieltjes transform $m(z)$ satisfying
\begin{equation}\label{eq:0.5}
m(z)=-\frac{1}{z}\int_{\mathbb{R}^k}(K(\vv{a},z)+1)^{-1}{\rm d} G(\vv{a})
\end{equation}
in which $K: \mathbb{R}^k\times\mathbb{C}^+\to\mathbb{C}_+:=\{z\in\mathbb{C}:{\rm Im}(z)\geq 0\}$ is the unique solution to the following functional equation
\begin{equation}\label{eq:0.6}
K(\vv{a},z)=\int_{\mathbb{R}^m}\frac{f(\vv{a},\vv{b})}{-z+c\int_{\mathbb{R}^k}\frac{f(\vv{a'},\vv{b})}{K(\vv{a'},z)+1}{\rm d} G(\vv{a'})} {\rm d} H(\vv{b})
\end{equation}
on the subset
$$
\bigg\{(\vv{a},z)\in\mathbb{R}^k\times\mathbb{C}^+:{\rm Im}(K(\vv{a},z))\geq 0,~{\rm Im}(zK(\vv{a},z))\geq0\bigg\}.
$$
in the sense that if $K_1(\vv{a},z)$ and $K_2(\vv{a},z)$ are two solutions to (\ref{eq:0.6}), then for any $z\in\mathbb{C}^+$,
$$
G(\{\vv{a}:K_1(\vv{a},z)\neq K_2(\vv{a},z)\})=0.
$$

\end{thm}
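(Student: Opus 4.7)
The plan is a Stieltjes-transform argument of Bai-Silverstein type, adapted to handle the column-dependent covariance structure through the link function $f$. As a first reduction, since the $\vv{\Sigma}_i$ are simultaneously diagonalizable by a common unitary $\vv{U}_n$, the eigenvalues of $\vv{S}_n$ coincide with those of $\tfrac{1}{n}\sum_j \vv{U}_n\vv{x}_j(\vv{U}_n\vv{x}_j)^*$, so I may assume throughout that $\vv{\Sigma}_j=\vv{\Lambda}_j=\mathrm{diag}(f(\vv{a}_l,\vv{b}_j))_l$. Writing $\vv{R}(z)=(\vv{S}_n-z\vv{I})^{-1}$ and letting $\vv{R}_{(j)}(z)$ denote the leave-one-out resolvent, the resolvent identity together with Sherman-Morrison yields
\[
\vv{I}+z\vv{R}(z)=\sum_{j=1}^n \frac{\tfrac{1}{n}\vv{x}_j\vv{x}_j^*\vv{R}_{(j)}(z)}{1+\tfrac{1}{n}\vv{x}_j^*\vv{R}_{(j)}(z)\vv{x}_j}.
\]

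Next, I would use the trace-concentration hypothesis \eqref{eq:0.1} to replace the quadratic form $\tfrac{1}{n}\vv{x}_j^*\vv{R}_{(j)}\vv{x}_j$ by $T_{nj}(z):=\tfrac{1}{n}\mathrm{tr}(\vv{\Lambda}_j\vv{R}(z))$, and, exploiting the independence of $\vv{x}_j$ and $\vv{R}_{(j)}$, replace $\vv{x}_j\vv{x}_j^*$ by $\vv{\Lambda}_j$ on average. Because every $\vv{\Lambda}_j$ is diagonal, the resulting identity is itself diagonal, and reading off its $(l,l)$ entry gives
\[
R_{ll}(z)\approx -\frac{1}{z\bigl(1+K_n(\vv{a}_l,z)\bigr)}, \qquad K_n(\vv{a}_l,z):=-\frac{1}{nz}\sum_{j=1}^n \frac{f(\vv{a}_l,\vv{b}_j)}{1+T_{nj}(z)}.
\]
Substituting $R_{ll}$ back into the definition of $T_{nj}$ closes the system into a single self-consistent equation for $K_n$. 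Passing to the limit under the weak convergences $G_p\Rightarrow G$ and $H_n\Rightarrow H$, justified by boundedness of $f$ and dominated convergence, identifies any limit point $K$ as a solution of \eqref{eq:0.6}, and averaging $R_{ll}$ against $G_p$ produces \eqref{eq:0.5}.

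To upgrade the heuristic approximations to almost-sure convergence, I would apply the martingale decomposition $m_n(z)-\mathbb{E}m_n(z)=\sum_j(\mathbb{E}_j-\mathbb{E}_{j-1})m_n(z)$, control each increment via the rank-one bound $|\mathrm{tr}(\vv{R}-\vv{R}_{(j)})|\le|\mathrm{Im}\,z|^{-1}$, invoke Burkholder's inequality together with the fourth-moment assumption (1) and the trace bound \eqref{eq:0.1}, and finally promote convergence on a countable dense subset of $\mathbb{C}^+$ to convergence throughout $\mathbb{C}^+$ via Vitali's theorem. The sign constraints $\mathrm{Im}\,K\ge 0$ and $\mathrm{Im}(zK)\ge 0$ drop out naturally from the construction of $K_n$ and, by the standard Stieltjes-transform criteria $\mathrm{Im}\,m(z)\ge 0$ and $\lim_{y\to\infty}-iy\,m(iy)=1$, guarantee that the limit $m(z)$ in \eqref{eq:0.5} is indeed the Stieltjes transform of a probability measure.

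The main obstacle, in my view, is uniqueness of the solution $K$ to \eqref{eq:0.6}. Unlike the scalar Marchenko-Pastur fixed-point equation, \eqref{eq:0.6} is a functional equation in $\vv{a}$, living in an infinite-dimensional space of bounded measurable kernels, and the claimed uniqueness is only $G$-almost everywhere. My plan is to view the right-hand side of \eqref{eq:0.6} as an operator $\mathcal{T}_z$ on the cone $\{K:\mathrm{Im}\,K\ge 0,\,\mathrm{Im}(zK)\ge 0\}$ equipped with the $L^2(G)$ norm, show that the sign constraints force any solution to lie in a suitable bounded set, and then exhibit $\mathcal{T}_z$ as a strict contraction on this set for $|\mathrm{Im}\,z|\ge y_0$ large; uniqueness on the remainder of $\mathbb{C}^+$ then follows by analytic continuation of $K(\vv{a},\cdot)$ for $G$-a.e. $\vv{a}$, using that any two solutions, being Stieltjes-like functions, coincide once they agree on a set with accumulation point in $\mathbb{C}^+$.
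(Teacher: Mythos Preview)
Your outline is essentially the paper's proof: leave-one-out resolvent expansion, replacement of quadratic forms by traces via Assumption~\ref{assm:1}(4), martingale decomposition plus Burkholder for the almost-sure part, and a contraction estimate in $L^2(G)$ for uniqueness when $\mathrm{Im}\,z$ is large followed by analytic continuation (the paper's step~(i) does exactly this, arriving at the inequality
\[
\int|\hat K_1-\hat K_2|^2\,\d G \le c^2\frac{|z|^4}{(\mathrm{Im}\,z)^8}\Bigl(\int J\,\d G\Bigr)^2\int|\hat K_1-\hat K_2|^2\,\d G
\]
with $J(\vv a)=\int f(\vv a,\vv b)^2\,\d H$). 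One imprecision to correct: the sentence ``the resulting identity is itself diagonal, and reading off its $(l,l)$ entry gives $R_{ll}(z)\approx\ldots$'' is not justified under the hypotheses. After replacing $\vv x_j\vv x_j^*$ by $\vv\Lambda_j$ the right-hand side still contains $\vv\Lambda_j\vv R_{(j)}$, which is not diagonal, and in any case Assumption~\ref{assm:1}(4) only controls quantities of the form $\mathrm{tr}(B\vv\Sigma_j)$, so individual diagonal entries $R_{ll}$ need not concentrate. The paper avoids this by introducing the (random, but diagonal in the common eigenbasis) matrix $K=\tfrac1n\sum_i\vv\Sigma_i/(1+\vv r_i^*\mathcal S_{i,n}^{-1}\vv r_i)$ and comparing $\tfrac1p\mathrm{tr}\,\mathcal S_n^{-1}$ with $\tfrac1p\mathrm{tr}(K-zI)^{-1}$ and, more generally, $\tfrac1p\mathrm{tr}(\mathcal S_n^{-1}\vv\Sigma_i)$ with $\tfrac1p\mathrm{tr}((K-zI)^{-1}\vv\Sigma_i)$; your $K_n(\vv a_l,z)$ and $T_{nj}$ then emerge from these weighted traces rather than from entrywise identities. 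With that adjustment the two arguments coincide.
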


\begin{cor}\label{cor:0.3}
Suppose that a data matrix $\vv{X}=(\vv{x}_1,\ldots,\vv{x}_n)$ of $n$ independent $p$-dimensional samples satisfies Assumption \ref{assm:1}. In addition, assume that in Assumption \ref{assm:1} (3), there exist two continuous function $g:\mathbb{R}^k\to\mathbb{R}_+$ and $h:\mathbb{R}^m\to\mathbb{R}_+$ such that $f(\vv{a},\vv{b})=g(\vv{a}) h(\vv{b})$. Then, in the high-dimensional setting $p/n\to c\in(0,\infty)$, with probability $1$, the empirical spectral distribution $\mu_{\vv{S}_n}$ converges weakly to a unique deterministic Borel probability measure $\mu$  with its Stieltjes transform $m(z)$ satisfying 
\begin{equation}\label{eq:0.7}
m(z)=-\frac{1}{z}\int_{\mathbb{R}^k}(g(\vv{a})K(z)+1)^{-1}{\rm d} G(\vv{a})
\end{equation}
in which $K:\mathbb{C}^+\to\mathbb{C}_+$ is the unique solution to the following functional equation
\begin{equation}\label{eq:0.8}
K(z)=\int_{\mathbb{R}^m}\frac{h(\vv{b})}{-z+ch(\vv{b})\int_{\mathbb{R}^k}\frac{g(\vv{a'})}{g(\vv{a'})K(z)+1}{\rm d} G(\vv{a'})} {\rm d} H(\vv{b})
\end{equation}
on the subset
$$
\bigg\{z\in\mathbb{C}^+:{\rm Im}(K(z))\geq 0,~{\rm Im}(zK(z))\geq0\bigg\}.
$$
\end{cor}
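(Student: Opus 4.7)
The plan is to derive the corollary as a direct specialization of Theorem \ref{thm:0.1} under the separable assumption $f(\vv{a},\vv{b})=g(\vv{a})h(\vv{b})$. Under this factorization, the functional equation \eqref{eq:0.6} suggests that the dependence of $K(\vv{a},z)$ on $\vv{a}$ should reduce to a multiplicative factor $g(\vv{a})$. Accordingly, I would begin by substituting $f(\vv{a},\vv{b})=g(\vv{a})h(\vv{b})$ into \eqref{eq:0.6} and then propose the ansatz
\begin{equation*}
K(\vv{a},z)=g(\vv{a})\,\widetilde K(z),
\end{equation*}
where $\widetilde K:\mathbb{C}^+\to\mathbb{C}_+$ is the scalar function appearing in \eqref{eq:0.8}.

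Next, I would verify that this ansatz does solve \eqref{eq:0.6}. Plugging it in, the factor $g(\vv{a})$ pulls out of the numerator on the right-hand side (since $h(\vv{b})$ is the only $\vv{b}$-dependent factor inside the inner integral over $G$, and $g(\vv{a'})$ appears in both numerator and denominator as needed), and dividing both sides by $g(\vv{a})$ at any point where it is nonzero yields exactly the scalar equation \eqref{eq:0.8} for $\widetilde K(z)$. Substitution into \eqref{eq:0.5} then gives \eqref{eq:0.7} directly. I would also check that the ansatz respects the domain constraint in Theorem \ref{thm:0.1}: since $g(\vv{a})\ge 0$, the inequalities $\mathrm{Im}(\widetilde K(z))\ge 0$ and $\mathrm{Im}(z\widetilde K(z))\ge 0$ force the same conditions for $K(\vv{a},z)=g(\vv{a})\widetilde K(z)$ for every $\vv{a}$.

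For existence of $\widetilde K$, I would invoke the existence of $K(\vv{a},z)$ granted by Theorem \ref{thm:0.1} and define $\widetilde K(z):=K(\vv{a}_0,z)/g(\vv{a}_0)$ at any point $\vv{a}_0$ in the support of $G$ with $g(\vv{a}_0)>0$; consistency of this definition across such $\vv{a}_0$ follows from the structure of \eqref{eq:0.6} under the separable $f$. For uniqueness, suppose $\widetilde K_1$ and $\widetilde K_2$ both solve \eqref{eq:0.8}. Then $g(\vv{a})\widetilde K_1(z)$ and $g(\vv{a})\widetilde K_2(z)$ are both solutions to \eqref{eq:0.6}, so by the uniqueness clause of Theorem \ref{thm:0.1} they agree for $G$-almost every $\vv{a}$; as long as $G(\{\vv{a}:g(\vv{a})>0\})>0$ (an implicit nondegeneracy condition since otherwise $\vv{S}_n\equiv 0$), this forces $\widetilde K_1(z)=\widetilde K_2(z)$ for every $z\in\mathbb{C}^+$.

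The main subtlety, and the step I would expect to require the most care, is the uniqueness transfer from the $G$-almost-everywhere uniqueness of $K(\vv{a},z)$ to the pointwise uniqueness of the scalar $\widetilde K(z)$. This is where one must isolate the nondegeneracy of $g$ with respect to $G$; if $g\equiv 0$ on the support of $G$, the sample covariance matrix is trivial and the statement is vacuous, so in all nontrivial cases the argument above closes. Beyond this, the proof is essentially a pure reduction, and no new analytic machinery beyond Theorem \ref{thm:0.1} is required.
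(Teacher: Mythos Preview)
Your proposal is correct and matches the paper's intended derivation: the paper does not give a separate proof of Corollary~\ref{cor:0.3}, treating it as immediate from Theorem~\ref{thm:0.1}, and the factorization ansatz $K(\vv{a},z)=g(\vv{a})\widetilde K(z)$ you use is exactly the device the paper employs later (e.g., in recovering the results of \cite{Zh11} in Section~\ref{ssec:rcv} and the scale mixture case in Section~\ref{ssec:mix}) to collapse the two-variable kernel to a scalar one.
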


To allow varying eigenvalues parametrizing function $f$, we make the following additional assumption and obtain similar conclusions.
\begin{assm}\label{assm:2}
\begin{itemize}
\item[$(3')$] For any $n\geq 1$, there exists a uniformly bounded measurable function $f_n:\mathbb{R}^k\times\mathbb{R}^m\to\mathbb{R}_+$ and two families of vectors $\{\vv{a}_{l,n}\in\mathbb{R}^k:l=1,\ldots,p\}$ and $\{\vv{b}_{i,n}\in\mathbb{R}^m:i=1,\ldots,n\}$ such that eigenvalues of $\vv{\Sigma}_1,\ldots,\vv{\Sigma}_n$ satisfy 
\begin{equation}\label{eq:2.10}
\lambda_{i,l}=f_n(\vv{a}_{l,n};\vv{b}_{i,n}),~~~~l=1,\ldots,p; i=1,\ldots,n.
\end{equation}
Moreover, there exists a bounded continuous functions $f:\R^k\times\R^m\to\R_+$ such that
\begin{equation}\label{eq:2.11}
\lim_{n,p\to\infty} \int_{\mathbb{R}^k}\int_{\mathbb{R}^m} |f_n(\vv{a},\vv{b})-f(\vv{a},\vv{b})|H_n({\rm d}\vv{b})G_p({\rm d}\vv{a})=0.
\end{equation}
\end{itemize}
\end{assm}

\begin{thm}\label{thm:0.2}
Suppose that a data matrix $\vv{X}=(\vv{x}_1,\ldots,\vv{x}_n)$ of $n$ independent $p$-dimensional samples satisfies Assumption \ref{assm:1} with (3) replaced by Assumption \ref{assm:2} (3'). Then, in the high-dimensional setting $p/n\to c\in(0,\infty)$, results in Theorem \ref{thm:0.1} is retained.
\end{thm}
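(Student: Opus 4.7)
The plan is to derive Theorem~\ref{thm:0.2} from Theorem~\ref{thm:0.1} via the perturbation device of Remark~\ref{rem:2.1}. I would introduce an auxiliary family of covariance matrices built with the limiting parametrization, namely
\[
\tld{\vv{\Sigma}}_i = \vv{U}_n^* \tld{\vv{\Lambda}}_i \vv{U}_n, \qquad \tld{\vv{\Lambda}}_i = \diag\bigl(f(\vv{a}_{l,n},\vv{b}_{i,n})\bigr)_{1\leq l\leq p},
\]
for $i=1,\ldots,n$. These matrices are simultaneously diagonalized by the same unitary $\vv{U}_n$, uniformly bounded (since $f$ is bounded), and their eigenvalues follow the clean parametrization \eqref{eq:0.3} with the fixed continuous limit function $f$. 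Hence they satisfy items (2), (3), (5) of Assumption~\ref{assm:1} with the same limits $G$ and $H$, and Theorem~\ref{thm:0.1} would apply to the system $(\vv{x}_i, \tld{\vv{\Sigma}}_i)$ provided the quadratic-form condition (4) is still valid for the replacement matrices.

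The crux is to verify the sufficient condition \eqref{eq:2.7} of Remark~\ref{rem:2.1}. Since $\vv{\Sigma}_i$ and $\tld{\vv{\Sigma}}_i$ are simultaneously diagonalized by $\vv{U}_n$, their Hilbert--Schmidt difference reduces to an eigenvalue difference:
\[
\text{tr}\bigl((\vv{\Sigma}_i-\tld{\vv{\Sigma}}_i)(\vv{\Sigma}_i-\tld{\vv{\Sigma}}_i)^*\bigr) = \sum_{l=1}^p \bigl|f_n(\vv{a}_{l,n},\vv{b}_{i,n}) - f(\vv{a}_{l,n},\vv{b}_{i,n})\bigr|^2.
\]
Because $f_n$ and $f$ are uniformly bounded, say by $M$, one has $(f_n-f)^2 \leq 2M|f_n-f|$, so
\[
\frac{1}{n^2}\sum_{i=1}^n \text{tr}\bigl((\vv{\Sigma}_i-\tld{\vv{\Sigma}}_i)^2\bigr) \leq 2M\,\frac{p}{n}\int_{\R^k}\!\int_{\R^m}|f_n(\vv{a},\vv{b}) - f(\vv{a},\vv{b})|\,H_n(d\vv{b})\,G_p(d\vv{a}).
\]
The prefactor $p/n$ stays bounded, and the double integral vanishes as $n,p\to\infty$ by \eqref{eq:2.11}. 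This yields \eqref{eq:2.7}, which in turn gives \eqref{eq:2.6} via Cauchy--Schwarz together with the uniform bound on $\|\vv{B}_i\|_{\text{op}}$, so by Remark~\ref{rem:2.1} the moment condition \eqref{eq:0.1} transfers from $\vv{\Sigma}_i$ to $\tld{\vv{\Sigma}}_i$.

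At this point every hypothesis of Theorem~\ref{thm:0.1} is met by the auxiliary system $(\vv{x}_i, \tld{\vv{\Sigma}}_i)$, so Theorem~\ref{thm:0.1} delivers the LSD of $\vv{S}_n$ characterized by \eqref{eq:0.5}--\eqref{eq:0.6} with the limit function $f$, which is exactly the claim of Theorem~\ref{thm:0.2}. The main obstacle I expect is conceptual rather than technical: one must justify replacing the genuine covariance $\vv{\Sigma}_i$ of $\vv{x}_i$ with the surrogate $\tld{\vv{\Sigma}}_i$ in the quadratic-form concentration (4). This is precisely what Remark~\ref{rem:2.1} provides, and the Hilbert--Schmidt bound above is the crucial input; the reduction from $L^2$ to $L^1$ afforded by the uniform boundedness of $f_n,f$ is what makes the weak hypothesis \eqref{eq:2.11} strong enough. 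A minor loose end is that Assumption~\ref{assm:1}(5) is originally phrased for fixed sequences; under Assumption~\ref{assm:2}(3') it must be read for the triangular arrays $\{\vv{a}_{l,n}\}$ and $\{\vv{b}_{i,n}\}$, but this array extension is transparent since only weak convergence of the empirical measures $G_p$ and $H_n$ is used throughout.
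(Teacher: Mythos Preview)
Your approach is correct and genuinely more modular than the paper's. Both arguments introduce the same auxiliary matrices $\tld{\vv{\Sigma}}_i$ with eigenvalues $f(\vv{a}_{l,n},\vv{b}_{i,n})$, but the paper does not transfer the moment condition and invoke Theorem~\ref{thm:0.1} as a black box. Instead it works inside the proof of Theorem~\ref{thm:0.1}: it forms the random matrix $\tld{K}=\frac{1}{n}\sum_i \tld{\vv{\Sigma}}_i/(1+\vv{r}_i^*\mathcal{S}_{i,n}^{-1}\vv{r}_i)$ and proves directly that
\[
\E\Bigl|\tfrac{1}{p}\,\mathrm{tr}\,(K-z\vv{I})^{-1}-\tfrac{1}{p}\,\mathrm{tr}\,(\tld{K}-z\vv{I})^{-1}\Bigr|=o(1)
\]
via the resolvent identity, bounding the difference by $\frac{|z|}{np\,{\rm Im}(z)^3}\sum_{i,l}|f_n(\vv{a}_l,\vv{b}_i)-f(\vv{a}_l,\vv{b}_i)|$; the remaining steps of the earlier proof then run with $\tld{K}$ in place of $K$. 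Both routes hinge on the same $L^1$ estimate \eqref{eq:2.11}. Yours is cleaner in that Theorem~\ref{thm:0.1} is treated as a finished tool and the new work is confined to verifying \eqref{eq:2.7}, while the paper's route is shorter on paper but asks the reader to re-enter the resolvent machinery of the earlier proof.
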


\section{Relation to the existing works}
\label{sec:existing}

In this section, we show that models (a)-(e) in Introduction are special cases of our model.

\subsection{Sample covariance matrices for i.i.d. samples}

Let $\vv{x}_1,\ldots,\vv{x}_n$ be an i.i.d. sample from the population $\vv{x}=\vv{\Sgm}^{1/2}\vv{z}$, where
\begin{itemize}
\item[(1)] $\vv{z}$ is a $p$-dimensional random vector and has i.i.d. entries with zero mean, unit variance and finite fourth moment;
\item[(2)] $\vv{\Sgm}$ is a $p\times p$ non-negative definite Hermitian matrix with $\sup_p\|\vv{\Sigma}\|_{\rm op}<\ift$;
\item[(3)] the empirical spectral distribution function $G_p$ of $\vv{\Sgm}$ converges weakly to a Borel probability measure $G$.
\end{itemize} 
To see the above is a special case of our model, let $\vv{\Sgm}_i=\vv{\Sgm}$ for $i=1,\ldots,n$, $\vv{b}=1$ and $\vv{a}_{i,p}=\sgm_{i,p}^2$, where $\{\sgm_{i,p}^2\}$ are eigenvalues of $\vv{\Sgm}_p$. Then, conditions (1), (2), (3) and (5) in Assumption \ref{assm:1} hold with the link function $f(\vv{a},\vv{b})=\vv{a}$. It remains to show the moment condition (4) is valid. To see this, by Lemma B.26 in \red{Bai and Silverstein (2010)}\cite{BS10}, for any $p\times p$ matrix $\vv{B}$,
$$
\mathbb{E}|\vv{x^*}_1\vv{B}\vv{x}_1-{\rm tr}(\vv{B}\vv{\Sigma})|^2\leq C\left(\mathbb{E}|z_{11}|^4{\rm tr}(\vv{B}\vv{\Sigma}^2\vv{B^*})\right)\leq 
C\|\vv{B}\|_{\rm op}^2{\rm tr}(\vv{\Sigma}^2)
$$
so that for $\vv{B}_1,\ldots,\vv{B}_n$ with norm uniformly bounded by $K$,
$$
\sum_{i=1}^n\mathbb{E}|\vv{x^*}_i\vv{B}_i\vv{x}_i-{\rm tr}(\vv{B}_i\vv{\Sigma})|^2\leq nCK{\rm tr}(\vv{\Sigma}^2)=O(n^2)=o(n^3).
$$
Therefore, by Theorem \ref{thm:0.1} and Corollary \ref{cor:0.3}, the LSD of $\vv{S}_n$ exists uniquely. Moreover, its Stieltjes transform satisfies
$$
m(z)=-\frac{1}{z}\int\frac{1}{\lambda K(z)+1}\d G(\lambda),
$$
where
$$
K(z)=\left(-z+c\int \frac{\lambda}{\lambda K(z)+1}{\rm d} G(\lambda)\right)^{-1}.
$$
Note that
$$
1=K(z)\left(-z+c\int \frac{\lambda}{\lambda K(z)+1}{\rm d} G(\lambda)\right)=-zK(z)+c+czm(z)
$$
Thus, $1-c-czm(z)=-zK(z)$ and
$$
m(z)=\int\frac{1}{\lambda (1-c-czm(z))-z}{\rm d} G(x),
$$
which is the celebrated Mar\v{c}enko-Pastur equation.

\subsection{The generalized sample covariance matrices}

Consider the following generalized sample covariance matrix:
$$
\vv{S}_n=\frac{1}{n}\vv{A}_p^{1/2}\vv{Z}_n\vv{B}_n\vv{Z}^*_n\vv{A}_p^{1/2},
$$
with its corresponding data matrix:
$$
\vv{X}=\vv{A}_p^{1/2}\vv{Z}_n\vv{B}_n^{1/2},
$$
where $\vv{A}_p$ and $\vv{B}_n$ are $p\times p$ and $n\times n$ non-negative definite Hermitian matrices, respectively and $\vv{Z}_n=(Z_{ij})_{p\times n}$ is a $p\times n$ random matrix having i.i.d. entries with $\mathbb{E}Z_{ij}=0$, $\mathbb{E}|Z_{ij}|^2<\infty$. The limiting behaviour of 
ESDs of the generalized sample covariance matrix has been studied by \red{Zhang (2006)}. In what follows, we are going to provide a new perspective to this model by using results developed in Section 2.

It should be noticed that when $\vv{B}_n$ is diagonal or entries of $\vv{Z}$ follow standard Gaussian distribution, the data matrix $\vv{X}$ can be viewed as having $n$ independent columns. Indeed, when $\vv{B}_n={\rm diag}\{b_1,\ldots,b_n\}$ with $b_i\geq 0$ for $i=1,\ldots,n$, let $\vv{x}_i$ and $\vv{z}_i$ be the $i$-th column of $\vv{X}$ and $\vv{Z}$, respectively. Then, we have $\vv{x}_i=\sqrt{b_i}\vv{A}_p^{1/2}\vv{z}_i$ for $i=1,\ldots,n$ so that columns of $\vv{X}$ are independent. In this case, $\vv{S}_n=n^{-1}\sum_{i=1}^n c_i^2\vv{A}_p^{1/2}\vv{z}_i\vv{z^*}_i\vv{A}_p^{1/2}$, which is actually the weighted sample covariance matrix. 
For the other case when entries of $\vv{Z}_n$ are Gaussian distributed, let 
$\vv{B}_n=\vv{V}_n\vv{\Lambda}_{\vv{B}}\vv{V^*}_n$, where $\vv{V}_n$ is 
orthogonal and $\vv{\Lambda}_{\vv{B}}$ is diagonal with its diagonal entries eigenvalues of $\vv{B}_n$. Define $\bar{\vv{Z}}_n=\vv{Z}_n\vv{V}_n$. Then, due to the normality of $\vv{Z}_n$, $\bar{\vv{Z}}_n$ still has i.i.d. standard Gaussian entries. Thus, $\vv{S}_n$ has the same limiting behaviour of ESDs as
$$
\bar{\vv{S}_n}=\vv{A}_p^{1/2}\bar{\vv{Z}}_n\vv{\Lambda}_{\vv{B}}\bar{\vv{Z^*}}_n\vv{A}_p^{1/2},
$$
and the latter can be treated as a large sample covariance matrix for $n$ 
independent samples. Although, in general, the data matrix $\vv{X}$ may have both row and column dependence when $\vv{B}_n$ is not diagonal, we show below that the limiting behaviour of $\vv{S}_n$ is the same as that of another generalized sample covariance matrix $\tilde{\vv{S}}_n$ obtained by replacing $\vv{Z}_n$ by $\tilde{Z}_n$ with standard 
Gaussian entries. Therefore, our method still works to study 
the limiting behaviour of $\vv{S}_n$. In other words, the results of LSD of the weighted sample covariance matrices are universal among the generalized sample covariance 
matrices.

Before detailed discussion, we make the following assumptions:
\begin{itemize}
\item[(1)] $\vv{Z}_n=(Z_{ij})_{p\times n}$ is a $p\times n$ random matrix having i.i.d. entries with $\mathbb{E}Z_{ij}=0$, $\mathbb{E}|Z_{ij}|^2=1$ and $\mathbb{E}|Z_{ij}|^4<\infty$;

\item[(2)] $\vv{A}_p$ and $\vv{B}_n$ are uniformly bounded in norm, i.e., $a_0:=\sup_p\|\vv{A}_p\|_{op}<\infty$, $b_0:=\sup_n\|\vv{B}_n\|_{op}<\infty$;

\item[(3)]  ESDs of $\vv{A}_p$ and $\vv{B}_n$ converge weakly two Borel probability measures $G$ and $H$ respectively.
\end{itemize}
\begin{rem}
The only one additional assumption in this paper, compared to those in \cite{ZL06}, is the boundedness condition of $\vv{A}_p$ and $\vv{B}_n$ in (2). By using the truncation technique with the rank inequality, this assumption can be easily removed. However, to remain the consistency 
to our general settings, we are not going remove it in the following discussion.  
\end{rem}

We study the limiting behaviour of the Stieltjes transform $m_n(z)$ of $\vv{S}_n$ by the following routine:
\begin{itemize}
\item[(i)] We show that $m_n(z)-\mathbb{E}m_n(z)\overset{\rm a.s.}{\to}0$ as $n\to\infty$, when entries of $\vv{Z}$ are i.i.d. standardized variables with arbitrary distribution;

\item[(ii)] Let $\tilde{Z}_n=(\tilde{Z}_{ij})_{p\times n}$ be a Gaussian 
matrix with i.i.d. entries having $\mathbb{E}(\tilde{Z}_{ij})=0$, $\mathbb{E}|\tilde{Z}_{ij}|^2=1$. Define
$$
\tilde{S}_n=\frac{1}{n}\vv{A}_p^{1/2}\tilde{Z}_n\vv{B}_n\tilde{Z}_n\vv{A}_p^{1/2}.
$$
The Stieltjes transform of its ESD is denoted by $\tilde{m}_n(z)$. We show that $\mathbb{E}(m_n(z))-\mathbb{E}(\tilde{m}_n(z))\to0$ as $n\to\infty$. 

\item[(iii)] We study the limiting behaviour of $\mathbb{E}(\tilde{m}_n(z))$ by treating $\tilde{S}_n$ as a weighted sample covariance matrix and using the method developed in Section 2.
\end{itemize}
The first two steps are achieved by using the McDiarmid inequality (developed in \cite{McD}) and the Lindeberg principle (developed in \cite{Ch06}). Their proofs are technical and tedious and thus contained in Appendix. In what follows, we only focus on the final step and the alternative sample covariance matrix $\tilde{S}_n$.  

Without loss of generality, we now assume that $\tilde{Z}$ has i.i.d. standard Gaussian entries and $\vv{B}_n={\rm diag}\{b_1,\ldots,b_n\}$ diagonal. Let $\vv{x}_i$ and $\tilde{z}_i$ be the $i$-th column of $\vv{X}$ and $\tilde{Z}$, respectively. Then, $\vv{x}_i=\sqrt{b_i}\vv{A}_p^{1/2}\tilde{z}_i$ for $i=1,\ldots,n$ so that data matrix $\vv{X}$ has $n$ independent columns. 

Let $\vv{\Sgm}_i=b_i\vv{A}_p$, $i=1,\ldots,n$. Under the uniform boundedness assumption on $\vv{A}_p$ and $\vv{B}_n$, $\vv{\Sgm}_i$'s are also uniformly bounded in norm and (2) in Assumption \ref{assm:1} holds. Meanwhile, $\vv{\Sigma}_i$'s are simultaneously diagonalizable with eigenvalues $b_i\sigma_{l,p}^2$, $l=1,\ldots,p$, where $\sigma_{1,p}^2,\ldots,\sigma_{l,p}^2$ are eigenvalues of $\vv{A}_p$. By letting the link function $f(a,b)=ab$, we see (3) in Assumption \ref{assm:1} holds. Under assumption (3) mentioned before, (5) in Assumption \ref{assm:1} is valid. It remains to show (4) in Assumption \ref{assm:1}. To see this, 
notice that $\vv{x}_i=\sqrt{b_i}\vv{A}_p^{1/2}\tilde{z}_i$, by Lemma B.26 in \red{Bai and Silverstein (2010)}\cite{BS10}, for any $p\times p$ matrix $\vv{R}_i$, there exists a positive constant $C$ independent of $i$ such that
$$
\mathbb{E}|\vv{x^*}_i\vv{R}_i\vv{x}_i-c^2{\rm tr}(\vv{R}_i\vv{\Sigma})|^2\leq Cb_i^2\left(\mathbb{E}|z_{11}|^4{\rm tr}(\vv{R}_i\vv{A}_p^2\vv{R}^*_i)\right)\leq C\|\vv{R}_i\|_{\rm op}^2b_i^2{\rm tr}(\vv{A}_p^2).
$$
So, for $\vv{R}_1,\ldots,\vv{R}_n$ with norm uniformly bounded by $K$,
$$
\sum_{i=1}^n\mathbb{E}|\vv{x^*}_i\vv{R}_i\vv{x}_i-{\rm tr}(\vv{R}_i\vv{\Sigma})|^2\leq nCK\|\vv{B}\|_{op}^2{\rm tr}(\vv{A}_p^2)=O(n^2)=o(n^3),
$$
and condition (4) in Assumption \ref{assm:1} is satisfied. 

By Corollary \ref{cor:0.3}, we know that LSD of $\tilde{S}_n$ exists uniquely with its Stieltjes transform $m(z)$ satisfying
$$
m(z)=-\frac{1}{z}\int_{\mathbb{R}_+}\frac{1}{y K(z)+1}{\rm d} G(y),
$$
where
\begin{align*}
K(z)&=\int_{\mathbb{R}^+}\frac{x}{-z+cx\int_{\mathbb{R}_+}\frac{y}{y K(z)+1}{\rm d} G(y)}{\rm d} H(x).
\end{align*}
When either $H$ or $G$ is a one point mass at $0$, $m(z)=-\frac{1}{z}$ so that the LSD becomes also a one point mass $\delta_0$ at $0$. Otherwise, when neither $H$ nor $G$ is not degenerated to $\delta_0$, we have by $zm(z)\in\mathbb{C}^+$ that ${\rm Im}(K(z))>0$ since 
$$
zm(z)=-\int_{\mathbb{R}_+}\frac{1}{1+yK(z)}{\rm d} H(y).
$$
Meanwhile, if ${\rm Im}(zK(z))=0$, then it must have
$$
{\rm Im}\(\int_{\mathbb{R}_+}\frac{y}{-y zK(z)-z}{\rm d} G(y)\)=0,
$$
which contradicts the assumption that neither $H$ nor $G$ is not degenerated to $\delta_0$. Thus, ${\rm Im}(zK(z))>0$.

Let $p(z)=K(z)$ and
$$
q(z)=-\frac{1}{z}\int \frac{y}{y K(z)+1}{\rm d} H(y).
$$
We see immediately that
\begin{itemize}
\item[(a)] when either $H$ or $G$ is a one-point mass at $0$, then $m(z)=-\frac{1}{z}$ so that the LSD becomes also a one point mass $\delta_0$ 
at $0$;

\item[(b)] when neither $H$ nor $G$ is not degenerated to $\delta_0$, $(m(z),p(z),q(z))$ solves the following system of equations uniquely
\begin{align*}
m(z)&=-\frac{1-c^{-1}}{z}-\frac{c^{-1}}{z}\int \frac{1}{1+q(z)x}{\rm d} 
H(x)\\
m(z)&=-\frac{1}{z}\int \frac{1}{1+p(z)y}{\rm d} G(y)\\
m(z)&=-\frac{1}{z}-p(z)q(z).
\end{align*}
on the set $\{z\in\mathbb{C}^+:{\rm Im}(m(z))>0,{\rm Im}(q(z))>0,{\rm Im}(zp(z))>0\}$.
\end{itemize}
which is consistent with Theorem 4.1.1 in \cite{ZL06}.

\subsection{The Centered Gram random matrix model with a given variance profile}

Consider a $p\times n$ random matrix $\vv{X}_n=(X_{n,ij})_{p\times n}$ with 
its entry
$$
X_{n,ij}=\sigma(i/p,j/n) Z_{ij},
$$where
\begin{itemize}
\item[(1)] $Z_{ij}$ being centred i.i.d. random variables with unit variance and finite fourth-moment; 

\item[(2)] the variance profile $\sigma:[0,1]^2\to\mathbb{R}$ is continuous and bounded.

\end{itemize}
The corresponding sample covariance matrix $\vv{S}_n=\frac{1}{n}\vv{X}_n\vv{X}^*_n$. 

Let $\vv{z}_j=(Z_{ij}:i=1,\ldots,p)$ and $\vv{\Sigma}_j={\rm diag}\{\sigma(i/p,j/n)^2:i=1,\ldots,p)\}$ for $j=1,\ldots,n$.  Denote $\vv{x}_{jn}$ the $j$-th column of $\vv{X}_n$, $j=1,\ldots,n$. Then, $\vv{z}_j$'s 
are independent and so are $\vv{x}_{jn}=\vv{\Sigma}^{1/2}_j\vv{z}_j$'s. Meanwhile, it is clear that condition (3) in Assumption \ref{assm:1} holds with $\vv{a}_{l,p}=i/p$, $\vv{b}_{i,n}=i/n$ and the link function $f(a,b)=\sgm^2(a,b)$.  Moreover, the choice of $\{\vv{a}_{l,p}\}$ 
and $\{\vv{b}_{i,n}\}$ ensures Assumption \ref{assm:1} (5) with limiting distributions $H$ and $G$ uniformly distributed on the interval $(0,1)$. Finally, for the moment condition, by Lemma B.26 in \red{Bai and Silverstein (2010)}\cite{BS10}, for any $p\times p$ matrix $\vv{B}_i$, there exists a positive constant $C$ independent of $i$ such that
$$
\mathbb{E}|\vv{x^*}_{in}\vv{B}_i\vv{x}_{in}-c^2{\rm tr}(\vv{B}_i\vv{\Sigma}_i)|^2\leq C\left(\mathbb{E}|z_{11}|^4{\rm tr}(\vv{B}_i\vv{\Sigma}_i^2\vv{B^*}_i)\right)\leq C\|\vv{B}_i\|_{\rm op}^2 \|\sigma^2\|_{\infty}\cdot n=O(n),
$$
where $\|\sigma^2\|_{\infty}=\sup_{(s,t)\in[0,1]^2}|\sigma^2(s,t)|$. The moment condition (4) in Assumption \ref{assm:1} then follows.

Therefore, by Theorem \ref{thm:0.1}, the Stieltjes transform of the LSD satisfies
$$
m(z)=-\frac{1}{z}\int_{0}^1\frac{1}{K(s,z)+1}{\rm d} s,
$$
where
\begin{align*}
K(s,z)&=\int_{0}^1\frac{\sigma^2(s,t)}{-z+c\int_{0}^1\frac{\sigma^2(s,t)}{K(s,z)+1}{\rm d} s}{\rm d} t.
\end{align*}
The final result is consistent with that discussed in Section 3.1 in \cite{HLN}. 

To relax the requirement of entry independence, we propose the following result.
\bg{thm}[Generalization of the centred Gram model]\label{thm:3.1}
Let $\vv{x}_{in}=\vv{\Sigma}_i^{1/2}\vv{z}_i$, $i=1,2,\ldots,n$, where
\begin{itemize}
\item[(1)]  $\vv{Z}=(\vv{z}_1,\ldots,\vv{z}_n)$ has i.i.d. entries with zero mean, unit variance and finite fourth moment;

\item[(2)] $\vv{\Sigma}_i$'s are a sequence of simultaneously diagonalizable non-negative definite Hermitian matrices;

\item[(3)] there exists a bounded continuous function $\sigma:[0,1]^2\to\mathbb{R}$ such that eigenvalues of $\vv{\Sigma}_i$ are $\{\sigma(l/p,i/n)^2:l=1,\ldots,p\}$ for $i=1,\ldots,n$.

\end{itemize}

Let $\vv{X}_n=(\vv{x}_{1n},\ldots,\vv{x}_{nn})$. Then, the LSD of sample covariance matrix $\vv{S}_n=\frac{1}{n}\vv{X}_n\vv{X}^*_n$ exists uniquely with its Stieltjes transform satisfying
$$
m(z)=-\frac{1}{z}\int_0^1\frac{1}{K(s,z)+1}{\rm d} s
$$
where $K:[0,1]\times\mathbb{C}^+\to\mathbb{C}_+$ is the unique solution to the functional equation
$$
K(s,z)=\int_0^1\frac{\sgm(s,t)^2}{-z+c\int_0^1\fr{\sgm(s,t)^2}{K(s,z)+1}{\rm d} s}{\rm d} t.
$$
\end{thm}
The proof of Theorem \ref{thm:3.1} is a direct application of Theorem \ref{thm:0.1} and thus is omitted.

\subsection{Vector-valued time series}
\label{ssec:ts}

Consider the following $p$-dimensional linear process:
\be\lb{eq:3.4.1}
\vv{X}_t=\sum_{j=0}^\infty \vv{A}_j\vv{Z}_{t-j},
\de
where
\begin{itemize}
\item[$(3.4$-$1)$] $(Z_t)_{t\in\mathbb{Z}}$ is a sequence of i.i.d. $p$-dimensional random vectors, with i.i.d. entries having zero mean, unit variance and finite fourth moments; 
\item[$(3.4$-$2)$] The coefficient matrices $\(\vv{A}_j\)_{j=0,1,\cdots}$ are diagonal, non-random with $\sup_p\|\vv{A}_j\|\leq a_j<\ift$ for $j=0,1,\ldots$ and
\be\lb{eq:3.4.2}
\sum_{j=0}^\ift a_j<\ift,
\de
with $\vv{A}_0=\vv{I}_p$.
\end{itemize}
Let $\vv{X}_t=(X_{1,t},\ldots,X_{p,t})'$ and $\vv{Z}_t=(Z_{1,t},\ldots,Z_{p,t})'$, where $'$ represents the matrix transpose. It is easy to see that coordinate processes $(X_{i,t})_{t\geq0}$'s are independent and for $i=1,\ldots,p$, $(X_{i,t})$ follows a scalar linear process:
\be\lb{eq:3.4.3}
X_{i,t}=\sum_{j=0}^\infty a_{i,j}\vv{Z}_{i,t-j},
\de
with $a_{ij}$ the $i$-th diagonal entry of $\vv{A}_j$. Moreover, under (\ref{eq:3.4.2}), all coordinate processes are stationary. It should be mentioned that models considered in \red{Jin et al. (2009), Yao (2012), Jin et al. (2014) and Liu and Paul (2017)}\cite{Jin09,Yao12,Jin14,LP17} all follow (\ref{eq:3.4.1}) and satisfy $(3.4$-$1)$ and $(3.4$-$2)$.

Let $\vv{X}_1,\ldots,\vv{X}_T$ be $T$ consecutive observations of $(\vv{X}_t)$. Denote the data matrix $\vv{X}=(\vv{X}_1,\ldots,\vv{X}_T)$. Consider the corresponding sample covariance matrix
\begin{equation}\label{eq:3.4.4}
\vv{S}_T=\fr{1}{T}\vv{X}\vv{X^*}.
\end{equation}
Let $\mu_T$ be the ESD of $\vv{S}_T$. To study the limiting behaviour of $F_T$ by our method, we need the following two companion matrices
\begin{equation}\lb{eq:3.4.5}
\underline{S}_T=\fr{1}{T}\vv{X^*}\vv{X}=\fr{1}{T}\sum_{i=1}^p \vv{X}_{(i)}\vv{X^*}_{(i)},~~~~\tilde{S}_T=\frac{1}{p}\sum_{i=1}^p\vv{X}_{(i)}\vv{X^*}_{(i)}.
\end{equation}
where $\vv{X}_{(i)}$ is the $i$-th row of $\vv{X}$ for $i=1,\ldots, p$ and $\vv{X}_{(1)},\ldots,\vv{X}_{(p)}$ are mutually independent. Denote $\underline{\mu}_T$ and $\tilde{\mu}_T$ the ESDs of $\underline{S}_T$ and $\tilde{S}_T$, respectively. 

The connections among Stieltjes transforms of $\mu_T$, $\underline{\mu}_T$ and $\tilde{\mu}_T$ are given as follows. Let $m_T(z)$, $\underline{m}_T(z)$ and $\tilde{m}_T(z)$ are Stieltjes transforms of $\mu_T$, $\underline{\mu}_T$ and $\tilde{\mu}_T$, respectively. On the one hand, since $S_T$ and $\underline{S}_T$ share the same positive eigenvalues, it holds 
$$
\underline{\mu}_T=(1-c_T)\dlt_0+c_T\mu_T,
$$
so that
\begin{equation}\lb{eq:3.4.6}
\underline{m}_T(z)=-\fr{1-c_T}{z}+c_Tm_T(z).
\end{equation}
where $c_T=p/T$. On the other hand, since
$$
c_T\tilde{S}_T=\frac{p}{T}\cdot\frac{1}{p}\sum_{i=1}^p\vv{X}_{(i)}\vv{X^*}_{(i)}=\underline{S}_T.
$$
it then holds that
\begin{equation}\lb{eq:3.4.7}
\underline{m}_T(z)=c_T^{-1}\tilde{m}_T\left(\frac{z}{c_T}\right).
\end{equation}
Hence, we have
\begin{equation}\lb{eq:3.4.8}
m_T(z)=\frac{1-c_T}{c_Tz}+\frac{1}{c_T^2}\tilde{m}_T\left(\frac{z}{c_T}\right).
\end{equation}
Hence, it suffices for us to study the limiting behaviour of $\tilde{m}_T(z)$.

Recall that $\tilde{S}_T$ defined above can be viewed as the sample covariance matrix of $p$ independent samples $\vv{X}_{(1)},\ldots,\vv{X}_{(p)}$, where $\vv{X}_{(i)}$ is generated by $T$ consecutive observations of the $i$-th coordinate process satisfying (\ref{eq:3.4.3}). Let $\vv{\Gamma}_{i,T}=\mathbb{E}(\vv{X}_{(i)}\vv{X^*}_{(i)})=(\gamma_{i}(k-j))_{1\leq k,j\leq T}$, $i=1,\ldots,p$, where
\begin{equation}\label{eq:3.4.9}
\gamma_{i}(h)=\mathbb{E}(X_{i,t}X^*_{i,t+h}),~~~~~h\in\mathbb{Z}:=\{0,\pm1,\pm2,\ldots\},
\end{equation}
is the auto-covariance function of the $i$-th coordinate process $(X_{i,t})$ and is independent of $t$. Being $\vv{\Gamma}_{i,T}$ Toeplitz matrices, $\vv{\Gamma}_{i,T}$'s are usually not diagonalizable simultaneously. To build the connection to results in Section 2, we have to find a proxy of $\vv{\Gamma}_{i,T}$ to ensure the simultaneous diagonalizability. The following approximation lemma is crucial to achieve this goal.
\begin{lem}\label{lem:3.4.1}
Under the assumption that $\{\gamma_{i}(h):h\in\mathbb{Z}\}$ is absolutely summable, i.e., $\sum_{h=-\infty}^\infty |\gamma_i(h)|<\infty$, there 
exists a non-negative definite Hermitian circulant matrix $\vv{C}_{i,T}$ such that
\begin{itemize}
\item[(i)] $\vv{\Gamma}_{i,T}$ and $\vv{C}_{i,T}$ are asymptotically iso-spectral in the sense that
\begin{equation}\label{eq:3.4.10}
\begin{split}
\frac{1}{T} {\rm tr}\left((\vv{\Gamma}_{i,T}-\vv{C}_{i,T})(\vv{\Gamma}_{i,T}-\vv{C}_{i,T})^*\right)&\leq  2\left(\sum_{k=T}^\ift |\gamma_{i}(k)|+|\gamma_{i}(-k)|\right)\\
&+2\sum_{k=0}^{T}\frac{k}{T}(|\gamma_{i}(k)|^2+|\gm_{i}(-k)|^2)=o(1)
\end{split}
\end{equation}
as $T\to\infty$ and $p/T=p(T)/T\to c\in(0,\infty)$;
\item[(ii)] $\vv{C}_{i,T}$ has its eigenvalues $\{2\pi f_i(2\pi l/T):l=0,\ldots,T-1\}$, where $f_i$ is the spectral density function of the $i$-th coordinate process defined by
\begin{equation}\label{eq:3.4.11}
f_i(\lambda)=\frac{1}{2\pi}\sum_{h=-\infty}^{\infty} e^{{\bf i}\lambda h}\gamma_i(h),
\end{equation}
where ${\bf i}$ is the imaginary unit with ${\bf i}^2=-1$.
\end{itemize} 
\end{lem}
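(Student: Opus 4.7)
The natural construction is to take $\vv{C}_{i,T}$ as the circulant matrix generated by the \emph{periodized} autocovariance sequence: set
\[
c_k \;=\; \sum_{j\in\mathbb{Z}}\gamma_i(k-jT),\qquad k=0,1,\ldots,T-1,
\]
and define $(\vv{C}_{i,T})_{j,\ell}=c_{(j-\ell)\bmod T}$. Absolute summability of $\{\gamma_i(h)\}$ makes each $c_k$ well defined, and Hermitian symmetry $\gamma_i(-h)=\overline{\gamma_i(h)}$ forces $\vv{C}_{i,T}$ to be a Hermitian circulant. For item (ii), since any circulant matrix is diagonalized by the DFT with eigenvalues equal to the DFT of its first row, a direct computation using the substitution $h=k-jT$ (which bijects $\{0,\ldots,T-1\}\times\mathbb{Z}$ onto $\mathbb{Z}$) together with $e^{-\mathbf{i}2\pi\ell j}=1$ gives
\[
\lambda_\ell \;=\; \sum_{k=0}^{T-1}c_k e^{-\mathbf{i}2\pi\ell k/T} \;=\; \sum_{h\in\mathbb{Z}}\gamma_i(h)e^{-\mathbf{i}2\pi\ell h/T} \;=\; 2\pi f_i(2\pi\ell/T).
\]
Non-negativity of the spectral density $f_i$ (a classical consequence of Herglotz's theorem for positive-definite sequences) then yields the non-negative definiteness of $\vv{C}_{i,T}$.

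For the Frobenius-norm estimate in (i), the $(j,\ell)$-entry of $\vv{\Gamma}_{i,T}-\vv{C}_{i,T}$ is exactly the aliasing error $-\sum_{j'\neq 0}\gamma_i((j-\ell)-j'T)$, as a short reindexing check shows uniformly for $m=j-\ell\in\{-(T-1),\ldots,T-1\}$. Counting index pairs with common lag $m$, the left-hand side of the desired inequality equals
\[
\sum_{m=-(T-1)}^{T-1}\frac{T-|m|}{T}\,\Big|\sum_{j'\neq 0}\gamma_i(m-j'T)\Big|^2.
\]
I would then isolate the contributions $j'=\pm 1$, which can involve lags as small as $T-|m|$, from the terms with $|j'|\ge 2$, which only touch lags of magnitude at least $T+1$. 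This separation leads to a pure tail part controlled by $\sum_{k\ge T}|\gamma_i(\pm k)|$ (absorbing all $|j'|\ge 2$ contributions and the part of the $j'=\pm 1$ contributions falling beyond lag $T$), and a corner part in which small lags $k\le T$ are paired with the small weight $(T-|m|)/T\approx k/T$; this corner part is of the form $\sum_{k\le T}(k/T)|\gamma_i(\pm k)|^2$.

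The $o(1)$ conclusion is then routine: the tail $\sum_{k\ge T}|\gamma_i(\pm k)|\to 0$ by absolute summability, and $\sum_{k\le T}(k/T)|\gamma_i(\pm k)|^2\to 0$ by dominated convergence since $|\gamma_i(\cdot)|^2$ is itself summable (as $\sup_k|\gamma_i(k)|<\infty$ together with $\sum_k|\gamma_i(k)|<\infty$ implies $\sum_k|\gamma_i(k)|^2<\infty$) and the weights $k/T\to 0$ pointwise. The main obstacle is extracting the stated bound in its precise form: the double sum over $(m,j')$ must be partitioned carefully so that triangle-inequality cross terms land in the corner sum with the correct $k/T$ weighting rather than producing extraneous $\sum_k|\gamma_i(k)|^2$ terms without the decaying weight. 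Once this splitting is handled, the remainder is standard Toeplitz-versus-circulant asymptotic equivalence in the sense of Gray's tutorial \cite{G06}, which the paper already cites.
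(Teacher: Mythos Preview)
The paper does not actually prove this lemma: it simply states that the result ``can be obtained by combination of Lemma 10 and 11 in Section 4.4'' of Gray's monograph \cite{G06}. Your construction via the periodized autocovariance $c_k=\sum_{j\in\mathbb{Z}}\gamma_i(k-jT)$, DFT diagonalization yielding eigenvalues $2\pi f_i(2\pi\ell/T)$, and the aliasing--error decomposition of the Frobenius difference is exactly the content of those lemmas, so your approach coincides with the one the paper invokes; in fact your sketch supplies more detail than the paper itself does.
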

This lemma can be obtained by combination of Lemma 10 and 11 in Section 4.4 of the summary work of \red{Robert M. Gray(2006)}\cite{G06} on properties of Toeplitz matrices.

The assumption (3.4-2) ensures that equations in right-hand side of (\ref{3.4.10}) for all $i=1,\ldots,p$ can be uniformly bounded by a small o term. Then, combining (\ref{eq:3.4.10}) in the approximation lemma and (\ref{eq:2.7}) in Remark \ref{rem:2.1}, we know that the sequence of non-negative definite Hermitian circulant matrices $\vv{C}_{1,T},\ldots,\vv{C}_{p,T}$ is a good replacement of $\vv{\Gamma}_{1,T},\ldots,\vv{\Gamma}_{p,T}$ in the sense that the moment condition (4) in Assumption \ref{assm:1} is retained. Moreover, it is well-known that circulant matrices are diagonalizable simultaneously. Indeed, for a circulant matrix $C=(C_{k,j}) $ having the form:
$$
C=\left(\begin{array}{cccccc} c_0&c_1&c_2&&\cdots&c_{n-1}\\
c_{n-1}&c_0&c_1&c_2&&\vdots\\
c_{n-2}&c_{n-1}&c_0&c_1&&\vdots\\
\vdots&\vdots&\vdots&\vdots&\ddots&\vdots\\
c_2&c_3&c_4&\cdots&\cdots&c_1\\
c_1&c_{2}&c_3&\cdots&\cdots&c_0
\end{array}\right),
$$
where $\{c_0,\cdots,c_{n-1}\}\in\mathbb{C}$, by \cite{G06} Theorem 7 in Section 3.2, it has eigenvalue
\begin{equation}
\psi_m=\sum_{k=0}^{n-1} c_k e^{-2\pi{\bf i}mk/n}
\end{equation}
and the corresponding eigenvector
\begin{equation}
y^{(m)}=\frac{1}{\sqrt{n}}\left(1,e^{-2\pi{\bf i}k/n},\cdots,e^{-2\pi{\bf i}(n-1)/n}\right)',
\end{equation}
for $m=0,1\ldots,n-1$. Thus, $C$ can be expressed in the form $C=U\Psi U^*$, where $U=(y^{(0)},\ldots,y^{(n-1)})$ and $\Psi={\rm diag}\{\psi_k\}$. Since different circulant matrices share the same set of eigenvectors, they can be simultaneously diagonalizable. 

Therefore, it is natural to choose $\vv{\Sigma}_i=\vv{C}_{i,T}$ for $i=1,2,\ldots,p$. To apply results in Section 2 can be applied to $\tilde{m}_T(z)$ and then $m_T(z)$. 
Hence, in addition to previous assumption $(3.4$-$1)$ and $(3.4$-$2)$, we make the following assumptions.
\bg{assm}\lb{assm:3.4.1}
~~~
\bg{itemize}
\item[$(3.4$-$3)$] Suppose that there exists a sequence of continuous functions $\psi_j:\R^k\to\mathbb{R}$ and a sequence of real numbers $\vv{a}_1,\ldots,\vv{a}_p\in\R^k$ and a unitary matrix $\vv{U}$ such that 
\be\lb{eq:3.4.12}
U^*\vv{A}_jU={\rm diag}(\psi_j(\vv{a}_l):l=1,\ldots,p) 
\de
\item[$(3.4$-$4)$] Denote the ESD of $\vv{a}_1,\ldots,\vv{a}_p$ as $G_p$. Suppose that $G_p$ converges weakly to Borel probability $G$  as $p\to\ift$.

\item[$(3.4$-$5)$] High-dimensional setting:
$$
T\to\infty,~~~~p=p(T)\to\infty,~~~~~\hbox{such that}~\frac{p}{T}\to c\in(0,\infty).
$$

\end{itemize}
\end{assm}

Under assumptions (3.4-1)-(3.4-5), the following result can be obtained as a special case of \cite{LP17}.
\bg{prop}[Special case of \red{Liu and Paul (2017)}\cite{LP17} Theorem 2.1]\lb{thm:3.4.1}
Suppose that the linear process in (\ref{eq:3.4.1}) satisfies assumptions 
$(3.4$-$1)$---$(3.4$-$5)$. Define
\be\lb{eq:3.4.13}
h(\vv{a},\lmd):=\sum_{j=0}^\ift \psi_j(\vv{a})e^{{\bf i}j\lmd},~~~~~\lmd\in[0,2\pi].
\de
Then, with probability one, the empirical spectral distribution $\mu_T$ of $S_T$ converges weakly to a unique non-random probability measure $\mu$ with its Stieltjes transform $m(z)$ satisfying
\begin{equation}\label{eq:3.4.14}
m(z)=\int_{\R^k} \fr{1}{-z+\frac{1}{2\pi}\int_0^{2\pi}\fr{|h(\vv{a},\lmd')|^2}{cK_0(\lambda,z)+1}{\rm d}\lambda'}\d G(\vv{a}).
\end{equation}
in which $K_0:[0,2\pi]\times\mathbb{C}^+\to\mathbb{C}_+=\{z:{\rm Im}(z)\geq 0\}$ is the unique solution to the functional equation
\begin{equation}\label{eq:3.4.15}
K_0(\lambda,z)=\int_{\mathbb{R}^k}\frac{|h(\vv{a},\lambda)|^2}{-z+\frac{1}{2\pi}\int_0^{2\pi}\frac{|h(\vv{a},\lambda')|^2}{cK_0(\lambda',z)+1}{\rm d}\lambda'}{\rm d}G(\vv{a}).
\end{equation}
\end{prop}
\begin{rem}
Note that \red{Liu, Aue and Paul (2017)}\cite{LP17} obtained the existence and uniqueness of LSD for all symmetrized lag-$\tau$ auto-covariance matrices
$$
\vv{C}_\tau=\frac{1}{2T}\sum_{t=1}^T (\vv{X}_t\vv{X}^*_{t+\tau}+\vv{X}_{t+\tau}\vv{X}^*_t)
$$
for $\tau=0,1,\ldots$. Proposition \ref{thm:3.4.1} is only a special case when $\tau=0$.
\end{rem}

\begin{rem}
The main difference between assumptions in Proposition \ref{thm:3.4.1} and those in \red{Liu, Aue and Paul(2017)}\cite{LP17} Theorem 2.1 is Assumption $(3.4$-$2)$. In \red{Liu, Aue and Paul(2017)}\cite{LP17}, the authors assume the dependence of coordinate processes such that
\begin{itemize}
\item[$(3.4$-$2)'$] The coefficient matrices $\(\vv{A}_j\)_{j=0,1,\cdots}$ are diagonalizable simultaneously, non-random with $\sup_p\|\vv{A}_j\|_{op}\leq a_j<\ift$ for $j=0,1,\ldots$ and
\be\lb{eq:3.4.16}
\sum_{j=0}^\ift a_j<\infty,~~~~~\sum_{j=0}^\infty ja_j<\infty,
\de
with $\vv{A}_0=\vv{I}_p$.
\end{itemize}
By using the same universal technique displayed in Section 9 of \red{Liu, 
Aue and Paul(2017)}\cite{LP17}, our method remains valid by replacing $(3.4$-$2)$ by 
$(3.4$-$2)'$. Moreover, for $\tau=0$, the assumption $\sum_{j=0}^\infty a_j<\infty$ is sufficient to obtain (\ref{eq:3.4.14}) and (\ref{eq:3.4.15}), and the assumption $\sum_{j=0}^\infty ja_j<\infty$ thus can be removed.
\end{rem}

We leave the proof of the above proposition in Appendix. We can further relax the linear structure assumption and extend the work to general vector-valued stationary time series.

Consider a $p$-dimensional stationary time series $(\vv{X}_t)$ consisting of $p$ independent stationary time series, i.e., $\vv{X}_t=(X_{1,t},\ldots,X_{p,t})'$, where the coordinate processes $(X_{it})$ are mutually independent for $i=1,\ldots,p$. Let $\vv{X}_1,\ldots,\vv{X}_T$ be $T$ consecutive observations of $(\vv{X}_t)$. Denote the data matrix $\vv{X}=(\vv{X}_1,\ldots,\vv{X}_T)$. Consider the corresponding sample covariance matrix
$$
\vv{S}_T=\fr{1}{T}\vv{X}\vv{X^*}.
$$
Let $\lmd_1,\ldots,\lmd_p$ be eigenvalues of $\vv{S}_T$ and $\mu_T$ the 
ESD of $\vv{S}_T$. Then, we have the following result, with proof presented in Appendix.

\begin{thm}\label{thm:3.4.2}
Suppose that:
\begin{itemize}
\item[(i)] coordinate processes of $(\vv{X}_t)$ have zero mean and finite fourth moments;
\item[(ii)] there exist a sequence of numbers $\{\vv{a}_1,\ldots,\vv{a}_p\}$ in $\mathbb{R}^k$ and a bounded continuous function $f:\mathbb{R}^k\times[0,2\pi]\to[0,\infty)$ such that:
\begin{itemize}
\item[$(1)$] $f$ satisfies the following Lipschitz condition: there exists a positive constant $K>0$ such that for any $\lambda,\lambda'\in[0,2\pi]$, it holds that
\begin{equation}\label{eq:3.4.17}
\sup_{\vv{a}\in\mathbb{R}^k}|f(\vv{a},\lambda)-f(\vv{a},\lambda')|\leq K|\lambda-\lambda'|;
\end{equation}
\item[$(2)$] for each $i=1,2,\ldots,p$, the spectral density function of the $i$-th coordinate process $(X_{i,t})$ is $f(\vv{a}_i,\lambda)$ for 
$\lambda\in[0,2\pi]$;
\item[$(3)$] The empirical distribution of $\{\vv{a}_1,\ldots,\vv{a}_p\}$, denoted as $G_p$, converges weakly to a deterministic Borel probability measure $G$.
\end{itemize}
\end{itemize}
For $i=1,\ldots,p$, let 
\begin{equation}\label{eq:3.4.18}
\gamma_{i}(h):=\int_{0}^{2\pi} e^{{\bf i}h\lambda} f(\vv{a}_i,\lambda){\rm d}\lambda,~~~~~h\in\mathbb{Z},
\end{equation}
be the auto-covariance function of the $i$-th coordinate process $(X_{it})$ and define
\begin{equation}\label{eq:3.4.19}
\sigma^{(i)}_{s,t;s',t'}:=\E\left[(X_sX^*_t-\gamma_i(s-t))(X_{s'}X^*_{t'}-\gamma_i(s'-t'))\right]
\end{equation}
for $s,t,s',t'\in\{1,2,\ldots\}$. 
\begin{itemize}
\item[(iii)] there exists a positive constant $C$ such that for any positive integer $T$ and $T\times T$ matrix $B$ bounded in 
norm, it holds that
\begin{equation}\label{eq:3.4.20}
\sup_{i=1,\ldots,p}\bigg|\sum_{s,t,s',t'=1}^T B_{st}B_{s't'}\sigma^{(i)}_{s,t;s',t'}\bigg|\leq T\cdot C\|B\|_{\rm op}^2.
\end{equation}
\item[(iv)] the high dimensional setting: $p/T\to c\in(0,\infty)$.
\end{itemize}

Then, with probability one, the empirical spectral distribution $\mu_T$ of $\vv{S}_T$ converges weakly to a unique non-random probability measure $\mu$ with its Stieltjes transform $m(z)$ satisfying
\begin{equation}\label{eq:3.4.21}
m(z)=\int_{\R^k} \fr{1}{-z+\int_0^{2\pi}\fr{f(\vv{a},\lmd')}{cK_0(\lambda,z)+1}{\rm d}\lambda'}\d G(\vv{a}).
\end{equation}
in which $K_0:[0,2\pi]\times\mathbb{C}^+\to\mathbb{C}_+=\{z:{\rm Im}(z)\geq 0\}$ is the unique solution to the functional equation
\begin{equation}\label{eq:3.4.22}
K_0(\lambda,z)=2\pi\int_{\mathbb{R}^k}\frac{f(\vv{a},\lambda)}{-z+\int_0^{2\pi}\frac{f(\vv{a},\lambda')}{cK_0(\lambda',z)+1}{\rm d}\lambda'}{\rm 
d}G(\vv{a}).
\end{equation}
\end{thm}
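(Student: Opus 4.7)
The plan is to reduce Theorem~\ref{thm:3.4.2} to Theorem~\ref{thm:0.2} via the standard companion-matrix device. Since the $p$ rows $\vv{X}_{(1)},\ldots,\vv{X}_{(p)}$ of the data matrix $\vv{X}$ are independent (the coordinate processes are), it is natural to study the $T\times T$ matrix $\tilde{\vv{S}}_T=p^{-1}\sum_{i=1}^p \vv{X}_{(i)}\vv{X}_{(i)}^*$, which is a genuine sample covariance matrix of $p$ independent $T$-dimensional samples. Identity~(\ref{eq:3.4.8}) then transfers the LSD of $\tilde{\vv{S}}_T$ back to that of $\vv{S}_T$, so it suffices to find the Stieltjes transform of $\tilde{\vv{S}}_T$.

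The population covariance of $\vv{X}_{(i)}$ is the Toeplitz matrix $\vv{\Gamma}_{i,T}=(\gamma_i(k-j))_{k,j}$, and these matrices are not simultaneously diagonalizable. I would replace each $\vv{\Gamma}_{i,T}$ by the circulant matrix $\vv{C}_{i,T}$ whose eigenvalues are $\{2\pi f(\vv{a}_i,2\pi l/T):l=0,\ldots,T-1\}$ in the common Fourier eigenbasis; all circulants of size $T$ share this basis, so $\{\vv{C}_{i,T}\}_{1\le i\le p}$ is simultaneously diagonalizable. The Lipschitz hypothesis on $f$ together with an integration by parts yields $|\gamma_i(h)|\le K'/|h|$ uniformly in $i$, and a Riemann-sum comparison then gives $\|\vv{\Gamma}_{i,T}-\vv{C}_{i,T}\|_F^{2}=o(T)$ uniformly in $i$. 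This is exactly the sufficient condition~(\ref{eq:2.7}) of Remark~\ref{rem:2.1} (with $p$ playing the role of $n$ here), so the replacement leaves the LSD unchanged.

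With this substitution, Theorem~\ref{thm:0.2} applies to $\tilde{\vv{S}}_T$ under the following dictionary, noting that the roles of the two dimensions are swapped: $\tilde{\vv{S}}_T$ has $p$ samples of dimension $T$, so the limiting aspect ratio is $1/c$; the ``dimension parameters'' become $\vv{b}_{l,T}=2\pi l/T$ with limit the uniform probability on $[0,2\pi]$; the ``sample parameters'' are $\vv{a}_i$ with limit $G$; and the link function is $f_T(\lambda,\vv{a})=2\pi f(\vv{a},\lambda)$, for which~(\ref{eq:2.11}) is trivial since $f_T$ does not depend on $T$. Assumption~\ref{assm:1}(1) is hypothesis~(i); $\|\vv{C}_{i,T}\|_{\rm op}\le 2\pi\|f\|_\infty$ gives~(2); Assumption~\ref{assm:1}(5) follows from hypothesis~(ii)(3) and a Riemann-sum argument for the $2\pi l/T$; and the quadratic-form variance bound~(4) follows from hypothesis~(iii) for $\vv{\Gamma}_{i,T}$ combined with $|\mathrm{tr}(\vv{B}_i(\vv{\Gamma}_{i,T}-\vv{C}_{i,T}))|^{2}\le \|\vv{B}_i\|_F^{2}\,\|\vv{\Gamma}_{i,T}-\vv{C}_{i,T}\|_F^{2}$ from Step~2.

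Denoting the Stieltjes transform of $\tilde{\vv{S}}_T$ by $\tilde m(z)$, the substitution $K_0(\lambda,z):=c^{-1}K(\lambda,z/c)$ rewrites the equation for $K$ produced by Theorem~\ref{thm:0.2} into~(\ref{eq:3.4.22}); then multiplying the relation for $\tilde m$ by $z$, using~(\ref{eq:3.4.22}) and~(\ref{eq:3.4.8}), delivers~(\ref{eq:3.4.21}) after a short algebraic manipulation. The main technical obstacle is the uniform-in-$i$ circulant approximation: Lemma~\ref{lem:3.4.1} as stated requires absolute summability of $\{\gamma_i(h)\}$, which the Lipschitz hypothesis does not guarantee (it only yields $|\gamma_i(h)|\lesssim 1/|h|$). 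Consequently $\vv{C}_{i,T}$ must be constructed directly from its eigenvalues $2\pi f(\vv{a}_i,2\pi l/T)$ rather than by the Ces\`aro wrapping of Lemma~\ref{lem:3.4.1}, and the Frobenius-distance bound must be obtained by a quantitative Riemann-sum argument with constants uniform in $\vv{a}_i$.
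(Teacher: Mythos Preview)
Your overall strategy---pass to the companion matrix $\tilde{\vv{S}}_T$, replace the Toeplitz covariances $\vv{\Gamma}_{i,T}$ by circulants $\vv{C}_{i,T}$, then invoke Theorem~\ref{thm:0.2} and transfer back via~(\ref{eq:3.4.8})---is exactly the paper's route. The one substantive divergence is your treatment of the circulant approximation, and there you have a misconception that creates an unnecessary obstacle.

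You assert that the Lipschitz hypothesis on $f(\vv a,\cdot)$ only yields $|\gamma_i(h)|\lesssim 1/|h|$ (via integration by parts), which is not absolutely summable, and therefore Lemma~\ref{lem:3.4.1} cannot be applied directly. This is incorrect: a Lipschitz function on the circle has \emph{absolutely summable} Fourier coefficients. This is Bernstein's theorem, and the relevant dyadic argument (Parseval on $f(\lambda)-f(\lambda-\delta)$ with $\delta=\pi/2^m$, then Cauchy--Schwarz over dyadic shells) gives the quantitative bounds
\[
\sum_{|h|>2^{m-1}}|\gamma_i(h)|^2 \le C K^2 2^{-2m},\qquad \sum_{|h|>2^{m-1}}|\gamma_i(h)| \le C' K\,2^{-m/2},
\]
with constants depending only on the Lipschitz constant $K$, hence uniform in $i$. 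The paper invokes exactly this (citing Stein--Shakarchi~\cite{Stein07}), obtains uniform absolute summability of $\{\gamma_i(h)\}$, and then applies Lemma~\ref{lem:3.4.1} verbatim to get $\sup_i T^{-1}\|\vv\Gamma_{i,T}-\vv C_{i,T}\|_F^2=o(1)$. Your proposed workaround---building $\vv C_{i,T}$ directly from the eigenvalues and controlling $\|\vv\Gamma_{i,T}-\vv C_{i,T}\|_F$ by a Riemann-sum argument---is therefore unnecessary, and in fact not obviously valid as stated: the integrand $f(\vv a_i,\lambda)e^{ih\lambda}$ has Lipschitz constant of order $|h|$, so the naive Riemann-sum error for the $(k,j)$ entry is $O(|k-j|/T)$, which is not small for $|k-j|$ comparable to $T$.

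In short: drop the last paragraph of your proposal, insert the Bernstein argument to obtain uniform absolute summability, and then Lemma~\ref{lem:3.4.1} applies directly. Everything else in your outline matches the paper.
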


\begin{rem}
According to discussion in Chapter 4 Section 3 \red{E.M. Stein and R. Shakarchi (2007)}\cite{Stein07}, the Lipschitz condition on the link function is only a sufficient condition to ensure the absolute summablities of auto-covariance functions of coordinate processes, which is essential to approximate the corresponding Toeplitz matrices by symmetric circulant matrices like \red{Robert M. Gray (2006)}\cite{G06}. Once other additional conditions on these stationary processes are given to imply the absolute summablities of auto-covariance functions, the Lipschitz condition can then be removed. For example, a scalar linear time series $x_t=\sum_{j=0}^{\infty}a_j z_{t-j}$ has an absolutely summable auto-covariance function if $\sum_{j=0}^\infty |a_j|<\infty$, since its auto-covariance function $\gamma(h)=\mathbb{E}(x_{t+h}x_t^*)=\sum_{j=0}^\infty a_{j+h}a_j^*$ for $h\in\mathbb{Z}=\{0,\pm 1,\pm 2,\ldots\}$, where innovation $(z_t)$ are i.i.d. real- or complex-valued random variables with $\mathbb{E}(z_t)=0$ and $\mathbb{E}|z_t|=1$.
\end{rem}

\section{Applications}
\label{sec:app}

\subsection{The realized sample covariance matrices for diffusion
  processes with time-varying spectrum} 
\label{ssec:rcv}

Suppose that $\vv{X}_t=(\vv{X}_{t,1},\ldots,\vv{X}_{t,p})'$ is a $p$-dimensional diffusion process satisfying
\begin{equation}\label{eq:3.5.1}
{\rm d} \vv{X}_t=\vv{\mu}_t{\rm d} t+\vv{\Gamma}_t{\rm d}\vv{B}_t,~~~~\vv{X}_0=\vv{0},
\end{equation}
where $\vv{\mu}_t=(\mu_{1,t},\ldots,\mu_{p,t})'$ is a $p$-dimensional drift process; $\vv{\Gamma}_t$ is a $p\times p$ matrix-valued instantaneous co-volatility process; and $\vv{B}_t$ is a standard $p$-dimensional Brownian motion. Such multi-dimensional diffusion process is often used to model the log price process in finance. In particular, financial analysts are interested in the integrated covariance (ICV) matrix 
$\Sigma^{ICV}=\int_0^1 \vv{\Gamma}_t\vv{\Gamma'}_t{\rm d}t$, where $'$ stands for the matrix transpose. One estimator of ICV matrix in practice is the realized covariance (RCV) matrix 
\begin{equation}
\vv{\Sigma}_p^{RCV}=\sum_{i=1}^n \Delta\vv{X}_i(\Delta\vv{X}_i)'
\end{equation}
where $\Delta \vv{X}_i=(\vv{X}_{\tau_{i,n},1}-\vv{X}_{\tau_{i-1,n},1},\ldots,\vv{X}_{\tau_{i,n},p}-\vv{X}_{\tau_{i-1,n},p})'$, and $\{\tau_{i,n}\}$ is a sequence of random observation times. In large
sample case, $\vv{\Sigma}^{RCV}$ is consistent to $\vv{\Sigma}^{ICV}$. (See \cite{JP98} for instance.) However, it has been shown in \cite{Zh11} that, in the high-dimensional setting when dimension $p$ grows
proportionally with the observation frequency $n$,
$\vv{\Sigma}^{RCV}$ is not a consistent estimator for
$\vv{\Sigma}^{ICV}$. This motivates us to investigate the relationship
between limiting spectra of $\vv{\Sigma}^{RCV}$ and $\vv{\Sigma}^{ICV}$.

In this section, we focus on a special class of diffusion processes whose co-volatility processes have time-constant eigenvectors and time-varying spectrum. Specifically, for $\vv{X}_t$ satisfying (\ref{eq:3.5.1}), we assume that, almost surely, there exist $p\times p$ orthogonal matrices $U$ and $V$, 
and $p$ continuous eigenvalue processes $\gamma_{1,t},\ldots,\gamma_{p,t}$ such that 
\begin{equation}\label{eq:3.5.12}
\vv{\Gamma}_t=V'{\rm diag}\{\gamma_{l,t}:l=1,\ldots, p\} U.
\end{equation}
In practice, for a multiple log price process, it is reasonable to assume that eigenvectors for its co-volatility process remain unchanged over a 
short period of time. 

It should be noticed that our model covers as a special case the class of processes considered in \red{Zheng et al. (2011)}\cite{Zh11}, in which the co-volatility process has isotropic dynamic spectrum, that is, $\vv{\Gamma}_t=\gamma_t\Lambda$ with $\gamma_t$ a scalar process and $\Lambda$ a $p\times p$ deterministic matrix. Indeed, let the singular value decomposition of $\Lambda$ be $\Lambda=V'{\rm diag}\{\lambda_1,\ldots,\lambda_p\}U$. Define $\gamma_{l,t}=\lambda_l\gamma_t$ for $l=1,\ldots,p$. Then, 
the corresponding $\vv{\Gamma}_t$ can be written in the form of (\ref{eq:3.5.12}). Thus, our model can be viewed as an extension of models in \red{Zheng et al. (2011)}\cite{Zh11} to the anisotropic dynamical spectrum case.

Firstly, consider the case of the drift process $\vv{\mu}_t\equiv\vv{0}$. Note that for $\vv{X}_t$ whose co-volatility process satisfies (\ref{eq:3.5.12}) with the drift part $\vv{\mu}_t\equiv\vv{0}$, after an orthogonal transform $V$, we have
$$
{\rm d}(V\vv{X}_t)=V{\rm d}\vv{X}_t= VU'{\rm diag}\{\gamma_{l,t}:l=1,\ldots,p\}U{\rm d}B_t={\rm diag}\{\gamma_{l,t}:l=1,\ldots,p\}{\rm d} W_t,
$$
where $W_t=UB_t$ is still a standard Brownian motion, and the RCV matrix for $U\vv{X}_t$ satisfies
$$
\sum_{i=1}^n \Delta(V\vv{X})_i(\Delta(V\vv{X}))'_i=\sum_{i=1}^n V\Delta\vv{X}_i(\Delta\vv{X}_i)'V'=V\left[\sum_{i=1}^n \Delta\vv{X}_i(\Delta\vv{X}_i)'\right]V'
$$
and thus has the same spectrum as $\vv{\Sigma}^{RCV}_p$ for $\vv{X}_t$. Therefore, it is equivalent to consider limiting behaviour of ESDs of RCV matrix for $V\vv{X}_t$. For simplicity, assume 
$\vv{\Gamma}_t={\rm diag}\{\gamma_{l,t}:l=1,\ldots,p\}$.

Let $\{\tau_{i,n}\}_{1\leq i\leq n}$ be an increasing sequence of random times. Suppose that $\{\tau_{i,n}\}$ and $\{\gamma_{i,t}\}$ are independent of $\vv{B}_t$ and therefore can be viewed as being non-random.  
\begin{equation}\label{eq:3.5.2}
\Delta \vv{X}_{i}=\int_{\tau_{i-1,n}}^{\tau_{i,n}} \Gamma_t{\rm d}\vv{B}_t\overset{d}{=}\fr{1}{\sqrt{n}}\Delta\vv{\Gamma}_{i}\vv{Z}_i,
\end{equation}
where $\overset{d.}{=}$ stands for "equal in distribution", and
$$
\Delta\vv{\Gamma}_i={\rm diag}\bigg\{\sqrt{n\int_{\tau_{i-1,n}}^{\tau_{i,n}}\gamma_{l,t}^2{\rm d} t}:l=1,\ldots,p\bigg\}
$$ 
and $\vv{Z}_1,\ldots,\vv{Z}_n$ are i.i.d. $p$-dimensional standard normal 
vectors.

Consider the RCV matrix
\begin{equation}\label{eq:3.5.3}
\vv{\Sigma}_p^{RCV}=\sum_{i=1}^n \Delta\vv{X}_i(\Delta\vv{X}_i)'\overset{d}{=}\fr{1}{n}\sum_{i=1}^n\Delta\vv{\Gamma}_{i}\vv{Z}_i\vv{Z'}_i\Delta\vv{\Gamma}_{i}=\frac{1}{n}\sum_{i=1}^n \vv{\Sigma}^{1/2}_i\vv{Z}_i\vv{Z'}_i\vv{\Sigma}^{1/2}_i.
\end{equation}
in which $\vv{\Sigma}_{i}^{1/2}=\Delta\vv{\Gamma}_{i}$, $i=1,\ldots,n$. Obviously, $\vv{\Sigma}_i$'s are simultaneously diagonalizable and have eigenvalues 
$$
w_{i,l}^{n}:=n\int_{\tau_{i-1,n}}^{\tau_{i,n}}\vv{\gamma}_{l,s}^2{\rm d} s
$$
Define $w_n:[0,1]\times[0,1]\to[0,\infty)$ by
\begin{equation}\label{eq:3.5.5}
w_n(s,r)=\sum_{i=1}^n\sum_{l=1}^p w_{i,l}^n I_{\(\fr{l-1}{p},\fr{l}{p}\r]\times\(\fr{i-1}{n},\fr{i}{n}\r]}(s,r).
\end{equation}
Then, $w_{i,l}^n=w_n(i/n,l/p)$ for any $i=1,\ldots,n$ and $l=1,\ldots,p$. By Theorem \ref{thm:0.2}, we have the following limiting theorem about the LSD of RCV matrix.
\begin{thm}[The case when $\vv{\mu}_t\equiv\vv{0}$]\lb{thm:3.5.1}
Suppose that
\begin{itemize}
\item[(i)] $\{w_n(s,r)\}$'s are uniformly bounded by $\kappa\in(0,\ift)$;
\item[(ii)] there exists a continuous bounded function $w:[0,1]\times[0,1]\to[0,\infty)$ such that
\begin{equation}\label{eq:3.5.6}
\lim_{n\to\infty}\int_0^1\int_0^1|w_n(s,r)-w(s,r)|{\rm d} s{\rm d} r=0. 
\end{equation}
\end{itemize}
Then, in the high dimensional setting $p/n\to c\in(0,\ift)$, the LSD of the realized covariance matrix $\vv{\Sgm}^{RCV}_p$ in \rf{eq:3.5.3} exists 
uniquely with its Stieltjes transform $m(z)$ satisfying
\begin{equation}\lb{eq:3.5.7}
m(z)=-\frac{1}{z}\int_0^1\frac{1}{K(s,z)+1}{\rm d} s,
\end{equation}
where $K(s,z):[0,1]\times\mathbb{C}^+\to\mathbb{C}_+$ is a unique solution to the functional equation
\begin{equation}\lb{eq:3.5.8}
K(s,z)=\int_0^1\frac{w(s,r)}{-z+c\int_0^1\frac{w(s,r)}{K(s,z)+1}{\rm d} 
s}{\rm d} r.
\end{equation}

\end{thm}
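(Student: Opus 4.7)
The plan is to cast the realized covariance matrix $\vv{\Sigma}_p^{RCV}$ into the framework of Theorem \ref{thm:0.2} and then invoke that theorem directly. Since $\{\tau_{i,n}\}$ and $\{\gamma_{l,t}\}$ are by assumption independent of the driving Brownian motion, the whole argument is carried out conditionally on them, treating them as deterministic; hypotheses (i) and (ii) are imposed on exactly this conditioning, so the resulting LSD is deterministic.

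First I rewrite \eqref{eq:3.5.3} as $\vv{S}_n = \frac{1}{n}\sum_{i=1}^n \vv{x}_{ni}\vv{x}_{ni}^{*}$ with independent Gaussian columns $\vv{x}_{ni} = \vv{\Sigma}_i^{1/2}\vv{Z}_i$ and $\vv{\Sigma}_i = \Delta\vv{\Gamma}_i^{2}$. Each $\vv{\Sigma}_i$ is diagonal with non-negative entries $w_{i,l}^{n}$, so the family $(\vv{\Sigma}_i)_{1\leq i\leq n}$ is trivially simultaneously diagonalizable with $\vv{U}_n = \vv{I}_p$, and hypothesis (i) yields the uniform operator-norm bound $\|\vv{\Sigma}_i\|_{\rm op}\leq\kappa$. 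Part (1) of Assumption \ref{assm:1} holds because $\vv{x}_{ni}$ is Gaussian, and parts (2) follow at once.

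Next I verify the link function condition, but in the $n$-dependent form of Assumption \ref{assm:2} (3') since $w_n$ itself varies with $n$. Take $k=m=1$, set $\vv{a}_{l,n} = l/p$ and $\vv{b}_{i,n} = i/n$, and put $f_n(a,b) = w_n(a,b)$, $f(a,b) = w(a,b)$. Then $\lambda_{i,l} = w_{i,l}^{n} = f_n(\vv{a}_{l,n},\vv{b}_{i,n})$, so \eqref{eq:2.10} holds, and the limit $f$ is bounded continuous by hypothesis. The empirical measures $G_p$ and $H_n$ of the grids $\{l/p\}$ and $\{i/n\}$ both converge weakly to the uniform law on $[0,1]$, giving part (5). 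Condition \eqref{eq:2.11} reduces to
\[
\frac{1}{pn}\sum_{l=1}^{p}\sum_{i=1}^{n}\bigl|w_n(l/p,i/n) - w(l/p,i/n)\bigr|,
\]
which, by the step-function construction of $w_n$, equals $\int_{0}^{1}\!\!\int_{0}^{1}|w_n(s,r) - w^{\#}(s,r)|\,{\rm d}s\,{\rm d}r$, where $w^{\#}$ is the piecewise-constant discretization of $w$ on the grid; uniform continuity of $w$ gives $\|w^{\#}-w\|_{L^1}\to 0$, and then hypothesis (ii) forces the whole expression to $o(1)$.

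Finally, Lemma B.26 of \cite{BS10}, applied to the Gaussian vector $\vv{x}_{ni}=\vv{\Sigma}_i^{1/2}\vv{Z}_i$, gives for any $\vv{B}_i$ with $\|\vv{B}_i\|_{\rm op}\leq M$,
\[
\E\bigl|\vv{x}_{ni}^{*}\vv{B}_i\vv{x}_{ni} - {\rm tr}(\vv{B}_i\vv{\Sigma}_i)\bigr|^{2} \leq C\,\|\vv{B}_i\|_{\rm op}^{2}\,{\rm tr}(\vv{\Sigma}_i^{2}) \leq C M^{2}\kappa^{2}p,
\]
so $n^{-3}\sum_{i=1}^{n}\E|\cdots|^{2} = O(p/n^{2}) = o(1)$, which is part (4). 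All hypotheses of Theorem \ref{thm:0.2} being verified, the limiting Stieltjes transform \eqref{eq:3.5.7} together with the fixed-point equation \eqref{eq:3.5.8} follows after the cosmetic identification $(\vv{a},\vv{b})\leftrightarrow(s,r)$. The only mildly non-trivial step is the Riemann-sum-to-integral translation just above, which is the main obstacle; everything else is a direct dictionary translation from the diffusion model to the abstract setting of Section \ref{sec:results}.
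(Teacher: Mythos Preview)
Your proof is correct and follows exactly the route the paper intends: the paper states that Theorem \ref{thm:3.5.1} ``can be viewed as a direct application of Theorem \ref{thm:0.2} and thus its proof is omitted,'' and you have simply supplied the verification of Assumptions \ref{assm:1} and \ref{assm:2} that the paper leaves implicit. The choice of parameters $\vv{a}_{l,n}=l/p$, $\vv{b}_{i,n}=i/n$, link functions $f_n=w_n$, $f=w$, and the Riemann-sum argument for \eqref{eq:2.11} are precisely what the authors have in mind.
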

\begin{rem}\label{rem:3.5.1}
The following conditions on $\{\gm_{l,t}:l=1,\ldots,p\}$ are sufficient to ensure that conditions (i) and (ii) in the previous theorem hold:
\begin{itemize}
\item[(1)] the co-volatility process $\vv{\Gamma}_t^{(p)}$ is diagonal and independent of $\vv{B}_t$; moreover, there exists $\kappa_0<\infty$ such that $|\gamma_{l,t}^{(p)}|\leq \kappa_0$ for any $p\geq 1$, $1\leq i\leq p$ and $t\in[0,1]$; in addition, with probability one, there exists a continuous bounded function $\gamma:[0,1]\times[0,1]\to\mathbb{R}$ such that
\be\lb{eq:3.5.10}
\lim_{n\to\infty}\sum_{l=1}^p\int_{\frac{l-1}{p}}^{\frac{l}{p}}\int_0^1|\gamma_{l,r}^{(p)}-\gamma(s,r)|{\rm d} s{\rm d} r=0. 
\end{equation}

\item[(2)] the observation times $\tau_{i,n}$'s are independent of $\vv{B}_t$; moreover, there exists $\kappa_1<\infty$ such that the observation 
durations $\Delta\tau_{i,n}:=\tau_{i,n}-\tau_{i-1,n}$ satisfy
\begin{equation}\label{eq:3.5.11}
\max_{n}\max_{1\leq i\leq n}n\Delta\tau_{i,n}\leq \kappa_1;
\end{equation}
in addition, with probability one, there exists a process $v_s\in C([0,1];[0,\infty))$ such that
$$
\tau_{[ns],n}\to\Theta_s:=\int_0^s v_u{\rm d}u
$$ 
as $n\to\infty$ for all $s\in[0,1]$, where $[x]$ stands for the integer part of $x$.
\end{itemize}
In this case, the function $w(s,r)=\gamma(s,\Theta_r)^2v_r$.
\end{rem}
Theorem \ref{thm:3.5.1} can be viewed as a direct application of Theorem \ref{thm:0.2} and thus its proof is omitted.

In what follows, let us consider the case when $\vv{\mu}_t\neq\vv{0}$. Lemma 1 in Section 3.1 \red{Zheng(2011)}\cite{Zh11} shows that the drift process has no influence on the limiting behaviour of ESDs of RCV matrices once it is assumed to be uniformly bounded, which allows the process to be stochastic, c\'{a}dl\'{a}g and dependent with each other. This observation and Theorem \ref{thm:3.5.1} imply the following result.
\begin{thm}\label{thm:3.5.2}
Assume that for any $p\geq 1$, $(\vv{X}_t^{(p)})$ is a $p$-dimensional process defined in (\ref{eq:3.5.1}), with the corresponding drift process $\vv{\mu}^{(p)}_t=(\mu_{1,t}^{(p)},\ldots,\mu_{p,t}^{(p)})$, and the co-volatility process $\vv{\Gamma}_t^{(p)}={\rm diag}\{\gamma_{l,t}^{(p)}:l=1,\ldots,p\}$. Suppose that conditions (1) and (2) in Remark \ref{rem:3.5.1} are satisfied. In addition, assume that there exists $C_0<\infty$ such that
\begin{equation}\label{eq:3.5.9}
\sup_{p}\max_{1\leq i\leq p}\max_{t\in[0,1]} |\mu_{i,t}^{(p)}|\leq C_0 
\end{equation}
with probability one.

Then, as $p\to\infty$ and $p/n\to c\in(0,\infty)$, the ESD of $\vv{\Sigma}^{RCV}$ converges almost surely to an LSD with its Stieltjes 
transform satisfying (\ref{eq:3.5.7}) and (\ref{eq:3.5.8}) for $w(s,r)=\gamma(s,\Theta_r)^2v_r$.
\end{thm}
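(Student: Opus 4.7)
The plan is to decompose each observation increment $\Delta\vv{X}_i$ into a pure drift piece and a pure martingale piece, express $\vv{\Sigma}^{RCV}_p$ accordingly as a perturbation of the driftless RCV matrix, invoke Theorem \ref{thm:3.5.1} for the driftless piece, and then use a Stieltjes transform stability estimate to show the drift contribution is asymptotically invisible. Explicitly, I would write
$$
\Delta\vv{X}_i=\Delta\vv{Y}_i+\Delta\vv{M}_i,\qquad \Delta\vv{Y}_i=\int_{\tau_{i-1,n}}^{\tau_{i,n}}\vv{\mu}_t^{(p)}\,{\rm d}t,\qquad \Delta\vv{M}_i=\int_{\tau_{i-1,n}}^{\tau_{i,n}}\vv{\Gamma}_t^{(p)}\,{\rm d}\vv{B}_t,
$$
so that $\vv{\Sigma}^{RCV}_p=\tilde{\vv{\Sigma}}^{RCV}_p+\vv{R}_1+\vv{R}_2$, with $\tilde{\vv{\Sigma}}^{RCV}_p=\sum_{i=1}^n\Delta\vv{M}_i(\Delta\vv{M}_i)'$, $\vv{R}_1=\sum_i\Delta\vv{Y}_i(\Delta\vv{Y}_i)'$ and $\vv{R}_2=\sum_i[\Delta\vv{Y}_i(\Delta\vv{M}_i)'+\Delta\vv{M}_i(\Delta\vv{Y}_i)']$. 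Since conditions (1) and (2) of Remark \ref{rem:3.5.1} are in force, Theorem \ref{thm:3.5.1} applies to $\tilde{\vv{\Sigma}}^{RCV}_p$ and yields, almost surely, the weak limit $\mu$ whose Stieltjes transform satisfies \eqref{eq:3.5.7}--\eqref{eq:3.5.8} with $w(s,r)=\gamma(s,\Theta_r)^2 v_r$.

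Next I would bound the perturbation $\vv{R}_1+\vv{R}_2$ in trace norm and plug into the elementary estimate $|m_{\vv{A}}(z)-m_{\vv{B}}(z)|\leq \|\vv{A}-\vv{B}\|_1/(p\,({\rm Im}\,z)^2)$ valid for Hermitian $\vv{A},\vv{B}$. From \eqref{eq:3.5.9} and \eqref{eq:3.5.11} one has $\|\Delta\vv{Y}_i\|^2\leq C_0^2 p(\Delta\tau_{i,n})^2\leq C_0^2\kappa_1^2 p/n^2$, whence
$$
\|\vv{R}_1\|_1={\rm tr}(\vv{R}_1)=\sum_{i=1}^n\|\Delta\vv{Y}_i\|^2\leq C_0^2\kappa_1^2\,p/n,
$$
so $\|\vv{R}_1\|_1/p=O(1/n)\to 0$. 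A Cauchy-Schwarz estimate gives
$$
\|\vv{R}_2\|_1\leq 2\sum_{i=1}^n\|\Delta\vv{Y}_i\|\cdot\|\Delta\vv{M}_i\|\leq 2\sqrt{{\rm tr}(\vv{R}_1)}\sqrt{{\rm tr}(\tilde{\vv{\Sigma}}^{RCV}_p)},
$$
and since ${\rm tr}(\tilde{\vv{\Sigma}}^{RCV}_p)=O(p)$ almost surely (see below), this gives $\|\vv{R}_2\|_1/p=O(n^{-1/2})\to 0$. The Stieltjes transform bound then delivers $|m_{\vv{\Sigma}^{RCV}_p}(z)-m_{\tilde{\vv{\Sigma}}^{RCV}_p}(z)|\to 0$ almost surely for every $z\in\mathbb{C}^+$, so the two ESDs share the same limit, namely $\mu$.

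The main obstacle is securing the almost-sure $O(p)$ bound on ${\rm tr}(\tilde{\vv{\Sigma}}^{RCV}_p)$ (equivalently, on $\sum_i\|\Delta\vv{M}_i\|^2$). The expected value is controlled by Itô's isometry and the uniform bound $|\gamma_{l,t}^{(p)}|\leq \kappa_0$ from Remark \ref{rem:3.5.1}(1), giving $\mathbb{E}\,{\rm tr}(\tilde{\vv{\Sigma}}^{RCV}_p)\leq \kappa_0^2\kappa_1 p$; the upgrade to an almost-sure bound follows from the convergence of $\mu_{\tilde{\vv{\Sigma}}^{RCV}_p}$ to a compactly supported limit established by Theorem \ref{thm:3.5.1}, combined with a standard concentration argument for Gaussian quadratic forms and Borel--Cantelli. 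A cleaner alternative, which is the route indicated in the text preceding the theorem, is to invoke directly Lemma 1 of \cite{Zh11}: under a uniformly bounded (possibly random, càdlàg, and arbitrarily correlated) drift, the LSD of the RCV matrix is unchanged. Combined with Theorem \ref{thm:3.5.1}, this yields the stated conclusion immediately.
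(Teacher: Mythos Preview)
Your approach is sound, and the ``cleaner alternative'' you mention at the end is precisely what the paper does: it quotes Lemma~1 of \cite{Zh11} (restated in the appendix) to dispose of the drift under the uniform bound \eqref{eq:3.5.9}, and then reduces to Theorem~\ref{thm:3.5.1}. Your direct trace-norm perturbation is a legitimate substitute for that lemma, though be aware that your first justification for the almost-sure bound on ${\rm tr}(\tilde{\vv{\Sigma}}^{RCV}_p)$ --- via weak convergence of the ESD to a compactly supported limit --- is not valid as stated, since weak convergence alone does not control first moments; the concentration-plus-Borel-Cantelli route you sketch afterwards is the one that works (and is easy here because, conditional on the volatility and the observation times, the trace is a bounded-weight sum of $np$ independent $\chi^2_1$ variables). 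Incidentally, the cited lemma also requires exactly this same condition $\limsup_p {\rm tr}(\tilde{\vv{\Sigma}}^{RCV}_p)/p<\infty$ a.s.\ as its hypothesis~(iii), so either route needs this ingredient.

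The substantive omission is that you invoke Remark~\ref{rem:3.5.1} as if it were already established. In the paper, the bulk of the proof of Theorem~\ref{thm:3.5.2} is exactly the verification of that remark: showing that conditions (1) and (2) on $\gamma^{(p)}_{l,t}$ and $\{\tau_{i,n}\}$ imply the uniform boundedness of $w_n$ and the $L^1$ convergence \eqref{eq:3.5.6} to $w(s,r)=\gamma(s,\Theta_r)^2 v_r$. The argument passes through the intermediate quantity $\tilde w_{i,l}^n=n\int_{\tau_{i-1,n}}^{\tau_{i,n}}\gamma(l/p,u)^2\,{\rm d}u$, controls $|w_{i,l}^n-\tilde w_{i,l}^n|$ via the uniform bound $|\gamma^{(p)}_{l,t}|\le\kappa_0$ together with \eqref{eq:3.5.10}, and then controls $|\tilde w_{i,l}^n-\gamma(l/p,\Theta_{i/n})^2 v_{i/n}|$ using the continuity of $\gamma$ and the convergence $\tau_{[ns],n}\to\Theta_s$. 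This is the part of the proof you need to supply.
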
 

The proof of this theorem is mainly based on the previous Theorem \ref{thm:3.5.1} and is contained in Appendix.

Recall the case in \red{Zheng et al. (2011)}\cite{Zh11}, where the co-volatility process $\vv{\Gamma}_t^{(p)}=\gamma_t^{(p)}\vv{\Lambda}^{(p)}$ with $\gamma_t^{(p)}$ a bounded scalar process and $\vv{\Lambda}^{(p)}$ a $p\times p$ deterministic matrix.  Let $\vv{\Sigma}^{(p)}=\vv{\Lambda}^{(p)}\vv{\Lambda}^{(p)'}$ and $\vv{\Sigma}^{(p),1/2}$ be the square root of $\vv{\Sigma}^{(p)}$. Without loss of generality, we assume $\vv{\Lambda}^{(p)}=\vv{\Sigma}^{(p),1/2}={\rm diag}\{\lambda_{1,p},\ldots,\lambda_{p,p}\}$ with $0\leq \lambda_{1,p}\leq\cdots\leq\lambda_{p,p}$. Suppose that
\begin{itemize}
\item[(1)] the largest eigenvalues of $\vv{\Sigma}^{(p)}$'s are uniformly bounded in $p$ and the empirical spectral distribution $G_p$ of $\vv{\Sigma}^{(p)}$ converges weakly to a deterministic probability distribution $G$; 

\item[(2)] there exists a bounded continuous function $\gm:[0,1]\to\R$ such that 
$$
\lim_{p\to\infty}\int_0^1 |\gamma_t^{(p)}-\gamma_t|\d t=0.
$$

\item[(3)] condition (2) in Remark \ref{rem:3.5.1} is satisfied. 
\end{itemize}
By \cite{Zh11} Section 2.3 Proposition 5 are satisfied so that the Stieltjes transform of LSD of $\vv{\Sigma}^{RCV}$ satisfies
$$
m(z)=-\frac{1}{z}\int \frac{1}{\lambda M(z)+1}\d G(\lambda)
$$
in which $M(z)$ is the unique solution of the following equation:
$$
M(z)=\int_0^1 \frac{w_s}{-z+c w_s\int \frac{\lambda}{\lambda M(z)+1}\d G(\lambda)}\d s,
$$
where $w_s=\gamma_{\Theta_s}^2v_s$.

To see that our method recovers the same system of equations as in \cite{Zh11}, under assumptions proposed above, the $l$-th eigenvalue $\gamma_{l,t}^{(p)}$ of $\vv{\Gamma}_t^{(p)}$ now becomes $\gamma_{l,t}^{(p)}=\lambda_{l,p}\gamma_{t}^{(p)}$. Let $G_p^{-1}$ and $G^{-1}$ be quantile functions of $G_p$ and $G$ respectively, that is, $G_p^{-1}(s)=\inf\{u|G_p(u)\geq s\}$ and $G^{-1}(s)=\inf\{u|G(u)\geq s\}$ for $s\in(0,1)$. It is well-known that $G_p$ converges weakly to $G$ if and only if $G_p^{-1}$ converges to $G^{-1}$ pointwisely on the set of continuous points of $G^{-1}$. Moreover, we claim that $\lambda_{l,p}^2=G_p^{-1}(l/p)$ for $1\leq l\leq p$. It then follows that
\begin{align*}
\sum_{l=1}^n \int_{\frac{l-1}{n}}^{\frac{l}{n}} |\lambda_l-F_n^{-1}(s)|\d s=0.
\end{align*}

Define $\gamma(s,t)=[G^{-1}(s)]^{1/2}\gamma_t$. Then,
\begin{align*}
\sum_{l=1}^p\int_{\frac{l-1}{p}}^{\frac{l}{p}}\int_0^1|\gamma_{l,r}^{(p)}-\gamma(s,r)|{\rm d} s{\rm d} r&=\sum_{l=1}^p \int_{\frac{l-1}{p}}^{\frac{l}{p}}\int_0^1 |\lambda_{l,p}\gamma_t^{p}-[G^{-1}(s)]^{1/2}\gamma_t|\d s\\
&\leq \left(\sum_{l=1}^p \frac{1}{p}\lambda_{l,p}\right)\cdot \int_0^1 |\gamma_r^{p}-\gamma_r|\d r\\
&+\sum_{l=1}^p \int_{\frac{l-1}{p}}^{\frac{l}{p}} |\lambda_{l,p}-[G^{-1}(s)]^{1/2}|\d s\left(\int_0^1 |\gamma_r|\d r\right)\\
&\leq O(1)\cdot o(1)+ \sum_{l=1}^p \int_{\frac{l-1}{p}}^{\frac{l}{p}} |\lambda_{l,p}-G^{-1}(s)|\d s\cdot O(1).
\end{align*}
Since $\lambda_{l,p}=[G_p^{-1}(l/p)]^{1/2}$ for any $1\leq l\leq p$, $G_p^{-1}$ and $G$ are uniformly bounded and $G_p^{-1}$ converges to $G^{-1}$ almost everywhere on $(0,1]$, we have by the Dominant Convergence Theorem that
\begin{align*}
\sum_{l=1}^p \int_{\frac{l-1}{p}}^{\frac{l}{p}} |\lambda_{l,p}-[G^{-1}(s)]^{1/2}|\d s&=\int_0^1 |[G_p^{-1}(s)]^{1/2}-[G^{-1}(s)]^{1/2}|\d s=o(1).
\end{align*}
Thus, all assumptions in Theorem \ref{thm:3.5.2} are satisfied and we have
$$
m(z)=-\frac{1}{z}\int_0^1\frac{1}{K(s,z)+1}\d s,
$$
in which $K(s,z)$ is the unique solution to the following equation
$$
K(s,z)=\int_0^1 \frac{G^{-1}(s)\gamma^2_{\Theta_t}v_t}{-z+c\gamma^2_{\Theta_t}v_t\int_0^1\frac{G^{-1}(s)}{K(s,z)+1}\d s}\d t.
$$
Observe that $K(s,z)=G^{-1}(s)K(z)$ with $K(z)$ being the unique solution to the equation
\begin{align*}
K(z)&=\int_0^1 \frac{\gamma^2_{\Theta_t}v_t}{-z+c\gamma^2_{\Theta_t}v_t\int_0^1\frac{G^{-1}(s)}{G^{-1}(s)K(z)+1}\d s}\d t\\
&=\int_0^1 \frac{\gamma^2_{\Theta_t}v_t}{-z+c\gamma^2_{\Theta_t}v_t\int \frac{\lambda}{\lambda K(z)+1}\d G(\lambda)}\d t.
\end{align*}
It then follows that
\begin{align*}
m(z)=-\frac{1}{z}\int_0^1\frac{1}{G^{-1}(s)K(z)+1}\d s=-\frac{1}{z}\int \frac{1}{\lambda K(z)+1}\d G(\lambda),
\end{align*}
which coincides with the system of equations obtained in \cite{Zh11}.

\subsection{The matrix-valued auto-regressive model}
\label{ssec:mar}

Consider an $m\times n$ matrix-valued time series $(\vv{X}_t)$ in $\mathbb{R}^{m\times n}$:
\begin{equation}\label{eq:3.6.1}
\vv{X}_t=\vv{A}\vv{X}_{t-1}\vv{B'}+\vv{Z}_t,
\end{equation} 
where 
\begin{itemize}
\item[(1)] $\vv{A}$ and $\vv{B}$ are $m\times m$ and $n\times n$ matrices with $\vv{B}={\rm diag}\{b_1,\ldots,b_n\}$;

\item[(2)] the innovations $\vv{Z}_t=(Z_{ij,t})_{m\times n}$ with $\{Z_{ij,t}:1\leq i\leq m,1\leq n\leq j,t=1,\ldots,T\}$ being i.i.d. random 
variables satisfying $\mathbb{E}(Z_{ij,t})=0$, $\mathbb{E}(|Z_{ij,t}|^2)=0$, $\mathbb{E}(|Z_{ij,t}|^4)<\infty$. 
\end{itemize}
This model is a matrix-valued auto-regressive model with order $1$, first proposed by \red{Chen et al. (2021)}\cite{Chen21} and applied to analysis of macroscopic economic data among countries. The problem of estimation of auto-regressive coefficient matrices $\vv{A}$ and $\vv{B}$ and the model specification testing in the low-dimensional (or, large sample) case has 
been studied comprehensively in \cite{Chen21}. However, inference in the large-dimensional case is not studied yet. In 
this paper, we focus on the relationship among singular value distributions of $\vv{X}_t$ and coefficient matrices $\vv{A}$ and $\vv{B}$ in high dimensional setting.

The following assumptions are made on the model (\ref{eq:3.6.1}). 
\begin{assm}\label{assm:3.6.1}
Suppose that
\begin{itemize}
\item[(1)] $\vv{A}$ is an $m\times m$ symmetric matrix and ${\rm tr}(\vv{A}\vv{A'})=m$;

\item[(2)] $\vv{B}={\rm diag}\{b_1,\ldots,b_n\}$;

\item[(3)] $\sup_{m,n}\|\vv{A}\|_{\rm op}\|\vv{B}\|_{\rm op}<1$, where $\|\cdot\|_{op}$ is the operator norm of matrix.

\item[(4)] Denote empirical spectral distributions of $\vv{A}\vv{A'}$ and $\vv{B}\vv{B'}$ by $G_m$ and $H_n$, respectively. Suppose that $G_m$ and $H_n$ weakly converge to probability measures $G$ and $H$, respectively.

\item[(5)] the innovations $\vv{Z}_t=(Z_{ij,t})_{m\times n}$ with $\{Z_{ij,t}:1\leq i\leq m,1\leq n\leq j,t=1,\ldots,T\}$ being i.i.d. random 
variables satisfying $\mathbb{E}(Z_{ij,t})=0$, $\mathbb{E}(|Z_{ij,t}|^2)=1$, $\mathbb{E}(|Z_{ij},t|^4)<\infty$. 

\item[(6)] High-dimensional setting
$$
n\to\infty,~~~~m=m(n)\to\infty,~~~~\hbox{such that}~\frac{m}{n}\to c\in(0,\infty).
$$
\end{itemize}
\end{assm}
Since $(\vv{A},\vv{B})$ and $(c\vv{A},c^{-1}\vv{B})$ lead to the same model in (\ref{eq:3.6.1}) for $c\neq 0$, restrictions in assumptions (1) and (2) are put to identify the model uniquely up to a sign, and have no influence on spectra of $\vv{A}\vv{A'}$ and $\vv{B}\vv{B'}$. Under these assumptions, we are going to study the LSD of $\vv{S}_t=\frac{1}{n}\vv{X}_t\vv{X'}_t$ in high-dimensional settings for any observation $\vv{X}_t$.

Let $\vv{X}^{(l)}_t$ and $\vv{Z}^{(l)}_t$ be the $l$-th column of $\vv{X}_t$ and $\vv{Z}_t$, respectively for any $l=1,\ldots,n$. Then, since $\vv{B}$ is diagonal, column processes $\vv{X}^{(l)}_t$'s are mutually independent and for any $l=1,\ldots,n$, $(\vv{X}^{(l)}_t)$ follows a vector-valued auto-regressive model with order 1 as below:
\begin{equation}\label{eq:3.6.2}
\vv{X}^{(l)}_t=b_l\vv{A}\vv{X}^{(l)}_{t-1}+\vv{Z}_{t}^{(l)}.
\end{equation}
Since $\sup_{n,m}\|\vv{A}\|\|\vv{B}\|<1$, we have $\sup_m|b_l|\|\vv{A}\|<1$ so that $(\vv{X}^{(l)}_t)$ is stationary and causal. Let $\vv{\Gamma}_{0}^{(l)}=\mathbb{E}(\vv{X}^{(l)}_t\vv{X'}^{(l)}_t)$. Then, by using the moving averaging representation of $(\vv{X}^{(l)}_t)$, we have
\begin{equation}\label{eq:3.6.4}
\vv{\Gamma}_0^{(l)}=\sum_{j=0}^\infty b_i^{2j}(\vv{A}\vv{A'})^j=\sum_{j=0}^\infty b_i^{2j}\vv{A}^{2j}
\end{equation}
for $l=1,\ldots,n$. Since $\vv{A}$ is symmetric, $\vv{\Gamma}_0^{(l)}$'s are simultaneously diagonalizable. Since columns of $\vv{X}_t$ are independent and population covariance matrices of columns are diagonalizable simultaneously, the following result about $\vv{S}_t=\frac{1}{n}\vv{X}_t\vv{X}'_t$ is immediately obtained by using method developed in Section 2. 
\begin{thm}\label{thm:3.6.2}
Suppose that $(\vv{X}_t)$ follows the model (\ref{eq:3.6.1}) and satisfies Assumption \ref{assm:3.6.1}. For any $t\geq0$ fixed, the ESD of $\vv{S}_t=\frac{1}{n}\vv{X}_t\vv{X}'_t$ converges weakly to a deterministic probability measure $F$ independent of $t$, and its Stieltjes transform $m(z)$ satisfying
\begin{equation}\label{eq:3.6.5}
m(z)=-\frac{1}{z}\int \frac{1}{1+K(a,z)}{\rm d}G(a),
\end{equation}
where the kernel function $K:[0,\infty)\times \mathbb{C}^+\to\mathbb{C}_+$ is the unique solution to the following functional equation
\begin{equation}\label{eq:3.6.6}
K(a,z)=\int \frac{1}{(1-ba)\left(-z+c\int \frac{1}{(1-ba)(K(a,z)+1)}{\rm d}G(a)\right)}{\rm d}H(b),
\end{equation}
\end{thm}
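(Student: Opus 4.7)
The plan is to verify each condition of Assumption \ref{assm:1} for the data matrix $\vv{X}_t$ and then invoke Theorem \ref{thm:0.1}. First I would exploit the diagonal structure of $\vv{B}$: the matrix recursion (\ref{eq:3.6.1}) decouples column by column into $n$ scalar-weighted vector AR(1) processes $\vv{X}_t^{(l)} = b_l \vv{A} \vv{X}_{t-1}^{(l)} + \vv{Z}_t^{(l)}$ whose innovations are independent across $l$, so the columns of $\vv{X}_t$ are mutually independent, as required. Since $\sup_{n,m} \|\vv{A}\|_{\rm op} \|\vv{B}\|_{\rm op} < 1$, each column is stationary and causal with MA representation $\vv{X}_t^{(l)} = \sum_{j \geq 0} b_l^j \vv{A}^j \vv{Z}_{t-j}^{(l)}$ and admits the stationary covariance $\vv{\Sigma}_l := \vv{\Gamma}_0^{(l)}$ displayed in (\ref{eq:3.6.4}).

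Second, the symmetry of $\vv{A}$ gives a real spectral decomposition $\vv{A} = \vv{U}^* {\rm diag}(\alpha_1,\ldots,\alpha_m) \vv{U}$. Every $\vv{\Sigma}_l$ is diagonalized by the same $\vv{U}$, with $k$-th eigenvalue $\sum_{j \geq 0} (\alpha_k^2 b_l^2)^j = (1-\alpha_k^2 b_l^2)^{-1}$, uniformly bounded by $(1 - \sup \|\vv{A}\|_{\rm op} \|\vv{B}\|_{\rm op})^{-1}$. This places us in the setting of Assumption \ref{assm:1} with one-dimensional parameter samples $\{\alpha_k^2\}_{k=1}^m$ (the eigenvalues of $\vv{A}\vv{A}'$) and $\{b_l^2\}_{l=1}^n$ (the eigenvalues of $\vv{B}\vv{B}'$), and link function $f(a,b) = (1-ab)^{-1}$, which is continuous and bounded on the effective domain. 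The weak convergence of $G_m$ and $H_n$ supplies condition (5), uniform boundedness yields (2), and conditions (1) and (3) follow directly from the model specification.

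The principal technical obstacle is the concentration condition (4). To verify it for $\vv{x}_l := \vv{X}_t^{(l)}$, I would use the MA representation to write $\vv{x}_l = \vv{C}_l \vv{W}_l$, where $\vv{W}_l$ stacks the i.i.d. centred innovations $\{\vv{Z}_{t-j}^{(l)}\}_{j \geq 0}$ and $\vv{C}_l = (\vv{I}_m,\, b_l \vv{A},\, b_l^2 \vv{A}^2,\, \ldots)$; the tail of the series converges geometrically thanks to $\sup \|\vv{A}\|_{\rm op} \|\vv{B}\|_{\rm op} < 1$, so a standard truncation-at-depth-$N$ argument makes the representation rigorous. Then $\vv{x}_l^* \vv{B}_l \vv{x}_l = \vv{W}_l^* \vv{M}_l \vv{W}_l$ with $\vv{M}_l = \vv{C}_l^* \vv{B}_l \vv{C}_l$ satisfying ${\rm tr}(\vv{M}_l) = {\rm tr}(\vv{B}_l \vv{\Sigma}_l)$, and Lemma B.26 in \cite{BS10} yields $\mathbb{E}|\vv{W}_l^* \vv{M}_l \vv{W}_l - {\rm tr}(\vv{M}_l)|^2 \leq C\,{\rm tr}(\vv{M}_l \vv{M}_l^*) \leq C \|\vv{B}_l\|_{\rm op}^2 {\rm tr}(\vv{\Sigma}_l^2) = O(m)$. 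Summing across $l = 1,\ldots,n$ gives $O(n^2) = o(n^3)$, as required.

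With all conditions verified, Theorem \ref{thm:0.1} directly yields the existence and uniqueness of the LSD, and substituting the link function $f(a,b) = (1-ab)^{-1}$ into (\ref{eq:0.5})--(\ref{eq:0.6}) produces exactly the system (\ref{eq:3.6.5})--(\ref{eq:3.6.6}). The limit being independent of $t$ reflects the stationarity of $(\vv{X}_t)$ guaranteed by the contraction condition $\sup \|\vv{A}\|_{\rm op} \|\vv{B}\|_{\rm op} < 1$.
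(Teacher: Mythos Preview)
Your proposal is correct and follows the same approach as the paper: identify the independent columns via the decoupled AR(1) structure, diagonalize the $\vv{\Gamma}_0^{(l)}$ simultaneously through the spectral decomposition of $\vv{A}$ to obtain eigenvalues $(1-\alpha_k^2 b_l^2)^{-1}$, and then invoke Theorem~\ref{thm:0.1} with link function $f(a,b)=(1-ab)^{-1}$. The paper's proof only sketches these steps and merely states that the moment condition (4) can be verified, whereas you spell out the verification via the MA representation and Lemma~B.26 of \cite{BS10}; your argument there is correct and fills in exactly the detail the paper omits.
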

\begin{proof}
Recall that columns of $\vv{X}_t$ are independent and population covariance matrices of columns are diagonalizable simultaneously. Let $U{\rm diag}\{a_1,\ldots,a_m\}U'$ be the spectral decomposition of $\vv{A}$. Then, $\vv{\Gamma}_0^{(l)}$, defined as (\ref{eq:3.6.4}), has its eigenvalues $(1-b_l^2a_i^2)^{-1}$ for $i=1,\ldots,m$, $l=1,\ldots,n$. After verifying the moment condition (4) in Assumption \ref{assm:1}, conclusions in the theorem is obtained immediately.
\end{proof}

Theorem \ref{thm:3.6.2} captures the dependence of the singular value distribution of $\vv{X}_t$ on that of $\vv{A}$ and $\vv{B}$. The following numerical experiment illustrates our result. We consider a $400\times 600$ matrix-valued autoregressive model $\vv{X}_t$ where the coefficient matrices $\vv{A}$ and $\vv{B}$ are diagonal, whose singular value distributions are taken to be weighted sum of point masses so that $G(\d a)$ and $H(\d b)$ in Theorem \ref{thm:3.6.2} become:
$$
G=0.2\dlt_{0.25}+0.3\dlt_{0.36}+0.5\dlt_{0.49},~~~~H=0.4\dlt_{0.25}+0.4\dlt_{0.64}+0.2\dlt_{1}.
$$ 
The steps of our simulation are as follows:
\begin{itemize}
\item[(i)] Generate $T=15$ consecutive observations of $\vv{X}_t$ with standard Gaussian innovations and coefficient matrices $\vv{A}$ and $\vv{B}$ satisfying conditions above. We use the function VARMAsim in the MTS package in R for generating observations. 
\item[(ii)] Compute the eigenvalues of $\vv{S}_t=\vv{X}_t\vv{X'}_t/n$ for $t=1,5,10,15$ and plot their histograms. 

\item[(iii)] Solve numerically the system of equations (\ref{eq:3.6.5}) and (\ref{eq:3.6.6}) with distributions $G$ and $H$ given above, of which the detailed steps are explained in the last part of this subsection, and obtain the density function $\rho(x)$ of the LSD of $\vv{S}_t$ by using the fact that
$$
\rho(x)=\lim_{y\to0^+}\frac{1}{\pi}{\rm Im}\(m(x+{\bf i}y)\),
$$ 
where $m(z)$ is the Stieltjes transform of the LSD. Then, plot the density curve and compare it with the histograms of empirical eigenvalues.
\end{itemize}
\begin{figure}[ht]
\centering
\includegraphics[scale=0.25, angle=270]{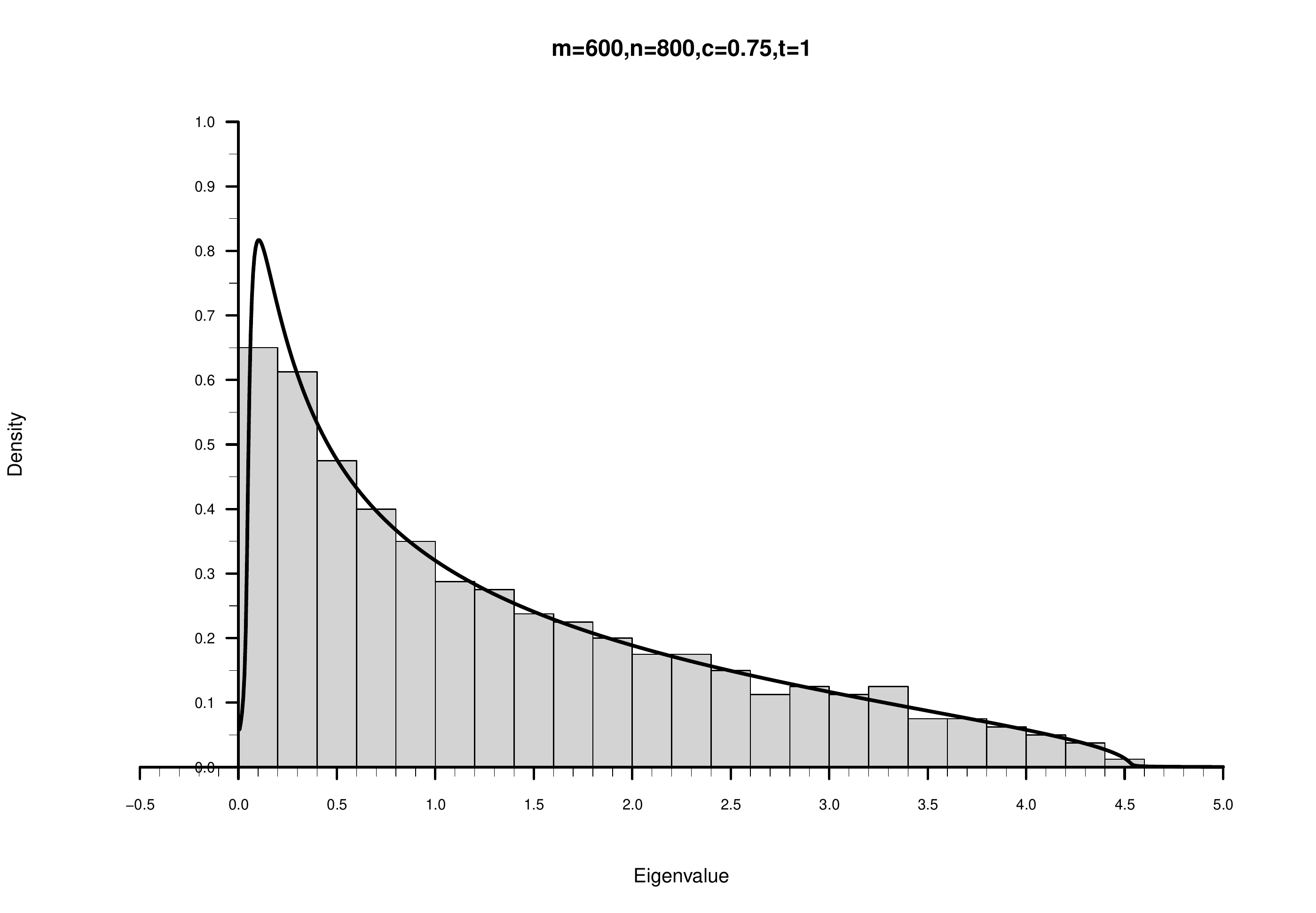}
\includegraphics[scale=0.25, angle=270]{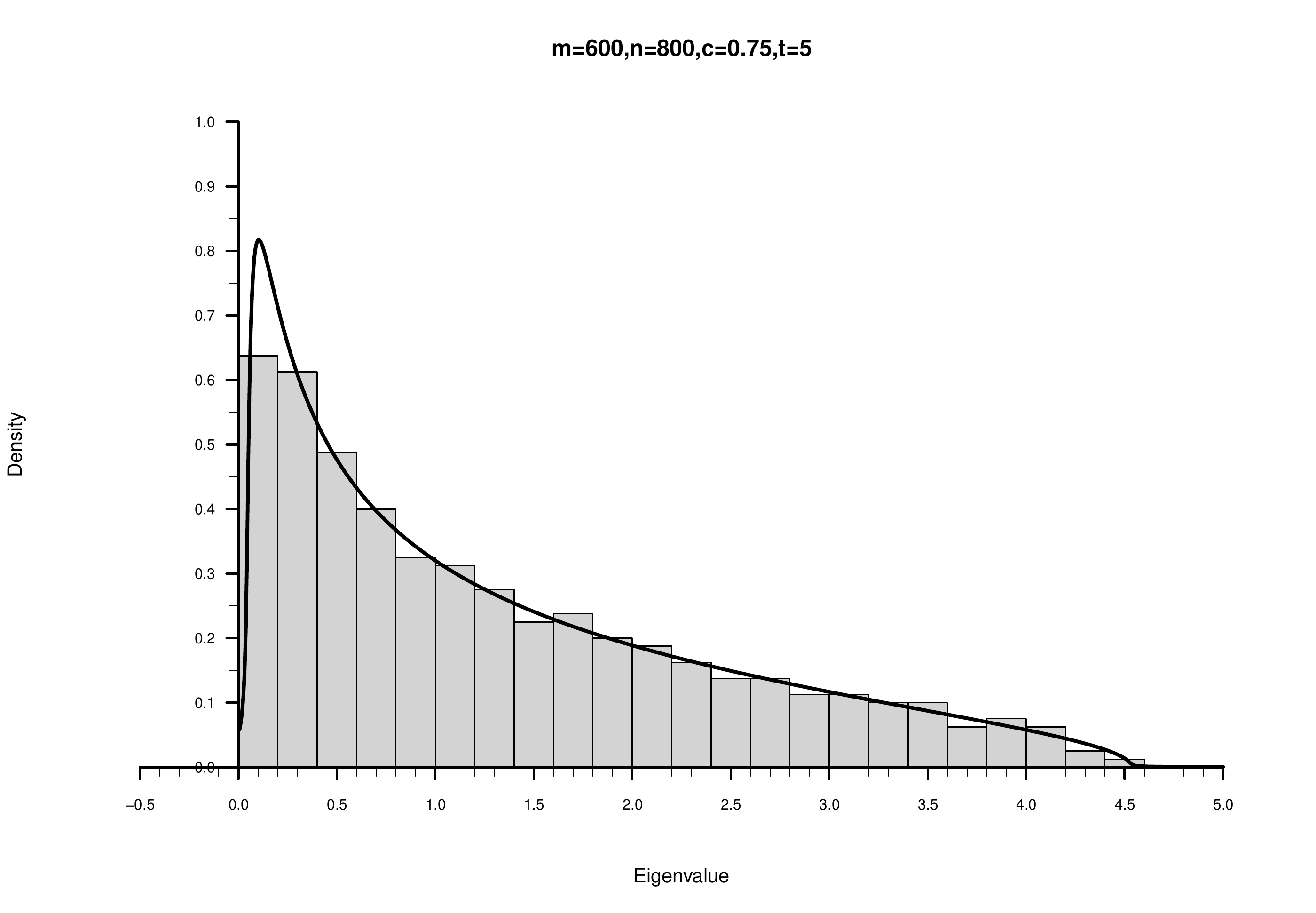}
\includegraphics[scale=0.25, angle=270]{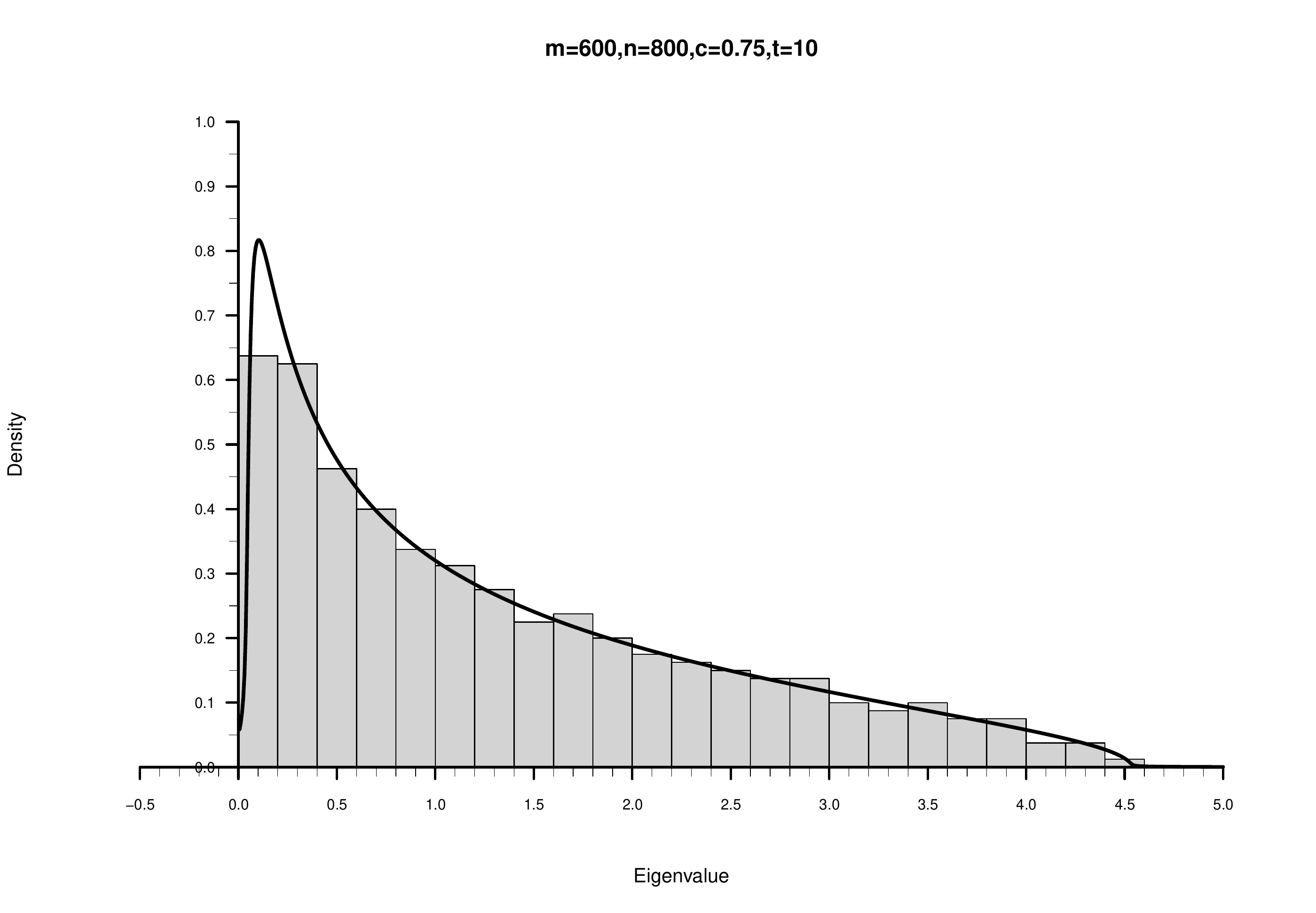}
\includegraphics[scale=0.25, angle=270]{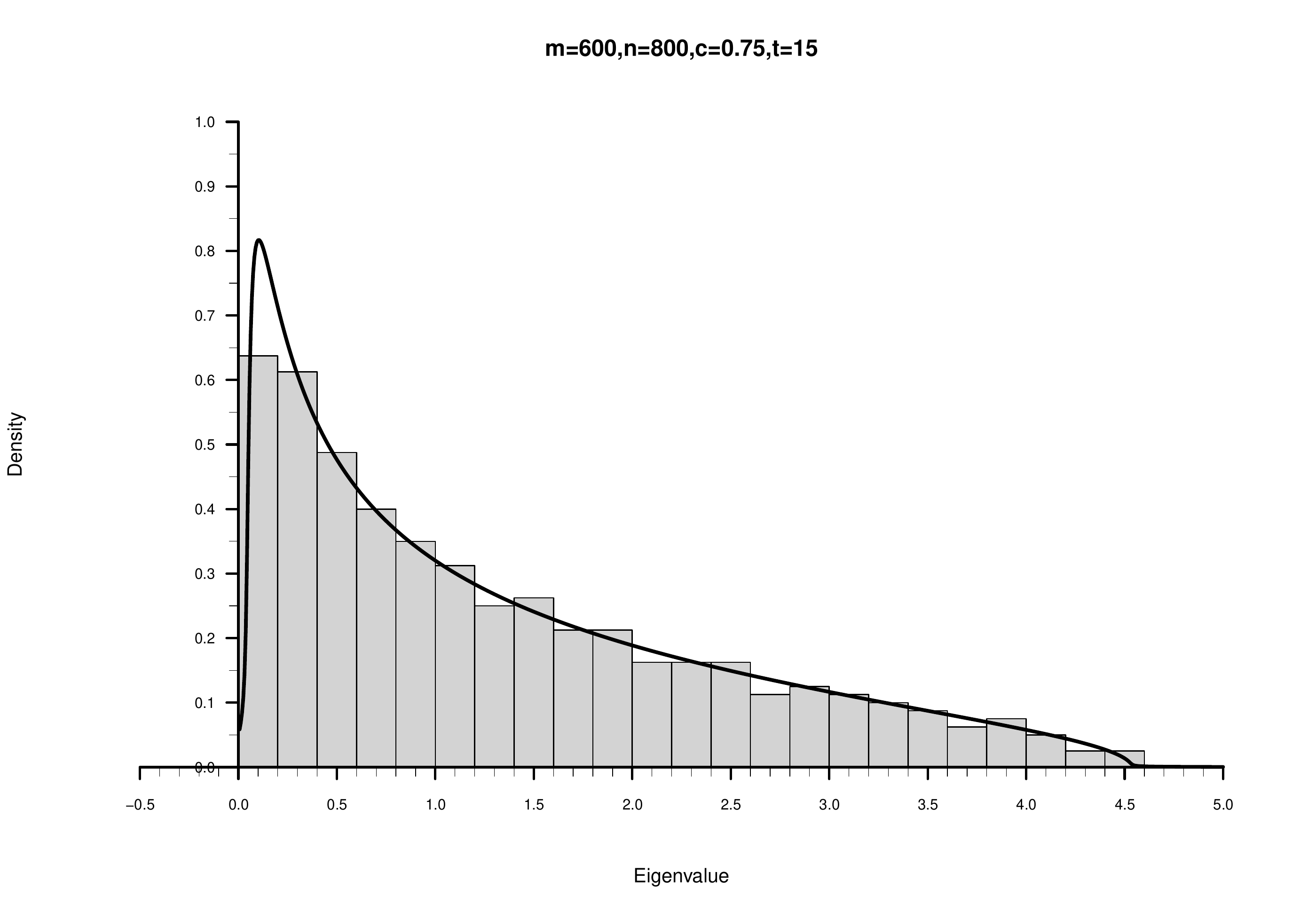}
\caption{Histogram of eigenvalues of the sample covariance matrices $\vv{S}_t$ for $t=1,5,10,15$}
\label{fig:MAR}
\end{figure}

Simulation results are shown in Figure \ref{fig:MAR}, in which the black solid curve in the four pictures represents the theoretical density function obtained by solving (\ref{eq:3.6.6}), and the grey bars are the histograms of eigenvalues of $\vv{S}_t$ for $t=1,5,10,15$, respectively. It can be seen that the empirical eigenvalue distributions match well the theoretical density and their behavior remain stable over time. 

Finally, we show our numerical method used for computing the density function in step (iii). \begin{itemize}
\item[(1)] Choose $800$ equally spaced points over the line $\{z=x+0.01{\bf i}:0\leq x\leq 5\}$, where the upper bound of real part is strictly larger than the maximum of observed empirical eigenvalues;
\item[(2)] Note that $G$ has three point masses. For each $z\in\mathbb{C}^+$ fixed, let $y_i=K(a_i,z)$ for $i=1,2,3$, where $a_i$'s are supports of $G$. Then $\vv{y}=(y_1,\ldots,y_3)$ is the unique solution of the following equation
$$
y_i=\sum_{j=1}^3\frac{q_j}{(1-a_ib_j)\(-z+c\sum_{l=1}^3\frac{p_l}{(1-a_lb_j)(1+y_l)}\)},~~~\hbox{for}~i=1,2,3
$$
over $\mathbb{C}^+$, in which $G=\sum_{i=1}^3 p_i\dlt_{a_i}$ and $H=\sum_{j=1}^3 q_j \dlt_{b_j}$. Thus, for each $z$ in the chosen grid, we are able to compute the values of $(K(a_i,z):i=1,2,3)$ by solving fixed points of the systems of equations above with initial value in $\mathbb{C}^+$. (Here, we use function FixedPoint in R package FixedPoint to find the values.) Then, from (\ref{eq:3.6.5}), we are able to obtain the values of $m(z)$'s for those $z$ in the chosen grid.

\item[(3)] Recall that for any $x_0\in\R$, if $\lim_{z\in\mathbb{C}^+,z\to x_0}{\rm Im}(m_\mu(z))$ exists, where $m_\mu$ is the Stieltjes transform of certain probability measure $\mu$, then the distribution function of $\mu$ is differentiable at $x_0$ with its derivative $\frac{1}{\pi}\lim_{z\in\mathbb{C}^+,z\to x_0}{\rm Im}(m_\mu(z))$. (See Theorem B.10 in \cite{BS10}.) Since the imaginary part of $z=x+0.01{\bf i}$'s are small enough, the value $\frac{1}{\pi}{\rm Im}(m(x+0.01{\bf i}))$ can be approximately viewed as the value of density function of the LSD at $x$.
\end{itemize}

\subsection{Finite mixture models}
\label{ssec:mix}

Statistical models based on finite mixture distributions are widely applied in many areas such as genetics, signal and image processing and machine learning \cite{FS06,Li18,McP00}. Let $\phi(\cdot|\vv{\mu},\vv{\Sigma})$ be the density function of a $p$-dimensional multivariate distribution with mean $\vv{\mu}\in\R^p$ and covariance matrix $\vv{\Sigma}$. A multivariate distribution is said to be a {\it finite mixture distribution} if its density function $f(\cdot)$ can be written in the following form:
\begin{equation}\label{eq:mix1}
f(\vv{x})=\sum_{i=1}^M \eta_i\phi(\vv{x}|\vv{\mu}_i,\vv{\Sigma}_i),
\end{equation}
where $\vv{\mu}_1,\ldots,\vv{\mu}_M\in\mathbb{R}^p$, $\vv{\Sigma}_1,\ldots,\vv{\Sigma}_M$ are $M$ non-negative definite Hermitian matrices, and the positive number $\eta_i$ is called the weight of the $i$-th component, and $\vv{\eta}=(\eta_1,\ldots,\eta_M)$ is the mixing distribution satisfying
$$
\eta_1,\ldots,\eta_M\geq0,~~~~\hbox{and}~~~\eta_1+\cdots+\eta_M=1.
$$
Let $I$ be an index random variable, taking values in $\{1,2,\ldots,M\}$ with $\P(I=i)=\eta_i$, $i=1,\ldots,M$. Consider a special case of finite mixture distribution in (\ref{eq:mix1}), in which $\vv{x}$ has the following representation
\begin{equation}
\vv{x}=\vv{\mu}_I+\vv{\Sigma}_I^{1/2}\vv{z},
\end{equation}
where $\vv{z}$ is $p$-dimensional random vector having i.i.d. entries with mean zero, unit variance and finite fourth moments, and is independent of index variable $I$.

\begin{assm}\label{assm:mix1}
Suppose that
\begin{itemize}
\item[(1)] $\vv{\Sigma}_{1p},\ldots,\vv{\Sigma}_{Mp}$ are uniformly bounded in matrix operator norm and are simultaneously diagonalizable;
\item[(2)] for each $i=1,\ldots,M$ and any sequence of $p\times p$ matrices $\vv{B}_1,\ldots,\vv{B}_n$ with $\sup_j\|\vv{B}_j\|_{\rm op}<\infty$, it holds that
$$
\frac{1}{n^3}\sum_{j=1}^n \E\left|(\vv{y}_{ij}-\vv{\mu}_i)^*\vv{B}_j(\vv{y}_{ij}-\vv{\mu}_i)-{\rm tr}(\vv{B}_j\vv{\Sigma}_{ip})\right|^2=o(1),
$$
where $\vv{y}_{i1},\ldots,\vv{y}_{in}$ are i.i.d. samples from population $\phi(\cdot|\vv{\mu}_i,\vv{\Sigma}_{ip})$;
\item[(3)] for each $i=1,\ldots,M$, the empirical distribution of eigenvalues of $\vv{\Sigma}_{ip}$, denoted as $H_{ip}$, converges weakly to a deterministic probability measure $H_i$ as $p\to\infty$.
\end{itemize}
\end{assm}

Consider a data matrix $\vv{X}_n=(\vv{x}_{1n},\ldots,\vv{x}_{nn})$ consisting of $n$ independent samples from the population with its density satisfying (\ref{eq:mix1}). Then, there exist an i.i.d. sequence $\{I_{1n},\ldots,I_{nn}\}$ of i.i.d. samples from population $I$, and an i.i.d. sequence $\{\vv{z}_1,\ldots,\vv{z}_n\}$ of $p$-dimensional random vectors having i.i.d. entries with mean zero, unit variance and finite fourth moments, such that $\{I_1,\ldots,I_n\}$ is independent of $\{\vv{z}_1,\ldots,\vv{z}_n\}$ and for $j=1,\ldots,n$, $\vv{x}_{jn}=\vv{\mu}_{I_{jn}}+\vv{\Sigma}_{I_{jn}}^{1/2}\vv{z}_{j}$.

\begin{thm}\label{thm:mixture}
Let $\vv{X}_n=(\vv{x}_{1n},\ldots,\vv{x}_{nn})$ be a data matrix consisting of $n$ independent samples from a finite mixture model in (\ref{eq:mix1}) satisfying Assumption \ref{assm:mix1}. In the high-dimensional setting:
\begin{equation}
n\to\infty,~~~~p=p(n)\to\infty,~~~~\hbox{such that}~\frac{p}{n}\to c\in(0,\infty),
\end{equation}
with probability one, the ESD of the sample covariance matrix $\vv{S}_n=\frac{1}{n}\vv{X}_n\vv{X}_n^*$ converges weakly to an LSD with its Stieltjes transform $m(z)$ satisfying 
\begin{equation}
m(z)=-\frac{1}{z}\int_0^1\frac{1}{1+K(s,z)}\d s,
\end{equation}
in which $K:(0,1)\times\mathbb{C}^+\to\mathbb{C}_+$ is the unique solution to the following functional equation
\begin{equation}\label{eq:mix-k}
K(s,z)=\sum_{i=1}^M\frac{H_i^{-1}(s)\eta_i}{-z+c\int_0^1\frac{H_i^{-1}(u)}{1+K(u,z)}\d u}
\end{equation}
and $H_i^{-1}(s)=\inf\{u|H(u)\geq s\}$ is the quantile function of $H_i$ for $i=1,\ldots,M$.
\end{thm}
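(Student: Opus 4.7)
The overall strategy is to obtain Theorem \ref{thm:mixture} as a direct corollary of Theorem \ref{thm:0.2}, after two preparatory reductions. First, I would remove the mean vectors. Setting $\vv{M}_n=(\vv{\mu}_{I_{1n}},\ldots,\vv{\mu}_{I_{nn}})$, we have $\vv{X}_n=\vv{X}_n^0+\vv{M}_n$ with $\vv{X}_n^0$ the centred data matrix whose columns are $\vv{\Sigma}_{I_{jn}}^{1/2}\vv{z}_j$. Since there are only $M$ distinct means, $\mathrm{rank}(\vv{M}_n)\le M$, so the difference $\vv{S}_n-\frac{1}{n}\vv{X}_n^0(\vv{X}_n^0)^*$ has rank at most $2M$, and by the rank inequality for ESDs (\cite{BS10}, Theorem A.44) the two ESDs differ in L\'evy distance by at most $O(M/p)\to 0$. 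Hence I may assume $\vv{\mu}_i=\vv{0}$ and work with $\vv{X}_n^0$.

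Second, I would condition on the i.i.d.\ label sequence $\vv{I}=(I_{1n},\ldots,I_{nn})$ and verify the assumptions of Theorem \ref{thm:0.2}. Take the parametrisation $\vv{a}_{l,p}=l/p\in(0,1)$ and $\vv{b}_{j,n}=I_{jn}\in\{1,\ldots,M\}$. Let $\vv{U}_n$ be the common unitary matrix from Assumption \ref{assm:mix1}(1) and $\vv{\Lambda}_{ip}=\vv{U}_n\vv{\Sigma}_{ip}\vv{U}_n^*$. With a suitably chosen ordering of the common basis, set $f_n(s,i)=\lambda_{i,\lceil ps\rceil}$, which by Assumption \ref{assm:mix1}(3) converges in $L^1$ of \text{Uniform}$(0,1)$ to the continuous function $f(s,i)=H_i^{-1}(s)$, establishing Assumption \ref{assm:2}(3'). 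The operator-norm bound and the quadratic-form moment condition of Assumption \ref{assm:1} follow respectively from parts (1) and (2) of Assumption \ref{assm:mix1} once we condition on $\vv{I}$; the marginal convergence $G_p\to\text{Uniform}(0,1)$ is trivial; and $H_n=n^{-1}\sum_{j=1}^n\delta_{I_{jn}}\to H=\sum_{i=1}^M\eta_i\delta_i$ almost surely by the strong law of large numbers applied to the i.i.d.\ labels.

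Then Theorem \ref{thm:0.2} applies and the equations (\ref{eq:0.5})--(\ref{eq:0.6}) collapse, with $f(s,b)=H_b^{-1}(s)$ and $H=\sum_{i=1}^M\eta_i\delta_i$, to precisely the claimed functional equation
\[
K(s,z)=\sum_{i=1}^M\eta_i\,\frac{H_i^{-1}(s)}{-z+c\int_0^1\frac{H_i^{-1}(u)}{1+K(u,z)}\d u},
\]
together with the formula for $m(z)$. Since the resulting LSD is deterministic and does not depend on the specific realisation of $\vv{I}$, the almost-sure limit passes from conditional to unconditional without further work.

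The hard part is the choice of ordering of the common basis used to define $f_n$. Simultaneous diagonalisability only fixes the shared eigenvectors up to a \emph{single} common permutation, and such a permutation cannot in general sort the eigenvalue sequences $\lambda_{i,\cdot}$ of all $M$ matrices into increasing order at once. What is needed is joint convergence: the empirical distribution of the $M$-tuples $(\lambda_{1,l},\ldots,\lambda_{M,l})_{l=1}^p$ should converge to the comonotonic coupling of the marginals $H_i$, so that $\lambda_{i,\lceil ps\rceil}\to H_i^{-1}(s)$ for every $i$ simultaneously. This strengthening, implicit in the statement, holds for instance when the $\vv{\Sigma}_{ip}$ arise via monotone functions of a common underlying covariance, but in general it must be assumed; otherwise, the LSD equation would have to be written in terms of the joint limit $G$ on $\R^M$ rather than the marginals $H_i$ alone. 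Handling this joint-versus-marginal subtlety is the most delicate point of the argument.
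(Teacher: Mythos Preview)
Your approach is essentially identical to the paper's: centre by a rank-$O(M)$ perturbation, condition on the labels, parametrise by $l/p$ and $I_j$, verify Assumption~\ref{assm:2}(3') via $H_{ip}^{-1}\to H_i^{-1}$, use the strong law for the label ESD, and invoke Theorem~\ref{thm:0.2}.

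The ``delicate point'' you flag about the common eigenvector ordering is handled in the paper exactly as you fear: the proof simply asserts $\lambda_{i,l}^{(p)}=H_{ip}^{-1}(l/p)$ for each $i$ without comment, which tacitly presumes that a single permutation of the shared basis sorts all $M$ spectra simultaneously (i.e.\ the comonotonic coupling). So you are not missing any step relative to the paper; your observation that the theorem as stated really requires either this comonotonicity or a formulation in terms of the joint eigenvalue limit on $\mathbb{R}^M$ is a correct reading of an implicit assumption in the original argument.
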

The proof of Theorem \ref{thm:mixture} is given in Section \ref{ssec:mixproof}.
\begin{rem}
Though our focus is mainly on the finite mixture model, the similar result as in previous theorem can be easily extended to infinite mixture cases with population covariance matrices simultaneously diagonalizable, in which the weight probability can be any probability distribution and the quantile functions of population covariance matrices should be continuously dependent on the population indices and uniformly bounded. 
\end{rem}

\begin{rem}
It should be noticed that the LSD of $\vv{S}_n$ in general does not follow the general Mar\v{c}enko-Pastur law obtained in \cite{BS95}, though $\vv{x}_{1n},\ldots,\vv{x}_{nn}$ are i.i.d. sequence from a common population following the finite mixture distribution defined in (\ref{eq:mix1}). The reason is the same as in Section 1 \cite{Li18}, that is, $\vv{x}_{1n},\ldots,\vv{x}_{nn}$ do not satisfy the {\it weak dependence condition}: for any $j=1,\ldots,n$ and $p\times p$ matrix $\vv{B}$ with bounded operator norm, 
$$
\E\left[(\vv{x}_{jn}-\vv{\mu}_{I_j})^*\vv{B}(\vv{x}_{jn}-\vv{\mu}_{I_j})-{\rm tr}(\vv{B}\vv{\Sigma})\right]^2=o(p^2),
$$ 
where $\vv{\Sigma}={\rm cov}(\vv{x}_{jn})$. Indeed, let us consider a two-component mixture with $\vv{\mu}_1=\vv{\mu}_2=\vv{0}$, $\vv{\Sigma}_1=c_1\vv{\Sigma}$ and $\vv{\Sigma}_2=c_2\vv{\Sigma}$, where $\vv{\Sigma}$ is a $p\times p$ positive definite Hermitian matrix with ${\rm tr}(\vv{\Sigma}_1)=O(p)$ and $c_1,c_2>0$. The finite mixture distribution in (\ref{eq:mix1}) then becomes
$$
\vv{x}=\sqrt{c_1}\vv{\Sigma}^{1/2}\vv{z}_1\mathbb{I}_{\{I=1\}}+\sqrt{c_2}\vv{\Sigma}^{1/2}\vv{z}_2\mathbb{I}_{\{I=2\}},
$$ 
in which $\vv{z}_1$ and $\vv{z}_2$ are now taken to be independent $p$-dimensional real-valued standard normal vectors and are independent of index variable $I$. Then, $\vv{\Sigma}_0={\rm cov}(\vv{x})=(\eta_1c_1+\eta_2c_2)\vv{\Sigma}$ and for $\vv{B}=\vv{I}_p$, it holds that
\begin{align*}
\E(\vv{x}^*\vv{B}\vv{x}-{\rm tr}(\vv{B}\vv{\Sigma}_0))^2&={\rm var}(\vv{x}^*\vv{B}\vv{x})\\
&={\rm var}(\E(\vv{x}^*\vv{B}\vv{x}|I))+\E({\rm var}(\vv{x}^*\vv{B}\vv{x}|I))\\
&=\left(\sum_{k=1}^2\eta_k c_k^2-\left(\sum_{k=1}^2\eta_k c_k\right)^2\right)({\rm tr}(\vv{\Sigma}))^2+o(p^2).
\end{align*}
Once $\left(\sum_{k=1}^2\eta_k c_k^2-\left(\sum_{k=1}^2\eta_k c_k\right)^2\right)>0$, it is easy to see that $\E(\vv{x}^*\vv{B}\vv{x}-{\rm tr}(\vv{B}\vv{\Sigma}_0))^2=O(p^2)$ instead of $o(p^2)$, which implies that the weak dependence condition fails.

\end{rem}
\begin{rem}
We show next our results cover those in \cite{Li18} about the scale mixture model of the form $\vv{x}=w\vv{\Sigma}_p^{1/2}\vv{z}$, where $w$ is a scalar mixing random variable taking $M$ distinct non-negative values $\{u_1,\ldots,u_M\}$ with $\P(w=u_i)=\eta_i$ for each $i=1,\ldots,M$, $\vv{\Sigma}_p$ is a $p\times p$ non-negative definite Hermitian matrix, and $\vv{z}$ is a standard $p$-dimensional normal vector and is independent of $w$. Let $\vv{\Sigma}_{ip}=u_i\vv{\Sigma}_p$. It is easy to see this scale mixture model satisfying assumptions above. Note that $H_i^{-1}(s)=u_iH^{-1}(s)$ for each $i=1,\ldots,M$, where $H^{-1}(s)$ and $H_i^{-1}(s)$ be the quantile functions of LSDs of $\vv{\Sigma}_p$ and $u_i\vv{\Sigma}_p$. The second equation (\ref{eq:mix-k}) in Theorem \ref{thm:mixture} becomes:
$$
K(s,z)=H^{-1}(s)\sum_{i=1}^M\frac{\eta_i u_i}{-z+cu_i\int_0^1 \frac{H^{-1}(s)}{1+K(s,z)}\d s},
$$
which is linear with respect to $H^{-1}(s)$. Define 
$$
\tilde{K}(z)=\sum_{i=1}^M\frac{\eta_i u_i}{-z+cu_i\int_0^1 \frac{H^{-1}(s)}{1+K(s,z)}\d s}.
$$
Thus, $K(s,z)=H^{-1}(s)\tilde{K}(z)$. It then follows that
$$
\int_0^1 \frac{1}{1+K(s,z))}\d s=\int \frac{1}{1+\lambda \tilde{K}(z)}\d H(\lambda),~~~\int_0^1 \frac{H^{-1}(s)}{1+K(s,z)}\d s=\int \frac{\lambda}{1+\lambda \tilde{K}(z)}\d H(\lambda).
$$
Consequently, by using Theorem \ref{thm:mixture}, for a scale mixture model, the ESD of its sample covariance matrix converges weakly to a unique LSD with its Stieltjes transform $m(z)$ satisfying
$$
m(z)=-\frac{1}{z}\int \frac{1}{1+\lambda\tilde{K}(z)}\d H(\lambda),
$$
where $K:\mathbb{C}^+\to\mathbb{C}_+$ is the unique solution to the following functional equation
$$
\tilde{K}(z)=\sum_{i=1}^M\frac{\eta_i u_i}{-z+cu_i\int \frac{\lambda}{1+\lambda\tilde{K}(z)}\d s}.
$$
Moreover, if for some $z\in\mathbb{C}^+$, $\tilde{K}(z)=0$. Then, $m(z)=-\frac{1}{z}$ and
$$
\int \frac{\lambda}{1+\lambda\tilde{K}(z)}\d H(\lambda)=\int \lambda\d H(\lambda)=:\kappa<\infty.
$$
Then, we have
$$
0=\tilde{K}(z)=\sum_{i=1}^M \frac{\eta_i u_i}{-z+cu_i\kappa},
$$
so that $\eta_iu_i=0$ for any $i=1,\ldots,M$, since $u_i$'s are non-negative. This suggests that there must exist one point among $\{u_1,\ldots,u_M\}$ equal to zero and the corresponding mass must equal to $1$. In other words, $\vv{\Sigma}_1=\cdots=\vv{\Sigma}_M=\vv{O}$ so that $m(z)\equiv-1/z$. Consequently, $\tilde{K}(z)=0$ for some $z$ if and only if either $\vv{\Sigma}_p=\vv{O}$ or $w\equiv 0$. 

When $\vv{\Sigma}\neq\vv{O}$ and $w\neq 0$, we have $\tilde{K}:\mathbb{C}^+\to\mathbb{C}^+$. Let $q(z)=\tilde{K}(z)$ and $p(z)=-\frac{1}{z}\int \frac{\lambda}{1+\lambda\tilde{K}(z)}\d H(\lambda)$. It then follows that
$$
zm(z)=\begin{cases}
-1+\int \cfrac{p(z)t}{1+cp(z)t}\d G(t),\\
-\int\cfrac{1}{1+q(z)t}\d H(t),\\
-1-zp(z)q(z),
\end{cases}
$$
where $G$ is the discrete distribution generated by $\vv{\eta}$ and $(z,m(z),p(z),q(z))$ belongs to the region
$$
\{(z,m(z),p(z),q(z)):-(1-c)/z+cm(z)\in\mathbb{C}^+,zp(z)\in\mathbb{C}^+,q(z)\in\mathbb{C}^+,z\in\mathbb{C}^+\}.
$$
This coincides with results obtained in \cite[Sec 2.1]{Li18}.
\end{rem}

Finally, we give a simple illustrative example for Theorem \ref{thm:mixture}.
\begin{example}[Two-population Mixture model.] \label{exm:2mix}
In what follows, a two-component mixture model with one of them possessing a unit population covariance matrix. Specifically, consider the mixture model defined in (\ref{eq:mix1}) with $M=2$, $\vv{\mu}_1=\vv{\mu}_2=\vv{0}$, $\vv{\Sigma}_{1p}=\vv{I}_p$ and $\vv{\Sigma}_{2p}$ a $p\times p$ non-negative definite Hermitian matrix. The mixture weight probability $\vv{\eta}=(\eta_1,\eta_2)$ with $\eta_2=1-\eta_1$. 

Let $\vv{x}_{1n},\ldots,\vv{x}_{nn}$ be a random sample of the centred mixture model with size $n$. Define $\vv{S}_n=\frac{1}{n}\sum_{j=1}^n \vv{x}_{jn}\vv{x}^*_{jn}$. In the high-dimensional setting, i.e., 
$$
n\to\infty~~~,p=p(n)\to\infty,~~~\hbox{such that}~\frac{p}{n}\to c\in(0,\infty),
$$
the LSD of $\vv{S}_n$ exists uniquely with its Stieltjes transform satisfying 
\begin{equation}\label{eq:mix2}
-zm(z)=\int \frac{1}{1-\frac{\eta_1}{z+c zm(z)}+\left(\underline{m}(z)+\frac{\eta_1}{z+c zm(z)}\right)\lambda}\d H_2(\lambda).
\end{equation}
The derivation of equation (\ref{eq:mix2}) is provided in Section \ref{ssec:mixproof}.

When $\eta_1=0$, the model reduces to the classical sample covariance matrix case for i.i.d. samples with population covariance matrix $\vv{\Sigma}_{2p}$, in which $\alp(z)\equiv0$ and (\ref{eq:mix2}) becomes the celebrated Mar\v{c}enko-Pastur equation:
$$
-zm(z)=\int\frac{1}{1+\underline{m}(z) t}\d H_2(t).
$$
\end{example}
\begin{rem}
The way in deriving (\ref{eq:mix2}) only works the two-component mixture case, since the kernel function $K(s,z)$ in (\ref{eq:mix-k}) has no closed-form representation by $z$ and $m(z)$ if the number of mixture components is more than two. 
\end{rem}

\appendix
\section{Proof of Theorems}
\label{sec:proofs}
\subsection{Proof of Theorem \ref{thm:0.1}}

Let $\mathcal{S}_n=\vv{S}_n-z\vv{I}$ and $\vv{r}_i=\vv{x}_i/\sqrt{n}$. Denote $\mathcal{S}_{i,n}=\mathcal{S}_n-\vv{r}_i\vv{r^*}_i$.
 
Define
\begin{equation}\label{eq:a.0.7}
K=\frac{1}{n}\sum_{i=1}^n \frac{\vv{\Sgm}_i}{1+\vv{r}^*_i\mathcal{S}_{i,n}^{-1}\vv{r}_i}.
\end{equation} 
Let 
\begin{equation}\lb{eq:a.0.8}
k_n(\vv{a},z)=\frac{1}{n}\sum_{i=1}^n \frac{f(\vv{a},\vv{b}_i)}{1+\vv{r}^*_i\mathcal{S}_{i,n}^{-1}\vv{r}_i}.
\end{equation}
Then,
\begin{itemize}
\item[(1)] Since $\vv{\Sigma}_i$'s are simultaneously diagonalizable, $(K-z\vv{I})^{-1}$ commutes with $\vv{\Sigma}_i$'s.

\item[(2)] $K$ is bounded in norm, i.e.,
\begin{equation}\label{eq:.a.10}
\|K\|\leq\frac{|z|}{{\rm Im}(z)}\left(\max_{n}\max_{1\leq i\leq n}\|\vv{\Sigma}_i\|_{\rm op}+1\right)=:C_0<\infty.
\end{equation}
In fact, $K$ has its eigenvalue in the following form:
$$
\frac{1}{n}\sum_{i=1}^n \frac{\lambda_{i,l}}{1+\vv{r}^*_i\mathcal{S}_{i,n}^{-1}\vv{r}_i}.
$$ 
Notice that
\begin{equation}\label{ieq:1}
\bigg|\frac{1}{1+\vv{r}^*_i\mathcal{S}_{i,n}^{-1}\vv{r}_i}\bigg|\leq\frac{|z|}{{\rm Im}(z)}
\end{equation}
and $\max_{i,l}|\lambda_{i,l}|\leq \max_{n}\max_{1\leq i\leq n}\|\vv{\Sgm}_i\|+1$. The conclusion then follows.

Consequently, in this case, $|k_n(\vv{a},z)|\leq C_0$ for any $n$ and $z\in\mathbb{C}^+$. Moreover, we truncate $f$ so that $|f(\vv{a},\vv{b})|\leq C_0$.

\item[(3)] The eigenvalues of $K-z\vv{I}$ can be written as
\begin{equation}\label{eq:a.11}
\frac{1}{n}\sum_{i=1}^n \frac{f(\vv{a}_l,\vv{b}_i)}{1+\vv{r}^*_i\mathcal{S}_{i,n}^{-1}\vv{r}_i}-z= k_n(\vv{a}_l,z)-z.
\end{equation}

\item[(4)]$(K-z\vv{I})^{-1}$ is bounded in norm. To see this, we first observe that ${\rm Im}(1+\vv{r}^*_i\mathcal{S}_{i,n}^{-1}\vv{r}_i)^{-1}$ is negative. Indeed, since eigenvalues of $\mathcal{S}_{i,n}$ have a common imaginary part $-{\rm Im}(z)$, elementary calculation suggests that the imaginary part of $\vv{r}^*_i\mathcal{S}_{i,n}^{-1}\vv{r}_i$ is positive. Thus, the imaginary part of $(1+\vv{r}^*_i\mathcal{S}_{i,n}^{-1}\vv{r}_i)^{-1}$ must be negative. It then follows that
$$
{\rm Im}\left(\fr{1}{n}\sum_{i=1}^n \frac{\lambda_{i,l}}{1+\vv{r}^*_i\mathcal{S}_{i,n}^{-1}\vv{r}_i}-z
\right)\leq-{\rm Im}(z)<0.
$$
Therefore, $(K-zI)^{-1}$ is always bounded by ${\rm Im}(z)^{-1}$.
\end{itemize}

Next, let us prove the main theorem step by step.

(a) To prove $m_n(z)-\mathbb{E} m_n(z)\overset{\rm a.s.}{\to}0$. 

Denote $\mathbb{E}_k(\cdot)=\mathbb{E}(\cdot|\vv{r}_{k+1},\ldots,\vv{r}_n)$ with $\mathbb{E}_n(\cdot)=\mathbb{E}(\cdot)$ so that $m_n(z)=\mathbb{E}_0(m_n(z))$ and $\mathbb{E}(m_n(z))=\mathbb{E}_n(m_n(z))$. It then follows that
\begin{align*}
m_n(z)-\mathbb{E}(m_n(z))&=\sum_{k=1}^n \left(\mathbb{E}_{k-1}(m_n(z))-\mathbb{E}_k(m_n(z))\right)\\
&=\frac{1}{n}\sum_{k=1}^n\left[\E_{k-1}-\E_k\r]\left({\rm tr}\(\mathcal{S}_n^{-1}\)-{\rm tr}\left(\mathcal{S}_{k,n}^{-1}\right)\right)\\
&=:\fr{1}{n}\sum_{k=1}^n\left[\E_{k-1}-\E_k\right](\gm_k),
\end{align*}
where by the rank-1 perturbation identity
$$
\gm_k=\fr{\vv{r}^*_k\mathcal{S}_{k,n}^{-2}\vv{r}_k}{1+\vv{r}^*_k\mathcal{S}_{k,n}^{-1}\vv{r}_k}.
$$
Note that
$$
\bigg|\fr{\vv{r}^*_k\mathcal{S}_{k,n}^{-2}\vv{r}_k}{1+\vv{r}^*_k\mathcal{S}_{k,n}^{-1}\vv{r}_k}\bigg|\leq\fr{\vv{r}^*_k\((\mathcal{S}_{k,n}-{\rm Re}(z)\vv{I}_p)^2+{\rm Im}(z)^2\vv{I}_p\)^{-1}\vv{r}_k}{{\rm Im}(1+\vv{r}^*_k\mathcal{S}_{k,n}^{-1}\vv{r}_k)}=\frac{1}{{\rm Im}(z)},
$$
that is, $|\gm_k|\leq{\rm Im}(z)^{-1}$. Thus, $([\E_{k-1}-\E_k](\gm_k))$ is a bounded martingale difference sequence. 
It then follows by the Burkholder inequality (Lemma 2.12 in\cite{BS10}) that
\begin{align*}
\E|m_n(z)-\E(m_n(z))|^q&\leq K_q n^{-q}\E\(\sum_{k=1}^n|[\E_{k-1}-\E_k](\gm_k)|^2\)^{\fr{q}{2}}\\
&\leq K_q\(\fr{2}{{\rm Im}(z)}\)^{q}n^{-\fr{q}{2}}
\end{align*}
holds for any $q>2$, which suggests the almost sure convergence of $m_n(z)-\E(m_n(z))$ by the Borel-Cantelli Lemma.

(b) To prove that $\E m_n(z)-\E\fr{1}{p}{\rm tr}(K-z\vv{I})^{-1}=o(1)$.

Note that for any $n\times n$ matrix $K$ such that $K-z\vv{I}$ is 
invertible, it always holds that
\be\lb{eq:a.1}
\mathcal{S}_n^{-1}-(K-z\vv{I})^{-1}=\sum_{i=1}^n \fr{(K-z\vv{I})^{-1}\vv{r}_i\vv{r}^*_i\mathcal{S}_{i,n}^{-1}}{1+\vv{r}^*_i\mathcal{S}_{i,n}^{-1}\vv{r}_i}-(K-z\vv{I})^{-1}K\mathcal{S}_n^{-1}.
\de

In fact, from \rf{eq:a.1} with the fact that
$$
{\rm Im}(z(1+\vv{r}^*_i\vv{S}_{i,n}^{-1}\vv{r}_i))\geq {\rm Im}(z)>0,
$$
one has
\begin{equation}\label{eq:thm1b.1}
\begin{split}
\E|m_n(z)-\fr{1}{p}{\rm tr}(K-z\vv{I})^{-1}|&\leq\fr{1}{p}\E\(\sum_{i=1}^n\fr{|\vv{r}^*_i\mathcal{S}_{i,n}^{-1}(K-z\vv{I})^{-1}\vv{r}_i-n^{-1}{\rm tr}\(\mathcal{S}_{n}^{-1}(K-z\vv{I})^{-1}\vv{\Sgm}_i\)|}{1+\vv{r}^*_i\mathcal{S}_{i,n}^{-1}\vv{r}_i}\)\\
&\leq\fr{1}{pn}\fr{|z|}{{\rm Im}(z)}\sum_{i=1}^n\E|\vv{x}^*_i\mathcal{S}_{i,n}^{-1}(K-z\vv{I})^{-1}\vv{x}_i-{\rm tr}\(\mathcal{S}_{n}^{-1}(K-z\vv{I})^{-1}\vv{\Sgm}_i\)|\\
&=:\fr{1}{pn}\fr{|z|}{{\rm Im}(z)}\sum_{i=1}^n d_{i}.
\end{split}
\end{equation}
For each $i=1,\ldots,n$, let
\bg{align*}
d_{i,1}&=\E|\vv{x}^*_i\mathcal{S}_{i,n}^{-1}(K-z\vv{I})^{-1}\vv{x}_i-{\rm tr}\(\mathcal{S}_{i,n}^{-1}(K-z\vv{I})^{-1}\vv{\Sgm}_i\)|\\
d_{i,2}&=\E|{\rm tr}\(\mathcal{S}_{i,n}^{-1}(K-z\vv{I})^{-1}\vv{\Sgm}_i\)-{\rm tr}\(\mathcal{S}_{n}^{-1}(K-z\vv{I})^{-1}\vv{\Sgm}_i\)|
\end{align*}
It is easy to see that $d_{i}\leq d_{i,1}+d_{i,2}$. On one hand, since $\|\mathcal{S}_{i,n}^{-1}\|_{\rm op}\leq{\rm Im}(z)^{-1}$ and $\|(K-z\vv{I})^{-1}\|_{\rm op}\leq{\rm Im}(z)^{-1}$, by assumption \rf{eq:0.1} and the independence of $\vv{x}_i$'s, 
\bg{align*}
\fr{1}{n}\sum_{i=1}^n d_{i,1}^2\leq\fr{1}{n}\sum_{i=1}^n\E(\vv{x}^*_i\mathcal{S}_{i,n}^{-1}(K-z\vv{I})^{-1}\vv{x}_i-{\rm tr}\(\mathcal{S}_{i,n}^{-1}(K-z\vv{I})^{-1}\vv{\Sgm}_i\))^2=o(n^2).
\end{align*}
so that
\begin{equation}\label{eq:thm1b.2}
\fr{1}{n}\sum_{i=1}^n d_{i,1}\leq\(\fr{1}{n}\sum_{i=1}^n d_{i,1}^2\)^{1/2}=o(n).
\end{equation}

On the other hand, note that
$$
\mathcal{S}_n^{-1}-\mathcal{S}_{i,n}^{-1}=\fr{\mathcal{S}_{i,n}^{-1}\vv{r}_i\vv{r}^*_i\mathcal{S}_{i,n}^{-1}}{1+\vv{r}^*_i\vv{S}_{i,n}^{-1}\vv{r}_i}.
$$
Let $\vv{B}_i=(K-z\vv{I})^{-1}\vv{\Sgm}_i$. By using Lemma 2.6 in \cite{BS94}, one has
$$
|{\rm tr}\(\mathcal{S}_{i,n}^{-1}(K-z\vv{I})^{-1}\vv{\Sgm}_i\)-{\rm tr}\(\mathcal{S}_{n}^{-1}(K-z\vv{I})^{-1}\vv{\Sgm}_i\)|\leq\|\vv{\Sgm}_i\|_{op}{\rm Im}(z)^{-2}
$$
so that
\begin{equation}\label{eq:thm1b.3}
\fr{1}{pn}\sum_{i=1}^n d_{i,2}\leq{\rm Im}(z)^{-2}\fr{1}{pn}\sum_{i=1}^n \|\vv{\Sgm}_i\|_{op}=o(\fr{1}{n}).
\end{equation}
Combine (\ref{eq:thm1b.1})-(\ref{eq:thm1b.3}), we have $\E m_n(z)-\E\fr{1}{p}{\rm tr}(K-z\vv{I})^{-1}=o(1)$.

It should be noticed that the essence of the above discussion is to prove that for any $B_1,\ldots,B_n$ with norm bounded uniformly, then
\begin{equation}\label{eq:thm1b.4}
\fr{1}{n^2}\sum_{i=1}^n\E|\vv{x}^*_i\mathcal{S}_{i,n}^{-1}B_i\vv{x}_i-{\rm tr}\(\mathcal{S}_{n}^{-1}B_i\vv{\Sgm}_i\)|=o(1).
\end{equation}

(c) To prove that for any $\vv{a}\in\R^k$, $\E|k_n(\vv{a},z)-\E k_n(\vv{a},z)|\to0$ as $n\to\ift$. 

By (\ref{eq:thm1b.4}), one observes first that
$$
\E\bigg|k_n(\vv{a},z)-\fr{1}{n}\sum_{i=1}^n\fr{f(\vv{a},\vv{b}_i)}{1+\fr{1}{n}{\rm tr}(\mathcal{S}_n^{-1}\vv{\Sgm}_i)}\bigg|\leq C_0^2\fr{1}{n^2}\sum_{i=1}^n\E|\vv{x}^*_i\mathcal{S}_{i,n}^{-1}\vv{x}_i-{\rm tr}(\mathcal{S}_n^{-1}\vv{\Sgm}_i)|=o(1).
$$
Let
$$
Q_n(\vv{a},z)=\fr{1}{n}\sum_{i=1}^n\fr{f(\vv{a},\vv{b}_i)}{1+\fr{1}{n}{\rm tr}(\mathcal{S}_n^{-1}\vv{\Sgm}_i)}.
$$
It suffices to show $\E|Q_n-\E Q_n|=o(1)$. \red{We use martingale difference technique similar to (a).} Let
$$
Q_{-k,n}(\vv{a},z)=\fr{1}{n}\sum_{i\neq k}\fr{f(\vv{a},\vv{b}_i)}{1+\fr{1}{n}{\rm tr}(\mathcal{S}_{k,n}^{-1}\vv{\Sgm}_i)}.
$$
Then, since $Q_{-k,n}(\vv{a},z)$ is independent of $\vv{r}_k$, 
\bg{align*}
Q_n(\vv{a},z)-\E Q_n(\vv{a},z)&=\sum_{k=1}^n[\E_{k-1}-\E_k](Q_n(\vv{a},z))\\
&=\sum_{k=1}^n[\E_{k-1}-\E_k](Q_n(\vv{a},z)-Q_{-k,n}(\vv{a},z))=:\fr{1}{n}\sum_{k=1}^n\(\E_{k-1}-\E_k\)(\dlt_k)
\end{align*}
Note that 
$$
|{\rm tr}\((\mathcal{S}_{-k,n}^{-1}-\mathcal{S}_{n}^{-1})\vv{\Sgm}_i\)|\leq\fr{\|\vv{\Sgm}_i\|_{\rm op}}{{\rm Im}(z)}.
$$
Similar to the fact that ${\rm Im}(z(1+\vv{r}^*_i\mathcal{S}_{i,n}^{-1}\vv{r}_i))\geq{\rm Im}(z)$, one has
$$
{\rm Im}\(z(1+\frac{1}{n}{\rm tr}(\mathcal{S}_n^{-1}\vv{\Sigma}_i))\)\geq{\rm Im}(z)>0,~~~~{\rm Im}\(z(1+\frac{1}{n}{\rm tr}(\mathcal{S}_{i,n}^{-1}\vv{\Sigma}_i))\)\geq{\rm Im}(z)>0
$$
so that
$$
\bigg|\frac{1}{1+\frac{1}{n}{\rm tr}(\mathcal{S}_n^{-1}\vv{\Sigma}_i)}\bigg|\leq\frac{|z|}{{\rm Im}(z)},~~~~\bigg|\frac{1}{1+\frac{1}{n}{\rm tr}(\mathcal{S}_{i,n}^{-1}\vv{\Sigma}_i)}\bigg|\leq\frac{|z|}{{\rm Im}(z)}.
$$
It then follows that
\bg{align*}
\fr{1}{n}|\dlt_k|&=|Q_n(\vv{a},z)-Q_{-k,n}(\vv{a},z)|\\
&\leq\fr{1}{n}\sum_{i\neq k}\fr{f(\vv{a},\vv{b}_i)}{|(1+n^{-1}{\rm tr}(\mathcal{S}_{n}^{-1}\vv{\Sgm}_i))(1+n^{-1}{\rm tr}(\mathcal{S}_{-k,n}^{-1}\vv{\Sgm}_i))|}\cdot\fr{1}{n}|{\rm tr}\((\mathcal{S}_{-k,n}^{-1}-\mathcal{S}_{n}^{-1})\vv{\Sgm}_i\)|\\
&+\fr{1}{n}\fr{f(\vv{a},\vv{b}_k)}{|1+n^{-1}{\rm tr}(\mathcal{S}_n^{-1}\vv{\Sgm}_k)|}\\
&\leq\fr{1}{n}\(\frac{|z|^2C_0^2}{{\rm Im}(z)^3}+\frac{|z|C_0}{{\rm Im}(z)}\),
\end{align*}
by using that fact $|f(\vv{a},\vv{b})|\leq\max_{n}\max_{i}\|\vv{\Sigma}_i\|_{\rm op}+1=C_0$.
This suggests that $(\(\E_{k-1}-\E_k\)(\dlt_k)$ is a bounded martingale difference sequence. By the Burkholder inequality, one has
\bg{align*}
\E|Q_n(\vv{a},z)-\E(Q_n(\vv{a},z))|^q&\leq K_q n^{-q}\E\(\sum_{k=1}^n|[\E_{k-1}-\E_k](\dlt_k)|^2\)^{\fr{q}{2}}\\
&\leq K_q \(\frac{|z|^2C_0^2}{{\rm Im}(z)^3}+\frac{|z|C_0}{{\rm Im}(z)}\)^{q}n^{-\fr{q}{2}}
\end{align*}
for any $q>2$, which suggests the almost sure convergence of $Q_n(\vv{a},z)-\E(Q_n(\vv{a},z))$ for any $\vv{a}\in\R^k$. 

(d) To prove that 
$$
\E \fr{1}{p}{\rm tr}(K-z\vv{I})^{-1}=\fr{1}{p}\sum_{l=1}^p\fr{1}{\E k_n(\vv{a}_l,z)-z}+o(1).
$$

Recall that
$$
\fr{1}{p}{\rm tr}(K-z\vv{I})^{-1}=\fr{1}{p}\sum_{l=1}^p\fr{1}{k_n(\vv{a}_l,z)-z}.
$$
It then follows that
\bg{align*}
\E\bigg|\fr{1}{p}{\rm tr}(K-z\vv{I})^{-1}-\fr{1}{p}\sum_{l=1}^p\fr{1}{\E k_n(a_l,z)-z}\bigg|&\leq\fr{1}{p}\sum_{l=1}^p\E\bigg|\fr{1}{k_n(a_l,z)-z}-\fr{1}{\E k_n(a_l,z)-z}\bigg|\\
&\leq\fr{1}{p}\fr{1}{{\rm Im}(z)^2}\sum_{l=1}^p \E|k_n(\vv{a}_l,z)-\E(k_n(\vv{a}_l,z))|\\
&={\rm Im}(z)^{-2}\int \E|k_n(\vv{a},z)-\E(k_n(\vv{a},z))|\d G_p(\vv{a}).
\end{align*}
Note that $|k_n(\vv{a},z)|\leq C_0<\ift$. By (c) and the Dominant Convergence Theorem, one finds the right hand side tends to $0$.

(e) To prove that 
$$
\E \bigg|k_n(\vv{a},z)-\fr{1}{n}\sum_{i=1}^n\fr{f(\vv{a},\vv{b}_i)}{1+\fr{1}{n}{\rm tr}\((K-zI)^{-1}\vv{\Sgm}_i\)}\bigg|=o(1).
$$

Note that
\bg{align*}
&\E \bigg|k_n(\vv{a},z)-\fr{1}{n}\sum_{i=1}^n\fr{f(\vv{a},\vv{b}_i)}{1+\fr{1}{n}{\rm tr}\((K-zI)^{-1}\vv{\Sgm}_i\)}\bigg|\\
=&\E\bigg|\fr{1}{n}\sum_{i=1}^n\(\fr{f(\vv{a},\vv{b}_i)}{1+\vv{r}^*_i\vv{S}_{i,n}^{-1}\vv{r}_i}-\fr{f(\vv{a},\vv{b}_i)}{1+\fr{1}{n}{\rm tr}\((K-zI)^{-1}\vv{\Sgm}_i\)}\)\bigg|\\
\leq& \(\fr{|z|}{{\rm Im}(z)}\)^2C_0\cdot \fr{1}{n}\sum_{i=1}^n\E|\vv{r}^*_i\vv{S}_{i,n}^{-1}\vv{r}_i-\fr{1}{n}{\rm tr}\((K-zI)^{-1}\vv{\Sgm}_i\)|\\
\leq& \(\fr{|z|}{{\rm Im}(z)}\)^2C_0\cdot \fr{1}{n}\sum_{i=1}^n\E|\vv{r}^*_i\mathcal{S}_{i,n}^{-1}\vv{r}_i-\fr{1}{n}{\rm tr}(\mathcal{S}_{n}^{-1}\vv{\Sgm}_i)|\\
+&\(\fr{|z|}{{\rm Im}(z)}\)^2C_0\cdot \fr{1}{n^2}\sum_{i=1}^n\E|{\rm tr}(\mathcal{S}_{n}^{-1}\vv{\Sgm}_i)-{\rm tr}\((K-zI)^{-1}\vv{\Sgm}_i\)|=:I_1+I_2.
\end{align*}
From the assertion (\ref{eq:thm1b.4}) in step (b), one finds $I_1\to 0$ as $n\to\ift$. For $I_2$, by using similar technique in step (b) to analyze the difference $\mathcal{S}_n^{-1}\vv{\Sigma}_i-(K-z\vv{I})^{-1}\vv{\Sigma}_i$, we are able to show that for any $1\leq i\leq n$,
$$
\frac{1}{p}\E|{\rm tr}(\mathcal{S}_n^{-1}\vv{\Sigma}_i-(K-z\vv{I})^{-1}\vv{\Sigma}_i)|=o(1).
$$
Meanwhile, note that
$$
\fr{1}{p}|{\rm tr}(\mathcal{S}_n^{-1}\vv{\Sgm}_i)-{\rm tr}\((K-zI)^{-1}\vv{\Sgm}_i\)|\leq\fr{2}{{\rm Im}(z)}\sup_n\max_{1\leq i\leq n}\|\vv{\Sgm}_i\|_{\rm op}<\ift.
$$
By the Dominant Convergence Theorem, one has $I_2\to0$ as well.

Recall that
\be\lb{eq:a.12}
\fr{1}{p}{\rm tr}(K-z\vv{I})^{-1}\vv{\Sgm}_i=\fr{1}{p}\sum_{l=1}^p \fr{f(\vv{a}_l,\vv{b}_i)}{k_n(\vv{a}_l,z)-z}.
\de

(f) To prove that
$$
\E k_n(\vv{a},z)=\fr{1}{n}\sum_{i=1}^n\fr{f(\vv{a},\vv{b}_i)}{1+c_n\cdot\fr{1}{p}\sum_{l=1}^p\fr{f(\vv{a}_l,\vv{b}_i)}{\E k_n(\vv{a}_l,z)-z}}+o(1),
$$
where $c_n=p/n$.

From \rf{eq:a.12} and step (e), one has
\bg{align*}
&\E\bigg|k_n(\vv{a},z)-\fr{1}{n}\sum_{i=1}^n\fr{f(\vv{a},\vv{b}_i)}{1+c_n\cdot\fr{1}{p}\sum_{l=1}^p\fr{f(\vv{a}_l,\vv{b}_i)}{\E k_n(\vv{a}_l,z)-z}}\bigg|\\
\leq&\E \bigg|k_n(\vv{a},z)-\fr{1}{n}\sum_{i=1}^n\fr{f(\vv{a},\vv{b}_i)}{1+\fr{1}{n}{\rm tr}\((K-zI)^{-1}\vv{\Sgm}_i\)}\bigg|\\
+&\E \bigg|\fr{1}{n}\sum_{i=1}^n\fr{f(\vv{a},\vv{b}_i)}{1+\fr{1}{n}{\rm 
tr}\((K-zI)^{-1}\vv{\Sgm}_i\)}-\fr{f(\vv{a},\vv{b}_i)}{1+c_n\cdot\fr{1}{p}\sum_{l=1}^p\fr{f(\vv{a}_l,\vv{b}_i)}{\E k_n(\vv{a}_l,z)-z}}\bigg|\\
\leq& o(1)+C\fr{1}{n}\sum_{i=1}^n\E\bigg|\fr{1}{p}{\rm tr}((K-zI)^{-1}\vv{\Sgm}_i)-\fr{1}{p}\sum_{l=1}^p\fr{f(\vv{a}_l,\vv{b}_i)}{\E k_n(\vv{a}_l,z)-z}\bigg|.
\end{align*}
in which
\bg{align*}
&\E\bigg|\fr{1}{p}{\rm tr}((K-zI)^{-1}\vv{\Sgm}_i)-\fr{1}{p}\sum_{l=1}^p\fr{f(\vv{a}_l,\vv{b}_i)}{\E k_n(\vv{a}_l,z)-z}\bigg|\\
=&\fr{1}{p}\sum_{l=1}^p\E\bigg|\fr{f(\vv{a}_l,\vv{b}_i)}{k_n(\vv{a}_l,z)-z}-\fr{f(\vv{a}_l,\vv{b}_i)}{\E k_n(\vv{a}_l,z)-z}\bigg|\\
\leq& C\fr{1}{p}\sum_{l=1}^p \E|k_n(\vv{a}_l,z)-\E k_n(\vv{a}_l,z)|=o(1).
\end{align*}
Note that
$$
\bigg|\fr{1}{p}\sum_{l=1}^p \fr{f(\vv{a},\vv{b})}{k_n(\vv{a},z)-z}\bigg|\leq{\rm Im}(z)^{-1}\max_n\max_{i}\|\vv{\Sgm}_i\|_{\rm op}<\ift.
$$
By the Dominant Convergence Theorem, one finds the claim holds.

(g) To prove that for any sequence of $\{\E k_n(\vv{a},z)\}$, there exists a subsequence $\{\E k_{n_k}(\vv{a},z)\}$ such that for any $\vv{a}\in\R^k$, $\E k_{n_k}(\vv{a},z)$ 
converges.

Recall that
\bg{align*}
|k_n(\vv{a}_1,z)-k_n(\vv{a}_2,z)|&=\bigg|\fr{1}{n}\sum_{i=1}^n\fr{f(\vv{a}_1,\vv{b}_i)-f(\vv{a}_2,\vv{b}_i)}{1+\vv{r}^*_i\mathcal{S}_{i,n}^{-1}\vv{r}_i}\bigg|\\
&\leq\fr{|z|}{{\rm Im}(z)}\fr{1}{n}\sum_{i=1}^n|f(\vv{a}_1,\vv{b}_i)-f(\vv{a}_2,\vv{b}_i)|\\
&=\fr{|z|}{{\rm Im}(z)}\int |f(\vv{a}_1,\vv{b})-f(\vv{a}_2,\vv{b})|\d H_n(\vv{b}).
\end{align*}
Therefore, 
\bg{align*}
|\E k_n(\vv{a}_1,z)-\E k_{n+m}(\vv{a}_1,z)|&\leq\fr{|z|}{{\rm Im}(z)}\int 
|f(\vv{a}_1,\vv{b})-f(\vv{a}_2,\vv{b})|(\d H_n(\vv{b})+\d H_{n+m}(\vv{b}))\\
&+|\E k_n(\vv{a}_2,z)-\E k_{n+m}(\vv{a}_2,z)|
\end{align*}
which suggests
\bg{align*}
&\varlimsup_{n\to\ift}\sup_{m\geq 0}|\E k_n(\vv{a}_1,z)-\E k_{n+m}(\vv{a}_1,z)|\\
\leq&\varlimsup_{n\to\ift}\sup_{m\geq 0}\fr{|z|}{{\rm Im}(z)}\int |f(\vv{a}_1,\vv{b})-f(\vv{a}_2,\vv{b})|(\d H_n(\vv{b})+\d H_{n+m}(\vv{b}))\\
+&\varlimsup_{n\to\ift}\sup_{m\geq0}|\E k_n(\vv{a}_2,z)-\E k_{n+m}(\vv{a}_2,z)|.
\end{align*}
Since 
$$
\lim_{n\to\ift}\int |f(\vv{a}_1,\vv{b})-f(\vv{a}_2,\vv{b})|\d H_n(\vv{b})\to\int |f(\vv{a}_1,\vv{b})-f(\vv{a}_2,\vv{b})|\d H(\vv{b}),
$$
one finds
\bg{align*}
\varlimsup_{n\to\ift}\sup_{m\geq 0}|\E k_n(\vv{a}_1,z)-\E k_{n+m}(\vv{a}_1,z)|\leq\fr{2|z|}{{\rm Im}(z)}\int |f(\vv{a}_1,\vv{b})-f(\vv{a}_2,\vv{b})|\d H(\vv{b}).
\end{align*}
once $\E k_n(\vv{a}_2,z)$ converges. Since the left hand side is independent of $\vv{a}_2$, let $\vv{a}_2\to\vv{a}_1$. One finds by the Dominant Convergence Theorem that the right hand side tends to $0$, which implies the convergence of $\E k_n(\vv{a}_1,z)$. 

Therefore, for any countable dense subset $D$ in $\R^k$, one can find a subsequence such that for any $\vv{a}\in D$, $\E k_{n_k}(\vv{a},z)$ converges. Even without the uniform continuity, argument above is enough to ensure the converges of $\E k_{n_k}(\vv{a},z)$ on the whole $\R^k$. Moreover, the limit is still continuous and bounded w.r.t. $\vv{a}$.

(h) To prove that $m(z)$ is a Stieltjes transform.

Recall that for any sequence $\E k_n(\vv{a},z)$ such that it converges 
on $\R^k$, as $n\to\ift$, its limit denoted by $\hat{K}(\vv{a},z)$ must satisfy
\begin{equation}\label{eq:thm1h.1}
\hat{K}(\vv{a},z)=\int \fr{f(\vv{a},\vv{b})}{1+c\int \fr{f(\vv{a},\vv{b})}{\hat{K}(\vv{a},z)-z}\d G(\vv{a})}\d H(\vv{b})
\end{equation}
with ${\rm Im}(\hat{K}(\vv{a},z))\leq 0$ and ${\rm Im}(z^{-1}\hat{K}(\vv{a},z))\leq 0$. It then follows that $m_n(z)$ converges to $m(z)$ such that
$$
m(z)=\int \fr{1}{\hat{K}(\vv{a},z)-z}\d G(\vv{a}).
$$
Let $K(\vv{a},z)=-\frac{1}{z}\hat{K}(\vv{a},z)$, and we obtain (\ref{eq:0.5}) and (\ref{eq:0.6}). 

We prove that $m(z)$ is a Stieltjes transform of certain probability 
measure by showing that $\lim_{v\to\ift}{\bf i}v m({\bf i} v)=-1$, where ${\bf i}$ stands for the imaginary unit. It then follows that the corresponding $\mu_{\vv{S}_n}$ converges weakly to that limiting probability measure.

First, we show that $\lim_{v\to\ift}\fr{1}{{\bf i}v}\hat{K}(\vv{a},{\bf i}v)=0$ for any $\vv{a}\in\R^k$. Note that for any $z\in\mathbb{C}^+$, ${\rm Im}(z^{-1}\hat{K}(\vv{a},z))\leq 0$. It then follows that
$$
{\rm Im}\(z\(1+c\int \fr{f(\vv{a},\b{b})}{\hat{K}(\vv{a},z)-z}\d G(\vv{a})\)\)\geq{\rm Im}(z)>0.
$$
Thus,
$$
\bigg|\fr{1}{z}\hat{K}(\vv{a},z)\bigg|\leq \fr{1}{{\rm Im}(z)}\int f(\vv{a},\vv{b})\d H(\vv{b})\leq \fr{K}{{\rm Im}(z)}
$$
so that
$$
\lim_{v\to\ift}\bigg|\fr{1}{{\bf i}v}\hat{K}(\vv{a},{\bf i}v)\bigg|\leq\lim_{v\to\ift}\fr{K}{v}=0.
$$

Meanwhile, since ${\rm Im}(\hat{K}(\vv{a},z))\leq 0$, one finds ${\rm Im}(\hat{K}(\vv{a},z)-z)\leq -{\rm Im}(z)<0$ so that
$$
\bigg|\fr{{\bf i}v}{\hat{K}(\vv{a},{\bf i}v)-{\bf i}v}\bigg|\leq \fr{v}{v}=1
$$
and
$$
\lim_{v\to\ift}\fr{{\bf i}v}{\hat{K}(\vv{a},{\bf i}v)-{\bf i}v}=\lim_{v\to\ift}\fr{1}{\fr{1}{{\bf i}v}\hat{K}(\vv{a},{\bf i}v)-1}=-1.
$$
Therefore, by the Dominant Convergence Theorem, one has
$$
\lim_{v\to\ift}{\bf i}v m({\bf i}v)=\lim_{v\to\ift}\int\fr{{\bf i}v}{\hat{K}(\vv{a},{\bf i}v)-{\bf i}v}\d G(\vv{a})=-1.
$$

(i) To prove the uniqueness of $\hat{K}(\vv{a},z)$.

Let $\hat{K}_1(z),\hat{K}_2(z)$ be two solutions to (\ref{eq:thm1h.1}) such that
$$
{\rm Im}(\hat{K}_1(\vv{a},z))\leq 0,~{\rm Im}(\hat{K}_2(\vv{a},z))\leq 0,~~~\hbox{and}~~{\rm Im}\(\frac{1}{z}\hat{K}_1(\vv{a},z)\)\leq 0,~{\rm Im}\(\frac{1}{z}\hat{K}_2(\vv{a},z)\)\leq 0.
$$
Then,
\bg{align*}
\hat{K}_1(\vv{a},z)-\hat{K}_2(\vv{a},z)&=\int_{\R^m}\fr{f(\vv{a},\vv{b})c\int_{\R^k}\fr{f(\vv{a'},\vv{b})(\hat{K}_1(\vv{a'},z)-\hat{K}_2(\vv{a'},z))}{(\hat{K}_2(\vv{a'},z)-z)(\hat{K}_1(\vv{a'},z)-z)}\d G(\vv{a'})}{\(1+c\int_{\R^k}\fr{f(\vv{a},\vv{b})}{\hat{K}_1(\vv{a},z)-z}\d G(\vv{a})\)\(1+c\int_{\R^k}\fr{f(\vv{a},\vv{b})}{\hat{K}_2(\vv{a},z)-z}\d G(\vv{a})\)}\d H(\vv{b})
\end{align*}
By ${\rm Im}(\hat{K}(\vv{a},z))\leq0$, one finds ${\rm Im}(\hat{K}(\vv{a},z)-z)\leq-{\rm Im}(z)<0$ so that
$$
\bigg|\(\hat{K}(\vv{a},z)-z\)^{-1}\bigg|\leq\fr{1}{{\rm Im}(z)}.
$$
Meanwhile, by using ${\rm Im}(z^{-1}\hat{K}(\vv{a},z))\leq0$, one observes that
the imaginary parts of the denominator 
$$
\fr{1}{z\(1+c\int_{\R^k}\fr{f(\vv{a},\vv{b})}{\hat{K}(\vv{a},z)-z}\d G(\vv{a})\)}=\fr{1}{z+c\int_{\R^k}\fr{f(\vv{a},\vv{b})}{\fr{1}{z}\hat{K}(\vv{a},z)-1}\d G(\vv{a})}
$$
is positive and strictly greater that ${\rm Im}(z)$. Therefore,
$$
\bigg|\fr{1}{z\(1+c\int_{\R^k}\fr{f(\vv{a},\vv{b})}{\hat{K}(\vv{a},z)-z}\d H(\vv{a})\)}\bigg|\leq\fr{1}{{\rm Im}(z)}.
$$
and
$$
\bigg|\(1+c\int_{\R^k}\fr{f(\vv{a},\vv{b})}{\hat{K}(\vv{a},z)-z}\d G(\vv{a})\)^{-1}\bigg|\leq\fr{|z|}{{\rm Im}(z)}.
$$
It then follows that
\bg{align*}
|\hat{K}_1(\vv{a},z)-\hat{K}_2(\vv{a},z)|&\leq c\fr{|z|^2}{{\rm Im}(z)^4}\int_{\R^k} |\hat{K}_1(\vv{a'},z)-\hat{K}_2(\vv{a'},z)|\(\int_{\R^m} f(\vv{a},b)f(\vv{a'},b)\d H(\vv{b})\)\d G(\vv{a'})\\
&\leq c\fr{|z|^2}{{\rm Im}(z)^4}J(\vv{a})^{1/2}\int_{\R^k}|\hat{K}_1(\vv{a'},z)-\hat{K}_2(\vv{a'},z)| J(\vv{a}')^{1/2}\d G(\vv{a'})\\
&\leq c\fr{|z|^2}{{\rm Im}(z)^4}J(\vv{a})^{1/2}\(\int_{\R^k}|\hat{K}_1(\vv{a'},z)-\hat{K}_2(\vv{a'},z)|^2 \d G(\vv{a'})\)^{1/2}\(\int_{\R^k} J(\vv{a}')\d G(\vv{a'})\)^{1/2}
\end{align*}
where 
$$
J(\vv{a})=\int_{\R^m} f(\vv{a},\vv{b})^2\d H(\vv{b}).
$$
It then follows that
$$
\int_{\R^k}|\hat{K}_1(\vv{a},z)-\hat{K}_2(\vv{a},z)|^2\d G(\vv{a})\leq c^2\fr{|z|^4}{{\rm Im}(z)^8}\(\int_{\R^k} J(\vv{a}')\d G(\vv{a'})\)^2\int_{\R^k}|\hat{K}_1(\vv{a},z)-\hat{K}_2(\vv{a},z)|^2\d G(\vv{a}).
$$
Fix the real part of $z$ and let ${\rm Im}(z)\to\ift$. One has
$$
\int_{\R^k}|\hat{K}_1(\vv{a},z)-\hat{K}_2(\vv{a},z)|^2\d G(\vv{a})=0,
$$
when ${\rm Im}(z)$ is sufficiently large, which then implies the uniqueness.

\subsection{Proof of Theorem \ref{thm:0.2}}

Notations remain the same as those in proof of Theorem \ref{thm:0.1}. Let $\tld{\vv{\Sgm}}_i=\vv{U}^*\tld{\vv{\Lmd}}_i\vv{U}$, where 
\be\lb{eq:proof2.1}
\tld{\vv{\Lmd}}_i={\rm diag}\{f(\vv{a}_l,\vv{b}_i):1\leq l\leq p\},~~~~i=1,\ldots,n.
\de
Define
\be\lb{eq:proof2.2}
\tld{K}=\fr{1}{n}\sum_{i=1}^n\fr{\tld{\vv{\Sgm}}_i}{1+\vv{r^*}_i\mathcal{S}_{i,n}^{-1}\vv{r}_i},
\de
as an analogue of $K$ in \rf{eq:a.0.7}. It suffices to prove that
$$
\E\bigg|\fr{1}{p}{\rm tr}(K-z\vv{I})^{-1}-\fr{1}{p}{\rm tr}(\tld{K}-z\vv{I})^{-1}\bigg|=o(1).
$$
Note that
$$
(\tld{K}-z\vv{I})^{-1}-(K-z\vv{I})^{-1}=\fr{1}{n}\sum_{i=1}^n\fr{(\tld{K}-z\vv{I})^{-1}(\vv{\Sgm}_i-\tld{\vv{\Sgm}}_i)(K-z\vv{I})^{-1}}{1+\vv{r^*}_i\mathcal{S}_{i,n}^{-1}\vv{r}_i}.
$$
Consequently,
\bg{align*}
\fr{1}{p}\bigg|{\rm tr}(\tld{K}-z\vv{I})^{-1}-{\rm tr}(K-z\vv{I})^{-1}\bigg|&\leq\fr{1}{np}\fr{|z|}{{\rm Im}(z)}\sum_{i=1}^n|{\rm tr}((\tld{K}-z\vv{I})^{-1}(\vv{\Sgm}_i-\tld{\vv{\Sgm}}_i)(K-z\vv{I})^{-1})|\\
&\leq\fr{1}{np}\fr{|z|}{{\rm Im}(z)^3}\sum_{i=1}^n\sum_{l=1}^p |f_n(\vv{a}_l,\vv{b}_i)-f(\vv{a}_l,\vv{b}_i)|,
\end{align*}
since $\|(K-z\vv{I})^{-1}\|\leq{\rm Im}(z)^{-1}$ and $\|(\tld{K}-z\vv{I})^{-1}\|\leq{\rm Im}(z)^{-1}$. The conclusion then follows.

\subsection{Proof of The Universality of Generalized Sample Covariance Matrices}
Let
$$
\vv{S}_n=\frac{1}{n}\vv{A}_p^{1/2}\vv{Z}_n\vv{B}_n\vv{Z}^*_n\vv{A}_p^{1/2}.
$$
According to Section 3.2, $a_0:=\sup_{p}\|\vv{A}_p\|_{op}<\infty$ and $b_0:=\sup_p\|\vv{B}_n\|_{op}<\infty$. After truncating, centring and rescaling of entries of $\vv{Z}_n$ as in \cite{ZL06}, we assume that $(Z_{ij})$ of $\vv{Z}_n$ are i.i.d. with $|Z_{ij}|\leq n^{1/4}\varepsilon_p$, $\mathbb{E}(Z_{ij})=0$, $\mathbb{E}|Z_{ij}|^2=1$ and $\mathbb{E}|Z_{ij}|^4<C$ for certain $C<\infty$, where $\varepsilon_p p^{1/4}\to0$. Let $\tilde{Z}=(\tilde{Z}_{ij})_{p\times n}$ be a Gaussian matrix with i.i.d. entries satisfying $\mathbb{E}(\tilde{Z}_{ij})=0$, $\mathbb{E}|\tilde{Z}_{ij}|^2=1$. Define
$$
\tilde{S}_n=\frac{1}{n}\vv{A}_p^{1/2}\tilde{Z}_n\vv{B}_n\tilde{Z}_n\vv{A}_p^{1/2}.
$$
We prove the universality of the generalized sample covariance matrices by showing that
\begin{itemize}
\item[(a)] $m_n(z)-\E m_n(z)\overset{\rm a.s.}{\to}0$ as $n\to\infty$ when entries of $\vv{Z}_n$ are i.i.d. standardized variables with arbitrary distribution;
\item[(b)] the difference
$$
\mathbb{E}\left(\frac{1}{p}{\rm tr}(\vv{S}_n-z\vv{I})^{-1}\right)-\mathbb{E}\left(\frac{1}{p}{\rm tr}(\tilde{S}_n-z\vv{I})^{-1}\right)\to0,
$$
as $n,p\to\infty$.
\end{itemize}

\begin{proof}[\it Proof of The Universality]
(a) The proof is split into two steps: 1) obtain a concentration inequality by using the McDiamid's inequality; 2) complete the proof by the Borel-Cantelli lemma.

\begin{lem}[McDiarmid inequality\cite{McD}]\label{lem:McD}
Let $X_1,\ldots,X_m$ be independent random vectors taking values in $\mathcal{X}$. Suppose that function $f:\mathcal{X}^m\to\R$ satisfies there exist $c_1,\ldots,c_m>0$ such that for any $\vv{x}_1,\ldots,\vv{x}_m$ and $\vv{x'}_i$ in $\mathcal{X}$,
$$
|f(\vv{x}_1,\ldots,\vv{x}_i,\ldots,\vv{x}_m)-f(\vv{x}_1,\ldots,\vv{x'}_i,\ldots,\vv{x}_m)|\leq c_i.
$$
Then, for any $\varepsilon>0$,
$$
\mathbb{P}\left(|f(\vv{X}_1,\ldots,\vv{X}_m)-\mathbb{E}f(\vv{X}_1,\ldots,\vv{X}_m)|>\varepsilon\right)\leq 2\exp\left(-\frac{2\varepsilon^2}{\sum_{i=1}^m c_i^2}\right).
$$
\end{lem}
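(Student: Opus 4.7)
The plan is to reduce the claim to an Azuma--Hoeffding-type concentration bound applied to the natural Doob martingale associated with $f(X_1,\ldots,X_m)$. Setting $\mathcal{F}_0=\{\emptyset,\Omega\}$ and $\mathcal{F}_i=\sigma(X_1,\ldots,X_i)$, I would define the martingale differences
\[
D_i=\mathbb{E}[f(X_1,\ldots,X_m)\mid\mathcal{F}_i]-\mathbb{E}[f(X_1,\ldots,X_m)\mid\mathcal{F}_{i-1}],\qquad i=1,\ldots,m,
\]
so that $f-\mathbb{E}f=\sum_{i=1}^m D_i$, and then attack the telescoping sum via a conditional Chernoff bound coordinate by coordinate.

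The first step is to establish a conditional bounded-range estimate $L_i\leq D_i\leq U_i$ with $U_i-L_i\leq c_i$ and $L_i,U_i$ being $\mathcal{F}_{i-1}$-measurable. By independence of the $X_j$'s, the conditional expectation $\mathbb{E}[f\mid\mathcal{F}_i]$ equals $g_i(X_i)$, where
\[
g_i(x)=\mathbb{E}\bigl[f(X_1,\ldots,X_{i-1},x,X_{i+1},\ldots,X_m)\bigm|\mathcal{F}_{i-1}\bigr].
\]
The bounded-differences hypothesis transfers directly from $f$ to $g_i$, yielding $\sup_{x,x'\in\mathcal{X}}|g_i(x)-g_i(x')|\leq c_i$. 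Consequently, conditional on $\mathcal{F}_{i-1}$, the centered variable $D_i=g_i(X_i)-\mathbb{E}[g_i(X_i)\mid\mathcal{F}_{i-1}]$ is confined to an interval of length at most $c_i$, and Hoeffding's lemma in its conditional form gives $\mathbb{E}[e^{\lambda D_i}\mid\mathcal{F}_{i-1}]\leq \exp(\lambda^2c_i^2/8)$ for every $\lambda\in\mathbb{R}$. Iterating this bound through the tower property produces $\mathbb{E}\exp\bigl(\lambda\sum_{i=1}^m D_i\bigr)\leq\exp\bigl(\lambda^2\sum_{i=1}^m c_i^2/8\bigr)$.

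Finally, a Chernoff-style optimization in $\lambda$, with the choice $\lambda=4\varepsilon/\sum_{i=1}^m c_i^2$, delivers the one-sided tail $\mathbb{P}(f-\mathbb{E}f>\varepsilon)\leq\exp\bigl(-2\varepsilon^2/\sum_{i=1}^m c_i^2\bigr)$, and the two-sided statement follows by applying the same argument to $-f$ and union-bounding the two halves. The one delicate point is the conditional application of Hoeffding's lemma: one must verify that the random endpoints $L_i,U_i$ depending on $X_1,\ldots,X_{i-1}$ are legitimately treated as constants inside the inner conditional expectation, which amounts to pulling the $\mathcal{F}_{i-1}$-measurable quantities outside and invoking the deterministic Hoeffding inequality pointwise. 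Apart from that, the argument is routine; since this is the classical McDiarmid inequality cited as \cite{McD}, the shortest alternative is simply to invoke that reference and skip the derivation.
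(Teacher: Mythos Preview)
Your proof is correct and is the standard Doob-martingale plus conditional Hoeffding argument for McDiarmid's inequality. The paper itself does not prove this lemma at all: it is stated with a citation to \cite{McD} and used as a black box in the proof of universality for the generalized sample covariance matrix. So there is nothing to compare against beyond noting that you have supplied the classical derivation where the paper simply invokes the reference --- exactly the alternative you mention in your last sentence.
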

Since $m_n(z)$ is a complex-valued function, we thus apply the above inequality separately to its real and imaginary parts to derive its concentration bounds. To see this, denote the columns of $\vv{Z}_n$ by $\vv{z}_1,\ldots,\vv{z}_n$ and define $\vv{Z}_{(i)}$ by replacing the $i$-th column of $\vv{Z}_n$ by $\vv{0}$. Let $\vv{e}_i$ be the $n$-dimensional basis vector with its $i$-th entry $1$ and others $0$. Then,
$$
\vv{Z}_{(i)}=\vv{Z}_n-\vv{Z}_n\vv{e}_i\vv{e^*}_i=\vv{Z}_n-\vv{z}_{(i)}\vv{e^*}_i.
$$
Define 
$$
S_{(i)}=\frac{1}{n}\vv{A}_p^{1/2}\vv{Z}_{(i)}\vv{B}_n\vv{Z^*}_{(i)}\vv{A}_p^{1/2}.
$$
Let $\vv{\alp}_i=\vv{A}_p^{1/2}\vv{z}_i$, $\vv{\bt}_i=\frac{1}{n}\vv{A}_p^{1/2}\vv{Z}_{(i)}\vv{B}_n\vv{e}_i$ and $w_i=\frac{1}{n}\vv{e^*}_i\vv{B}_n\vv{e}_i$. It then follows that
$$
\vv{S}_n=S_{(i)}+\vv{\alp}_i\vv{\bt^*}_i+\vv{\bt}_i\vv{\alp^*}_i+w_i\vv{\alp}_i\vv{\alp^*}_i.
$$
Decompose $\vv{\alp}_i\vv{\bt^*}_i+\vv{\bt}_i\vv{\alp^*}_i=\vv{\mu}_i\vv{\mu^*}_i-\vv{\nu}_i\vv{\nu^*}_i$ by using $\vv{\mu}=\frac{1}{2}(\vv{\alp}_i+\vv{\bt}_i)$ and $\vv{\nu}=\frac{1}{2}(\vv{\alp_i}-\vv{\bt}_i)$. 
Let $R_{1,(i)}=S_{(i)}+\vv{\mu}_i\vv{\mu^*}_i$ and $R_{2,(i)}=R_{1,(i)}-\vv{\nu}_i\vv{\nu^*}_i$. Then,
\begin{align*}
{\rm tr}(\vv{S}_n-z\vv{I})^{-1}-{\rm tr}(S_{(i)}-z\vv{I})^{-1}&=\left({\rm tr}(\vv{S}_n-z\vv{I})^{-1}-{\rm tr}(R_{2,(i)}-z\vv{I})^{-1}\right)\\
&+\left({\rm tr}(R_{2,(i)}-z\vv{I})^{-1}-{\rm tr}(R_{1,(i)}-z\vv{I})^{-1}\right)\\
&+\left({\rm tr}(R_{1,(i)}-z\vv{I})^{-1}-{\rm tr}(S_{(i)}-z\vv{I})^{-1}\right),
\end{align*}
in which terms of differences are all bounded by $\frac{1}{{\rm Im}(z)}$ by Lemma 2.6 in \cite{BS95}. Thus, denote $S'_n$ by replacing $\vv{z}_i$ by $\vv{z'}_i$ in $\vv{Z}_n$ to derive $\vv{Z'}_n$, one has 
$$
\bigg|\frac{1}{p}{\rm tr}(\vv{S}_n-z\vv{I})^{-1}-\frac{1}{p}{\rm tr}(S'_{n}-z\vv{I})^{-1}\bigg|\leq\frac{6}{p{\rm Im}(z)}.
$$
It then follows by the McDiarmid inequality that for any $\varepsilon>0$,
$$
\mathbb{P}\left(|m_n(z)-\mathbb{E}m_n(z)|>\varepsilon\right)\leq 4\exp\left(-\frac{p^2{\rm Im}(z)^2\varepsilon^2}{18 n}\right).
$$
Therefore, by Borel-Cantelli Lemma, we see $m_n(z)-\mathbb{E}m_n(z)\overset{\rm a.s.}{\to}0$.

(b) We next prove that the difference
$$
\mathbb{E}\left(\frac{1}{p}{\rm tr}(\vv{S}_n-z\vv{I})^{-1}\right)-\mathbb{E}\left(\frac{1}{p}{\rm tr}(\tilde{S}_n-z\vv{I})^{-1}\right)\to0,
$$
as $n,p\to\infty$. To see this, we apply the Lindeberg Principle developed in \cite{Ch06}. Denote
$$
Z_{11},\ldots,Z_{1n},Z_{2n},\ldots,Z_{pn}~~~~\hbox{by}~Y_1,\ldots,Y_{pn},
$$
and
$$
\tilde{Z}_{11},\ldots,\tilde{Z}_{1n},\tilde{Z}_{2n},\ldots,\tilde{Z}_{pn}~~~~\hbox{by}~\tilde{Y}_1,\ldots,\tilde{Y}_{pn}.
$$
Let $m=pn$. For each $i=0,1,\ldots,m$, define
$$
X_i=(Y_1,\ldots,Y_{i-1},Y_i,\tilde{Y}_{i+1},\ldots,\tilde{Y}_m)
$$
and
$$
X_i^0=(Y_1,\ldots,Y_{i-1},0,\tilde{Y}_{i+1},\ldots,\tilde{Y}_m).
$$
Let $f(\vv{x})=p^{-1}{\rm tr}(n^{-1}\vv{A}_p^{1/2}\vv{X}\vv{B}_n\vv{X^*}\vv{A}_p^{1/2}-z\vv{I})^{-1}$, where $\vv{X}$ is a $p\times n$ matrix obtained by converting the $m\times 1$ vector $\vv{x}$. Then, $f(X_m)=p^{-1}{\rm tr}(\vv{S}_n-z\vv{I})^{-1}$, $f(X_0)=p^{-1}{\rm tr}(\tilde{S}_n-z\vv{I})^{-1}$ and
$$
\mathbb{E}\left(\frac{1}{p}{\rm tr}(S_n-z\vv{I})^{-1}\right)-\mathbb{E}\left(\frac{1}{p}{\rm tr}(\tilde{S}_n-z\vv{I})^{-1}\right)=\sum_{i=1}^m 
\mathbb{E}\left[f(X_i)-f(X_{i-1})\right].
$$
Since $f$ is analytic, its third order Taylor expansion with integral reminder yield:
$$
f(X_i)=f(X_i^0)+Y_i\partial_if(X_i^0)+\frac{1}{2}Y_i^2\partial_i^2 f(X_i^0)+\frac{1}{2}Y_i^3\int_0^1(1-t)^2\partial_i^3 f(X_i(t)){\rm d}t
$$
and
$$
f(X_{i-1})=f(X_i^0)+\tilde{Y}_i\partial_if(X_i^0)+\frac{1}{2}\tilde{Y}_i^2\partial_i^2 f(X_i^0)+\frac{1}{2}\tilde{Y}_i^3\int_0^1(1-t)^2\partial_i^3 f(\tilde{X}_i(t)){\rm d}t,
$$
where $\partial_i^r$ is the $r$-fold partial derivative with respect to the $i$-th coordinate $(r=1,2,3)$ and
$$
X_i(t)=(Y_1,\ldots,Y_{i-1},tY_i,\tilde{Y}_{i+1},\ldots,\tilde{Y}_m)
$$
and
$$
\tilde{X}_i(t)=(Y_1,\ldots,Y_{i-1},t\tilde{Y}_i,\tilde{Y}_{i+1},\ldots,\tilde{Y}_m).
$$
It then follows by the Lindeberg principle that
$$
\sum_{i=1}^m \mathbb{E}\left[f(X_i)-f(X_{i-1})\right]=\frac{1}{2}\sum_{i=1}^m\mathbb{E}\left[Y_i^3\int_0^1(1-t)^2\partial_i^3 f(X_i(t)){\rm d}t-\tilde{Y}_i^3\int_0^1(1-t)^2\partial_i^3 f(\tilde{X}_i(t)){\rm d}t\right].
$$
Thus, it suffices to find reasonable bound of the $3$-fold derivatives $\partial_i^3 f(\tilde{X}_i(t))$'s. Define $P_n=(\vv{S}_n-z\vv{I})^{-1}$. From $\frac{\partial P_n}{\partial Z_{ij}}=-P_n\frac{\partial \vv{S}_n}{\partial Z_{ij}} P_n$, we have
$$
\frac{1}{p}{\rm tr}\left(\frac{\partial^3 P_n}{\partial Z_{ij}^3}\right)=\frac{6}{p}{\rm tr}\left(\frac{\partial\vv{S}_n}{\partial Z_{ij}}P_n\frac{\partial^2 \vv{S}_n}{\partial Z_{ij}^2}P_n^2\right)-\frac{6}{p}{\rm tr}\left(\frac{\partial S_n}{\partial Z_{ij}}P_n\frac{\partial \vv{S}_n}{\partial Z_{ij}}P_n\frac{\partial\vv{S}_n}{\partial Z_{ij}}P_n^2\right)-\frac{1}{p}{\rm tr}\left(\frac{\partial^3\vv{S}_n}{\partial Z_{ij}^3}P_n^2\right),
$$
where by letting $\vv{r}_j=\vv{A}_p^{1/2}\vv{Z}_n\vv{B}_n\tilde{e}_j$ and $\vv{\xi}_i=\vv{A}_p^{1/2}e_i$ for $e_i$ as well as $\tilde{e}_j$ a $p\times 1$ and $n\times 1$ unit vector, respectively, 
\begin{align*}
\frac{\partial \vv{S}_n}{\partial Z_{ij}}&=\frac{1}{n}\left(\vv{A}_p^{1/2}\vv{Z}\vv{B}_n\tilde{e}_je^*_i\vv{A}_p^{1/2}+\vv{A}_p^{1/2}e_i\tilde{e}^*_jB_nZ^*_n\vv{A}_p^{1/2}\right)=\frac{1}{n}(\vv{r}_j\vv{\xi}^*_i+\vv{\xi}_i\vv{r}^*_j);\\
\frac{\partial^2\vv{S}_n}{\partial Z_{ij}^2}&=\frac{2}{n}\vv{A}_p^{1/2}e_i\tilde{e}^*_j\vv{B}_n\tilde{e}_je^*_i\vv{A}_p^{1/2}=\frac{2}{n}b_{jj}\vv{\xi}_i\vv{\xi}^*_i;~~~~~\frac{\partial^3\vv{S}_n}{\partial Z_{ij}^3}=\vv{O},
\end{align*}
in which $\vv{O}$ is a $p\times p$ matrix with all entries equal to $0$.
It then follows that 
\begin{align*}
\frac{6}{p}{\rm tr}\left(\frac{\partial\vv{S}_n}{\partial Z_{ij}}P_n\frac{\partial^2\vv{S}_n}{\partial Z_{ij}^2}P_n^2\right)&=\frac{12b_{jj}}{pn^2}{\rm tr}(\vv{\xi^*}_i P_n^2\vv{r}_j\vv{\xi^*}_iP_n\vv{\xi}_i)+\frac{12b_{jj}}{pn^2}{\rm tr}(\vv{\xi^*}_i P_n^2\vv{\xi}_i\vv{r^*}_jP_n\vv{\xi}_i)=:\eta_{1,n}+\eta_{2,n}
\end{align*}
and
\begin{align*}
\frac{6}{p}{\rm tr}\left(\frac{\partial\vv{S}_n}{\partial Z_{ij}}P_n\frac{\partial\vv{S}_n}{\partial Z_{ij}}P_n\frac{\partial\vv{S}_n}{\partial Z_{ij}}P_n^2\right)&=\frac{1}{pn^3}{\rm tr}((\vv{r}_j\vv{\xi^*}_i+\vv{\xi}_i\vv{r^*}_j)P_n(\vv{r}_j\vv{\xi^*}_i+\vv{\xi}_i\vv{r^*}_j)P_n(\vv{r}_j\vv{\xi^*}_i+\vv{\xi}_i\vv{r^*}_j)P_n^2)\\
&=\frac{2}{pn^3}{\rm tr}((\vv{r^*}_jP_n\vv{\xi}_i)^2\vv{r^*}_jP_n^2\vv{\xi}_i)+\frac{2}{pn^3}{\rm tr}(\vv{r^*}_jP_n\vv{\xi}_i\vv{r^*}_jP_n\vv{r}_j\vv{\xi^*}_jP_n^2\vv{\xi}_i)\\
&+\frac{2}{pn^3}{\rm tr}(\vv{r^*}_jP_n\vv{r}_j\vv{\xi^*}_iP_n\vv{\xi}_i\vv{r^*}_jP_n^2\vv{\xi}_i)+\frac{2}{pn^3}{\rm tr}((\vv{\xi^*}_jP_n\vv{\xi}_i)\vv{r^*}_jP_n\vv{\xi}_i\vv{r^*}_jP_n^2\vv{r}_i)\\
&=:2(\eta_{3,n}+\eta_{4,n}+\eta_{5,n}+\eta_{6,n}).
\end{align*}
Using the similar argument developed in \cite[\S~3.3]{WP14}, it holds that
\begin{lem}
For $k=1,2$, $\mathbb{E}\|\vv{r}_j\|^{2k}\leq C_kp^k$ for some constant 
$C_k$.
\end{lem}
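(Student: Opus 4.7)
The plan is to view $\vv{r}_j=\vv{A}_p^{1/2}\vv{Z}_n\vv{B}_n\tilde e_j=\vv{A}_p^{1/2}\vv{w}_j$, where $\vv{w}_j:=\vv{Z}_n\vv{b}_{\cdot j}$ and $\vv{b}_{\cdot j}=\vv{B}_n\tilde e_j$ is the $j$-th column of $\vv{B}_n$. Since the rows of $\vv{Z}_n$ are independent and each has i.i.d.\ centred entries, the coordinates $w_{j,i}=\sum_{l}Z_{il}b_{lj}$ are independent in $i$, all with mean zero and common variance $\sigma_j^2=\|\vv{b}_{\cdot j}\|^2\leq \|\vv{B}_n\|_{\mathrm{op}}^2\leq b_0^2$. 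Writing $\|\vv{r}_j\|^2=\vv{w}_j^*\vv{A}_p\vv{w}_j$, the problem reduces to estimating the first two moments of a quadratic form in an independent-entry vector.

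For $k=1$, the expectation is computed directly by linearity. Using $\mathbb{E}[w_{j,i}w_{j,i'}^*]=\sigma_j^2\delta_{ii'}$, one gets
\[
\mathbb{E}\|\vv{r}_j\|^2=\sigma_j^2\,{\rm tr}(\vv{A}_p)\leq b_0^2\,a_0\,p,
\]
so $C_1=a_0b_0^2$ works. The computation only needs the second moments and the operator-norm bounds on $\vv{A}_p$ and $\vv{B}_n$.

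For $k=2$, the plan is to apply Lemma B.26 of \cite{BS10} (the standard concentration bound for quadratic forms in independent entries) to $\vv{w}_j$, after first checking that the fourth moments of its entries are uniformly bounded. Expanding $\mathbb{E}|w_{j,i}|^4=\sum_{l_1,l_2,l_3,l_4}b_{l_1j}b_{l_2j}^*b_{l_3j}b_{l_4j}^*\mathbb{E}[Z_{il_1}Z_{il_2}^*Z_{il_3}Z_{il_4}^*]$ and using independence collapses the sum to the matched-index pairs, giving
\[
\mathbb{E}|w_{j,i}|^4\leq C\bigl(\|\vv{b}_{\cdot j}\|^4+\mathbb{E}|Z_{11}|^4\textstyle\sum_l|b_{lj}|^4\bigr)\leq C'b_0^4,
\]
where in the last step $\sum_l|b_{lj}|^4\leq(\max_l|b_{lj}|^2)\|\vv{b}_{\cdot j}\|^2\leq b_0^4$ and $\mathbb{E}|Z_{11}|^4$ is uniformly bounded by the truncation hypothesis. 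Lemma B.26 then yields
\[
\mathbb{E}\bigl|\vv{w}_j^*\vv{A}_p\vv{w}_j-\sigma_j^2{\rm tr}(\vv{A}_p)\bigr|^2\leq C\,\mathbb{E}|w_{j,1}|^4\,{\rm tr}(\vv{A}_p^2)\leq C''a_0^2\,p.
\]
Combining this variance bound with the squared mean estimate obtained in the $k=1$ step via the inequality $\mathbb{E}X^2\leq 2(\mathbb{E}X)^2+2\mathrm{Var}(X)$ gives
\[
\mathbb{E}\|\vv{r}_j\|^4\leq 2(a_0b_0^2)^2p^2+2C''a_0^2p=O(p^2),
\]
which provides the desired constant $C_2$.

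There is no real obstacle here beyond bookkeeping: the key point is merely to observe that the structure of $\vv{B}_n\tilde e_j$ provides an independent-entry vector with uniformly controlled second and fourth coordinatewise moments, after which the standard quadratic-form concentration inequality delivers both bounds at once. The argument is precisely the one already invoked for the bounds on $\mathbb{E}|\vv{x}_i^*\vv{B}\vv{x}_i-{\rm tr}(\vv{B}\vv{\Sigma}_i)|^2$ used earlier in Sections~3.1--3.3, so one may cite Lemma B.26 of \cite{BS10} and conclude briskly.
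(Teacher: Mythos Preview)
Your proof is correct. The paper itself does not give a proof of this lemma but merely cites the argument in \cite[\S~3.3]{WP14}; your approach---writing $\vv{r}_j=\vv{A}_p^{1/2}\vv{w}_j$ with $\vv{w}_j=\vv{Z}_n\vv{B}_n\tilde e_j$ having i.i.d.\ coordinates, computing the mean directly for $k=1$, and for $k=2$ bounding the coordinatewise fourth moment and invoking Lemma~B.26 of \cite{BS10}---is exactly the standard route and fills in the details that the paper leaves to the reference.
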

Note that for $k=1,2$, $|\eta_{k,n}|\leq\frac{12b_0}{pn^2}\left(\frac{a_0}{{\rm Im}(z)}\right)^3\|\vv{r}_j\|$. By H\"{o}lder's inequality, it holds that
$$
\mathbb{E}|Z_{ij}^3\eta_{k,n}|\leq\frac{M}{pn^2}(\mathbb{E}|Z_{ij}|^4)^{\frac{3}{4}}\(\mathbb{E}\|\vv{r}_j\|^4\)^{1/4}\leq\frac{M}{n^2p^{1/2}}.
$$
Meanwhile, for $k=3,4,5,6$, $|\eta_{k,n}|\leq\frac{1}{pn^3}\frac{a_0^3}{{\rm Im}(z)^4}\|\vv{r}_j\|^3$. Since $|Z_{ij}|\leq n^{1/4}\varepsilon_p$, it holds that
\begin{align*}
\mathbb{E}|Z_{ij}^3\eta_{k,n}|&\leq\frac{M}{n^3p}\mathbb{E}\left[|Z_{ij}|^3\|\vv{r}_j\|^3\right]\\
&\leq\frac{M}{n^3p}(\mathbb{E}|Z_{ij}|^{12})^{\frac{1}{4}}(\mathbb{E}\|\vv{r}_j\|^4)^{\frac{3}{4}}\leq\frac{Mp^{\frac{1}{2}}}{n^{\frac{9}{4}}}\varepsilon_p^3.
\end{align*}
Similar conclusion holds for $\tilde{Z}_{ij}$'s due to Gaussianity. Combining discussions above, we conclude that
$$
\sum_{i=1}^m \int_0^1 (1-t)^2\left[\mathbb{E}\bigg|Y_i^3\partial^3_i f(X_i(t))\bigg|-\mathbb{E}\bigg|\tilde{Y}_i^3\partial^3_i f(\tilde{X}_i(t))\bigg|\right]{\rm d} t\leq M\max\{p^{1/4}\varepsilon_p^3,n^{-1/2}\}\to0.
$$
Therefore, we complete the proof of step (b).
\end{proof}

\subsection{Proof of Proposition \ref{thm:3.4.1}}

We are going to complete the proof in two steps:
\begin{itemize}
\item[(a)] By the method developed in Section 2, we obtain the limiting equations for $\tilde{m}_T(z)$; 

\item[(b)] Using (\ref{eq:3.4.8}), we obtain the limiting equations for $m_T(z)$.
\end{itemize}

(a) Recall that $\tilde{m}_T(z)$ is the Stieltjes transform of ESDs for 
$$
\tilde{S}_T=\frac{1}{p}\sum_{i=1}^p \vv{X}_{(i)}\vv{X^*}_{(i)},
$$
where $\vv{X}_{(i)}$'s are independent and made by $T$ consecutive observations of coordinate processes. Moreover, for any $i=1,\ldots,p$, $(X_{i,t})$ is a linear time series satisfying
$$
X_{i,t}=\sum_{j=0}^\infty a_{i,j}Z_{i,t-j},
$$
where $a_{i,j}=\psi_j(\vv{a}_i)$ for $j=0,1,\ldots$. Then, $\vv{X}_{(i)}=(X_{i,1},\ldots,X_{i,T})'$ and $\vv{\Gamma}_{i,T}=\mathbb{E}(\vv{X}_{(i)}\vv{X^*}_{(i)})=(\gamma_{i}(k-j))_{1\leq k,j\leq T}$ where $\gamma_{i}(\cdot)$ is the auto-covariance function of $(X_{i,t})$ with $\gamma_{i}(h)=\mathbb{E}(X_{i,t}^*X_{i,t+h})$, $h\in\mathbb{Z}$. By Lemma \ref{lem:3.4.1}, 
there exists a non-negative definite Hermitian circulant matrix $\vv{C}_{i,T}$ such that
\begin{align*}
\frac{1}{T} {\rm tr}\left((\vv{\Gamma}_{i,T}-\vv{C}_{i,T})(\vv{\Gamma}_{i,T}-\vv{C}_{i,T})^*\right)&\leq  2\left(\sum_{k=T}^\ift |\gamma_{i}(k)|+|\gamma_{i}(-k)|\right)\\
&+2\sum_{k=0}^{T}\frac{k}{T}(|\gamma_{i}(k)|^2+|\gm_{i}(-k)|^2)=o(1)
\end{align*}
and  $\vv{C}_{i,T}$ has eigenvalues $\{2\pi f_i(2\pi l/T):l=0,\ldots,T-1\}$, where $f_i$ is the spectral density function of the $i$-th coordinate process defined by
$$
f_i(\lambda)=\frac{1}{2\pi}\sum_{h=-\infty}^{\infty} e^{{\bf i}\lambda h}\gamma_i(h).
$$

On one hand, from (\ref{eq:3.4.13}), we know that the spectral density function of $(X_{i,t})$ now becomes
$\frac{1}{2\pi}|h(\vv{a}_i,\lambda)|$ so that eigenvalues of $\vv{C}_{i,T}$ becomes $\{|h(\vv{a}_i,2\pi l/T)|:l=0,\ldots,T-1\}$. On the other hand, by (\ref{eq:3.4.2}), it is easy to see from (\ref{eq:3.4.10}) that
\begin{equation}\label{eq:4.3.4.2}
\max_{1\leq i\leq p}\frac{1}{T} {\rm tr}\left((\vv{\Gamma}_{i,T}-\vv{C}_{i,T})(\vv{\Gamma}_{i,T}-\vv{C}_{i,T})^*\right)=o(1).
\end{equation}

Next, we verify the moment condition (4) in Assumption \ref{assm:1}. First, we show that there exists a positive constant $C$ such that  
\begin{equation}\label{eq:4.3.4.1}
\mathbb{E}\left[\vv{X^*}_{(i)}B\vv{X}_{(i)}-{\rm tr}(B\vv{\Gamma}_{i,T})\right]^2\leq C\|B\|_{op}^2\cdot T.
\end{equation}
To see this, let $B=(B_{st})$ be a $T\times T$ matrix bounded in norm. Consider the variance of quadratic form
\bg{align*}
&\E\(\sum_{s,t=1}^T B_{st}\(X_{i,s}X_{i,t}^*-\gm_i(s-t)\)\)^2\\
=&\sum_{s,t=1}^T\sum_{s',t'=1}^T B_{st}B_{s't'}\E\l[\(X_{i,s}X_{i,t}^*-\gm_i(s-t)\)\(X_{i,s'}X_{i,t'}^*-\gm_i(s'-t')\)\r]
\end{align*}
By the MA($\ift$) representation of stationary linear sequence, for $t\geq s$, it holds that
$$
X_{i,t}^*X_{i,s}-\gm_{i}(s-t)=\sum_{j=0}^\ift a_{i,j}a_{i,j+t-s}^*(|Z_{i,s-j}|^2-1)+\sum_{k\neq j+t-s} a_{i,k}a_{i,j} Z_{i,t-k}^*Z_{i,s-j}.
$$
Then, for $t\geq s$, $t'\geq s'$ and $s\geq s'$, one has
\bg{align*}
\E\((X_{i,t}^*X_{i,s}-\gm_{i}(s-t))(X_{i,t'}^*X_{i,s'}-\gm_{i}(s'-t'))\)&=(\E|Z_{11}|^4-1)\sum_{j=0}^\ift a_{i,j}a_{i,j+s-s'}^*a_{i,j+t'-s'}a_{i,j+t-s'}^*\\
&+\sum_{j=0}^\ift a_{i,j}a_{i,j+s-s'}^*a_{i,j+t'-s'}a_{i,j+t-s'}^*\\
&+\sum_{j=0}^\ift a_{i,j}a_{i,j+s-s'}a_{i,j+t'-s'}^*a_{i,j+t-s'}^*\\
&+\gm_{i}(t-t')\gm_{i}(s-s')+\gm_{i}(t-s')\gm_{i}(t'-s).
\end{align*}
Similar argument implies that
\bg{align*}
\E\((X_{i,t}^*X_{i,s}-\gm_i(s-t))(X_{i,t'}^*X_{i,s'}-\gm_i(s'-t'))\)&=(\E|Z_{11}|^4-1)\sum_{j=0}^\ift a_{i,j}a_{i,j+\mu_2-\mu_1}^*a_{i,j+\mu_3-\mu_1}a_{i,j+\mu_4-\mu_1}^*\\
&+\sum_{j=0}^\ift a_{i,j}a_{i,j+s-s'}^*a_{i,j+t'-s'}a_{i,j+t-s'}^*\\
&+\sum_{j=0}^\ift a_{i,j}a_{i,j+s-s'}a_{i,j+t'-s'}^*a_{i,j+t-s'}^*\\
&+\gm_{i}(t-t')\gm_{i}(s-s')+\gm_{i}(t-s')\gm_{i}(t'-s).
\end{align*}
where $\mu_t$ is $t$-th smallest value among $t,s,t',s'$. Therefore, for any matrix $B$ bounded in norm, one one hand,
\begin{align*}
&\sum_{s,t=1}^T\sum_{s',t'=1}^T B_{st}B_{s't'}(\gm_{i}(t-t')\gm_{i}(s-s')+\gm_{i}(t-s')\gm_{i}(t'-s))\\
=&{\rm tr}\(\vv{\Gm}_{i,T} B\vv{\Gm}_{i,T} (B+B')\)\leq 2\|B\|_{op}^2\|\vv{\Gm}_{i,T}\|^2_{op}\cdot T.
\end{align*}
According to Lemma 6 in Section 4.2 in \cite{G06}, if $\vv{\Gm}_{i,T}$ is a Hermitian Toeplitz matrix, then
$$
\|\vv{\Gm}_{i,T}\|_{op}\leq 2\sum_{k=-\ift}^\ift|\gm_{i}(k)|.
$$
Under our assumption $\sum_{k=0}^\ift |a_k|<\ift$, the right hand side must be convergent uniformly so that we are able to bound $\|\vv{\Gm}_{i,T}\|$ uniformly in norm. On the other hand,
\begin{align*}
&(\E|Z_{11}|^4-1)\sum_{s,t=1}^T\sum_{s',t'=1}^T B_{st}B_{s't'}\sum_{j=0}^\ift a_ja_{j+\mu_2-\mu_1}a_{j+\mu_3-\mu_1}a_{j+\mu_4-\mu_1}\\
\leq&|\E|Z_{11}|^4|\|B\|_{op}^2\(\sum_{j=0}^\ift |a_j|\)^4\cdot T
\end{align*}
Similarly, we have
\begin{align*}
\bigg|\sum_{j=0}^\ift a_{i,j}a_{i,j+s-s'}^*a_{i,j+t'-s'}a_{i,j+t-s'}^*\bigg|\leq&|\E|Z_{11}|^4|\|B\|_{op}^2\(\sum_{j=0}^\ift |a_j|\)^4\cdot T\\
\bigg|\sum_{j=0}^\ift a_{i,j}a_{i,j+s-s'}a_{i,j+t'-s'}^*a_{i,j+t-s'}^*\bigg|\leq&|\E|Z_{11}|^4|\|B\|_{op}^2\(\sum_{j=0}^\ift |a_j|\)^4\cdot T.
\end{align*}
Thus, (\ref{eq:4.3.4.1}) holds. Combining (\ref{eq:4.3.4.1}) and (\ref{eq:4.3.4.2}), with the Remark 2.2., we finish verifying the moment condition. 

Now, let $\vv{\Sigma}_{i}=\vv{C}_{i,T}$ and $f(\vv{b},\vv{a})=|h(\vv{b},\vv{a})|^2$ with $\vv{b}_{l,T}=2\pi (l-1)/T$ and $\vv{a}_{i,p}=\vv{a}_{i}$. By Theorem \ref{thm:0.1}, one has
\begin{equation}\label{eq:4.3.4.3}
\begin{split}
\tilde{m}(z)&=\int_0^1 \frac{1}{K(2\pi s,z)-z}{\rm d}s;\\
K(2\pi s,z)&=\int_0^1 \frac{|h(\vv{a},2\pi s)|^2}{1+c^{-1}\int_0^1 \frac{|h(\vv{a},2\pi s)|^2}{K(2\pi s,z)-z}{\rm d}s}{\rm d}G(\vv{a}).
\end{split}
\end{equation}

(b) From (\ref{eq:4.3.4.3}), it holds that
$$
K(2\pi s,z)-z=\int \fr{|h(\vv{a},2\pi s)|^2}{1+c^{-1}\int_0^{1}\fr{|h(\vv{a},2\pi s)|^2}{K(2\pi s,z)-z}{\rm d}s}{\rm d} G(\vv{a})-z
$$
so that
$$
1=c-c\int \fr{1}{1+c^{-1}\int_0^{1}\fr{|h(\vv{a},2\pi s)|^2}{K(2\pi s,z)-z}{\rm d}s}{\rm d} G(\vv{a})-z\int_0^{1}\frac{1}{K(2\pi s,z)-z}{\rm d}s.
$$
Thus,
$$
1-c+z\tilde{m}(z)=-c\int \fr{1}{1+c^{-1}\int_0^{1}\fr{|h(\vv{a},2\pi s)|^2}{K(2\pi s,z)-z}{\rm d}s}{\rm d} G(\vv{a}).
$$
Note that
$$
\int\fr{1}{1+c^{-1}\int_0^{1}\fr{|h(\vv{a},2\pi s)|^2}{K(2\pi s,z/c)-z/c}{\rm d}s}{\rm d} G(\vv{a})=\int\fr{1}{1+\int_0^{1}\fr{|h(\vv{a},2\pi s)|^2}{cK(2\pi s,z/c)-z}{\rm d}s}{\rm d} G(\vv{a})
$$

From (\ref{eq:3.4.8}),
\begin{align*}
m(z)&=\frac{1-c+c^{-1}z\tilde{m}(z/c)}{cz}\\
&=-\frac{1}{z}\int\fr{1}{1+\int_0^{1}\frac{|h(\vv{a},2\pi s)|^2}{cK(2\pi s,z/c)-z}{\rm d}s}{\rm d} G(\vv{a})\\
&=\int \frac{1}{-z+\int_0^1\frac{|h(\vv{a},2\pi s)|^2}{-cK(2\pi s,z/c)/z+1}{\rm d}s}{\rm d} G(\vv{a})
\end{align*}
Let $K_0(\lambda,z)=-\frac{1}{z}K(\lambda,z/c)$. Then, 
\begin{align*}
m(z)=\int \frac{1}{-z+\int_0^1\frac{|h(\vv{a},2\pi s)|^2}{cK_0(2\pi s,z)+1}{\rm d}s}{\rm d} G(\vv{a})
\end{align*}
Moreover, by (\ref{eq:4.3.4.3}),
\begin{align*}
K_0(2\pi s,z)=-\frac{1}{z}K(2\pi s,z/c)&=-\frac{1}{z}\int\frac{|h(\vv{a},2\pi s)|^2}{1+c^{-1}\int_0^{1}\frac{|h(\vv{a},2\pi s)|^2}{K(2\pi s,z/c)-z/c}{\rm d}s}{\rm d}G(\vv{a})\\
&=\int\frac{|h(\vv{a},2\pi s)|^2}{-z+\int_0^{1}\frac{|h(\vv{a},2\pi s)|^2}{cK_0(2\pi s,z)+1}{\rm d}s}{\rm d}G(\vv{a}).
\end{align*}
Finally, by changing of variable $\lambda=2\pi s$, we obtain (\ref{eq:3.4.14}) and (\ref{eq:3.4.15}). The proof is completed.

\subsection{Proof of Theorem \ref{thm:3.4.2}}

The proof is in general the same as Theorem \ref{thm:3.4.1}. We only need 
to verify that the auto-covariance function $\gamma_i(h)$'s are uniformly 
absolutely summable. By using the argument in Chapter 3 Section 3 {\bf 17} in \red{Stein and Rami Shakarchi (2007)}\cite{Stein07}, if the spectral 
density satisfies the Lipschitz condition, then for each $i=1,\ldots, p$the corresponding auto-covariance function $\gamma_i(\cdot)$ satisfies $\sum_{|h|>2^{m-1}}|\gamma_i(h)|^2\leq\frac{K^2\pi^2}{2^{2m}}$ and $\sum_{|h|>2^{m-1}}|\gamma_i(h)|\leq\frac{\sqrt{2} K\pi}{\sqrt{2}-1}2^{-\frac{m}{2}}$ so that $\gamma_i(\cdot)$ is 
absolutely summable. Note that equation in the right hand side is independent of $i$. The assumption (ii) ensures that
$$
\sum_{h=-\infty}^\infty |\gamma_{i}(h)|<\infty.
$$
and for $r=1,2$,
$$
\max_{1\leq i\leq p} \sum_{|h|>2^{m-1}}|\gamma_i(h)|^{r}=o(1).
$$
Thus, with the help of the Lipschitz condition, the approximation Lemma \ref{lem:3.4.1} still holds. Moreover, we still have that there exists a sequence of non-negative definite Hermitian circulant matrices $\vv{C}_{1,T},\ldots,\vv{C}_{p,T}$ such that
\begin{itemize}
\item[(i)] for any $i=1,\ldots,p$, $\vv{\Gamma}_{i,T}$ and $\vv{C}_{i,T}$ are asymptotically iso-spectral in the sense that
$$
\sup_{1\leq i\leq p} \frac{1}{p} {\rm tr}\left((\vv{\Gamma}_{i,T}-\vv{C}_{i,T})(\vv{\Gamma}_{i,T}-\vv{C}_{i,T})^*\right)=o(1)
$$
as $T\to\infty$ and $p/T=p(T)/T\to c\in(0,\infty)$;
\item[(ii)] for any $i=1,\ldots,p$, $\vv{C}_{i,T}$ has its eigenvalues $\{2\pi f_i(2\pi l/T):l=0,\ldots,T-1\}$, where $f_i$ is the spectral density function of the $i$-th coordinate process.
\end{itemize} 

Since the moment condition in Assumption \ref{assm:1} can be ensured by assumption (iii) in Theorem \ref{thm:3.4.2}, the rest of proof is totally the same as that for Theorem \ref{thm:3.4.1} and thus omitted.

\subsection{Proof of Theorem \ref{thm:3.5.2}}

First, we remove terms of drift process by using assumption (1) and the following lemma.
\begin{lem}[Zheng et al. (2011)\cite{Zh11} Lemma 1]
Suppose that for each $p$, $\vv{v}_l^{(p)}=(v_{l}^{(p,1)},\ldots,v_l^{(p,p)})'$ and $\vv{w}_l^{(p)}=(w_{l}^{(p,1)},\ldots,w_l^{(p,p)})'$, $l=1,\ldots,n$, are all $p$-dimensional vectors. Define
$$
\tilde{S}_n=\sum_{l=1}^n (\vv{v}_l^{(p)}+\vv{w}_l^{(p)})(\vv{v}_l^{(p)}+\vv{w}_l^{(p)})',~~~~~S_n=\sum_{l=1}^n \vv{w}_l^{(p)}\vv{w}_l^{(p)'}.
$$
If the following conditions are satisfied:
\begin{itemize}
\item[(i)] $n=n(p)$ with $\lim_{p\to\infty}p/n=y>0$;

\item[(ii)] there exists a sequence $\varepsilon=o(1/\sqrt{p})$ such that for all $p$ and all $l$, all the entries of $\vv{l}^{(p)}$ are bounded 
by $\varepsilon$ in absolute value;

\item[(iii)] $\limsup_{p\to\infty}{\rm tr}(S_n)/p<\infty$ almost surely.
\end{itemize}
Then, $L(F^{\tilde{S}_n},F^{S_n})\to0$ almost surely, where for any two probability distribution functions $F$ and $G$, $L(F,G)$ denotes the Levy distance between them.
\end{lem}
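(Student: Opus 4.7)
The plan is to control the Levy distance via the Wasserstein-$1$ distance between empirical spectral measures: for discrete measures with equal atom weights and atoms sorted in the same order, this distance equals the average sorted eigenvalue gap, and convergence in $d_W$ implies weak convergence, hence convergence in the Levy metric. It therefore suffices to prove
$$\frac1p\sum_{k=1}^{p}\bigl|\lambda_k(\tilde S_n)-\lambda_k(S_n)\bigr|\xrightarrow{\text{a.s.}}0,$$
with eigenvalues sorted in the same order on both sides. The key structural observation is that $\tilde S_n=UU'$ and $S_n=WW'$ for $U:=V+W$, where $V$ and $W$ are the $p\times n$ matrices with columns $\vv v_l^{(p)}$ and $\vv w_l^{(p)}$, so the eigenvalue gap can be attacked through singular values of the rectangular matrices $U,W$ rather than through the Hermitian perturbation $\tilde S_n-S_n$ itself.

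For each $k\le\min(p,n)$ one has $\lambda_k(\tilde S_n)=s_k(U)^2$ and $\lambda_k(S_n)=s_k(W)^2$, so
$$|\lambda_k(\tilde S_n)-\lambda_k(S_n)|=|s_k(U)-s_k(W)|\cdot\bigl(s_k(U)+s_k(W)\bigr).$$
Mirsky's singular value perturbation inequality (the Hoffman--Wielandt inequality for singular values) gives $\sum_k(s_k(U)-s_k(W))^2\le\|U-W\|_F^2=\|V\|_F^2$, and Cauchy--Schwarz applied to the product above yields
$$\sum_k|\lambda_k(\tilde S_n)-\lambda_k(S_n)|\le\|V\|_F\cdot\Bigl(\sum_k(s_k(U)+s_k(W))^2\Bigr)^{1/2}.$$

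The first factor is deterministic: all $np$ entries of $V$ are bounded by $\varepsilon=o(1/\sqrt p)$, so $\|V\|_F^2\le np\varepsilon^2=o(n)=o(p)$ using $p/n\to y$, hence $\|V\|_F=o(\sqrt p)$. For the second factor I use $(a+b)^2\le2(a^2+b^2)$ to get the bound $2(\operatorname{tr}\tilde S_n+\operatorname{tr}S_n)$; writing $\operatorname{tr}\tilde S_n=\|V+W\|_F^2\le2\|V\|_F^2+2\operatorname{tr}S_n$ and invoking hypothesis (iii), which ensures $\operatorname{tr}S_n=O(p)$ almost surely, the second factor is $O(\sqrt p)$ a.s. Multiplying the two estimates,
$$\sum_k|\lambda_k(\tilde S_n)-\lambda_k(S_n)|\le o(\sqrt p)\cdot O(\sqrt p)=o(p)\quad\text{a.s.,}$$
and dividing by $p$ delivers $d_W(F^{\tilde S_n},F^{S_n})\to0$ and then $L(F^{\tilde S_n},F^{S_n})\to0$ almost surely.

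The only conceptual obstacle is that a direct Hoffman--Wielandt bound on the Hermitian difference $\tilde S_n-S_n=VV'+VW'+WV'$ is too loose: the cross terms $VW'+WV'$ contribute to $\|\tilde S_n-S_n\|_F^2$ a piece of order $\|V'V\|_{\mathrm{op}}\operatorname{tr}(W'W)$, which is only $o(p^2)$ rather than $o(p)$, and squaring then forces $d_W$ to at best $o(\sqrt p)$, not $o(1)$. Passing to singular values of the rectangular factors $U$ and $W$ bypasses this loss: the smallness of $V$ is fed linearly into $(s_k(U)-s_k(W))$ while the unavoidable $O(\sqrt p)$ scale of $s_k(U)+s_k(W)$ is absorbed cleanly by the trace bound (iii), with no need for a separate concentration argument.
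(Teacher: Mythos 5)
Your proof is correct. The paper itself does not prove this lemma---it simply cites Zheng et al.\ (2011), Lemma~1---but your argument is essentially the standard one underlying that result (cf.\ Corollary~A.42 in Bai--Silverstein): passing to the rectangular factors $U=V+W$ and $W$, applying Mirsky's singular-value inequality together with Cauchy--Schwarz to get
\[
\sum_k\bigl|\lambda_k(\tilde S_n)-\lambda_k(S_n)\bigr|\le\|V\|_F\Bigl(2\operatorname{tr}\tilde S_n+2\operatorname{tr}S_n\Bigr)^{1/2},
\]
then feeding in $\|V\|_F^2\le np\varepsilon^2=o(p)$ from (i)--(ii) and the trace bound from (iii), and finally converting the $L^1$ distance between the ESDs into a Levy bound via $L(F,G)^2\le\int|F-G|$. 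Your remark about why a direct Hoffman--Wielandt bound on the Hermitian difference $\tilde S_n-S_n$ is too weak is exactly the point that motivates the singular-value route; this matches the logic of the cited lemma rather than offering a genuinely new argument.
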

Thus, without loss of generality, we remove the drift process by assuming 
that $\vv{\mu}_t\equiv \vv{0}$. It remains to verify conditions in \ref{thm:3.5.1}. To see this, let
$$
G^{(p)}_l(t)=\int_0^t \gamma_{l,u}^{(p)2}{\rm d} u
$$
for $l=1,\ldots,p$. Then, $w_{i,l}^{n}=n[G_l^{(p)}(\tau_{i,n})-G_l^{(p)}(\tau_{i-1,n})]$ for $i=1,\ldots,n$ and $l=1,\ldots,p$. Moreover, $w_{i,l}^{(n)}$'s are uniformly bounded by our assumption and thus assumption (1) in Theorem \ref{thm:3.5.1} holds. 

Let $w(s,r)=\gamma(s,\Theta_r)^2v_r$. Define
$$
\tilde{w}_{i,l}^n= n\int_{\tau_{i-1,n}}^{\tau_{i,n}}\gamma(l/p,u)^2{\rm 
d}u.
$$
for $i=1,\ldots,n$ and $l=1,\ldots,p$. Then, we have
\begin{align*}
|w_{i,l}^n-\tilde{w}_{i,l}^n|&\leq n\left|\int_{\tau_{i-1,n}}^{\tau_{i,n}} \gamma^{(p)2}_{l,u}-\gamma(l/p,u)^2{\rm d} u\right|\\
&\leq 2\kappa_0 n\int_{\tau_{i-1,n}}^{\tau_{i,n}}|\gamma_{l,u}^{(p)}-\gamma(l/p,u)|{\rm d}u.
\end{align*}
It follows that
\begin{align*}
\frac{1}{np}\sum_{l=1}^p\sum_{i=1}^n |w_{i,l}^n-\tilde{w}_{i,l}^n|&\leq \frac{2\kappa_0}{p}\sum_{l=1}^p \int_0^1|\gamma_{l,u}^{(p)}-\gamma(l/p,u)|{\rm d}u\\
&\leq2\kappa_0\sum_{l=1}^p \int_{\frac{l-1}{p}}^{\frac{l}{p}}\int_0^1|\gamma_{l,u}^{(p)}-\gamma(s,u)|{\rm d}u{\rm d}s\\
&+2\kappa_0\sum_{l=1}^p \int_{\frac{l-1}{p}}^{\frac{l}{p}}\int_0^1|\gamma(s,u)-\gamma(l/p,u)|{\rm d}u{\rm d}s
\end{align*}
Thus, due to the continuity of $\gamma(s,r)$ and (\ref{eq:3.5.10}),
$$
\lim_{n,p\to\infty}\frac{1}{np}\sum_{l=1}^p\sum_{i=1}^n |w_{i,l}^n-\tilde{w}_{i,l}^n|=0.
$$
For $\tilde{w}_{i,l}^n$ defined above, informally, under assumption (3), we have by changing of variable that
$$
\tilde{w}_{i,l}^n=\gamma(l/p,\Theta_{\frac{i}{n}})^2v_\frac{i}{n}+o_{\rm a.s.}(1)
$$
since $\tau_{[ns],n}\to\Theta_s=\int_0^s v_r{\rm d}r$ almost surely as $n\to\infty$ so that $\tau_{i,n}-\tau_{i-1,n}=n^{-1}v_{i/n}+o_{\rm a.s.}(1)$. Mathematically, we have
\begin{align*}
|\tilde{w}_{i,l}^{n}-\gamma(l/p,\tau_{i/n})^2v_{\frac{i}{n}}|&\leq \left|n\int_{\tau_{i-1,n}}^{\tau_{i,n}}\gamma(l/p,u)^2-\gamma(l/p,\tau_{i/n})^2{\rm d}u+\gamma(l/p,\tau_{i/n})^2(n\Delta\tau_{i,n}-v_{i/n})\right|\\
&\leq 2\kappa_0\int_{\tau_{i-1,n}}^{\tau_{i,n}}|\gamma(l/p,u)-\gamma(l/p,\tau_{i,n})|{\rm d}u+2\kappa_0^2|n\Delta\tau_{i,n}-v_{i/n}|
\end{align*}
Similarly, by assumption (2) and (3), we find
$$
\lim_{n,p\to\infty}\frac{1}{np}\sum_{l=1}^p\sum_{i=1}^n |\tilde{w}_{i,l}^n-\gamma(l/p,\tau_{i,n})^2v_{\frac{i}{n}}|=0.
$$
Note that, with probability one,
$$
\lim_{n,p\to\infty}\sum_{l=1}^p\sum_{i=1}^n\gamma(l/p,\tau_{i,n})^2v_{\frac{i}{n}}I_{\left(\frac{l-1}{p},\frac{l}{p}\right]\times \left(\frac{i-1}{n},\frac{i}{n}\right]}(s,r)=\gamma(s,\Theta_r)^2v_{r},
$$
our conclusion then follows, which completes the proof.

\subsection{Proofs in Section \ref{ssec:mix}}
\label{ssec:mixproof}
\begin{proof}[\it Proof of Theorem \ref{thm:mixture}]
Without loss of generality, we assume that component means are centred, i.e., $\vv{\mu}_1=\cdots=\vv{\mu}_M=\vv{0}$. In fact, let $\vv{y}_{jn}=\vv{x}_{jn}-\vv{\mu}_{I_j}$, $j=1,\ldots,n$. Define
$$
\tilde{\vv{S}}_{n}:=\frac{1}{n}\sum_{j=1}^n \vv{y}_{jn}\vv{y}^*_{jn}.
$$
Note that ${\rm rank}(\vv{S}_n-\tilde{\vv{S}}_n)\leq 3M$. By the rank inequality (Theorem A.43 in \cite{BS10}), $\vv{S}_n$ and $\tilde{\vv{S}}_n$ have the same LSD.

Let $m_n(z)=\frac{1}{p}{\rm tr}(\vv{S}_n-z\vv{I})^{-1}$. It suffices to prove that conditioned on $\{I_{1n},\ldots,I_{nn}:n\geq1\}$, with probability one, $m_n(z)$ converges weakly to $m(z)$ obtained in the theorem. Indeed, let $A=\{m_n(z)\not\to m(z)~\hbox{for certain $z\in\mathbb{C}^+$}\}$. Then $\P(A|\{I_{1n},\ldots,I_{nn}:n\geq1\})=0$ implies $\P(A)=0$.

Conditioned on $\{I_{1n},\ldots,I_{nn}:n\geq1\}$, we have
$$
\vv{x}_{jn}=\vv{\Sigma}_{I_j,p}^{1/2}\vv{z}_{j},~~~~j=1,\ldots,n,
$$
can be viewed as $n$ independent samples from populations with population covariance matrices $\vv{\Sigma}_{I_j}$'s diagonalizable simultaneously. Let $\lambda_{i,l}^{(p)}$ be the $l$-th eigenvalue of $\vv{\Sigma}_{ip}$. For $i=1,\ldots,M$, we have $\lambda_{i,l}^{(p)}=H_{ip}^{-1}(l/p)$ for $1\leq l\leq p$. Meanwhile, it is well-known that $H_{ip}$ converges weakly to $H_i$ if and only if $H_{ip}^{-1}$ converges to $H_i^{-1}$ pointwisely on the set of continuous points of $H_i^{-1}$. By the uniformly boundedness of norms of $\vv{\Sigma}_{ip}$'s, $H_{ip}^{-1}$'s and $H^{-1}_i$'s are uniformly bounded as well. Consequently, for each $i=1,\ldots,M$,
\begin{equation}\label{eq:mix11}
\lim_{p\to\infty}\sum_{l=1}^p |H_{ip}^{-1}(l/p)-H_i^{-1}(l/p)|=0.
\end{equation}

Now, for $\vv{\Sigma}_{I_j,p}$, its eigenvalues are $\lambda_{I_j,l}^{(p)}=H_{I_j,p}^{-1}(l/p)$, which is indexed by $I_j$ and $l/p$. It is suffices for us to verify conditions in Theorem \ref{thm:0.2}. Note that by the law of large numbers, the empirical distribution $G_n$ of $\{I_{1n},\ldots,I_{nn}\}$ converges weakly to the population distribution $\vv{\eta}=(\eta_1,\ldots,\eta_M)$, that is,
$$
G_n(x)=\frac{1}{n}\sum_{i=1}^M I\{I_{in}\leq x\}\overset{\rm a.s.}{\to} \vv{\eta}.
$$
Meanwhile, by the uniform boundedness of $H_{ip}^{-1}$'s and $H_i^{-1}$'s, (\ref{eq:mix11}) and the Dominant Convergence Theorem,
$$
\lim_{n\to\infty}\frac{1}{np}\sum_{j=1}^n\sum_{l=1}^p|H_{I_j,p}^{-1}(l/p)-H_{I_j}^{-1}(l/p)|=0
$$ 
so (\ref{eq:2.10}) is ensured. The conclusion then follows by Theorem \ref{thm:0.2}.
\end{proof}

\begin{proof}[\it Proof of Example \ref{exm:2mix}]
Since $\vv{\Sigma}_1=\vv{I}$, we have $H_1^{-1}(s)=1$. Then, since $-zm(z)=\int_0^1\frac{1}{1+K(s,\lambda)}\d s$, we have
\begin{align*}
1+K(s,z)&=1+\frac{\eta_1}{-z-czm(z)}+\frac{\eta_2 H_2^{-1}(s)}{-z+c\int_0^1\frac{H_2^{-1}(s)}{1+K(s,z)}\d s}\\
&=\(1-\frac{\eta_1}{z+czm(z)}\)+\frac{\eta_2 H_2^{-1}(s)}{-z+c\int_0^1\frac{H_2^{-1}(s)}{1+K(s,z)}\d s}.
\end{align*}
It then follows that
\begin{align*}
1&=\(1-\frac{\eta_1}{z+czm(z)}\)(-zm(z))+\frac{\eta_2}{-z+c\int_0^1\frac{H_2^{-1}(s)}{1+K(s,z)}\d s}\(\int_0^1\frac{H_2^{-1}(s)}{1+K(s,z)}\d s\). 
\end{align*}
Consequently, we have
$$
1+zm(z)\(1-\frac{\eta_1}{z+c zm(z)}\)=\frac{\eta_2}{c}\(1+\frac{z}{-z+c\int_0^1\frac{H_2^{-1}(s)}{1+K(s,z)}\d s}\)
$$
So,
\begin{align*}
c+czm(z)\(1-\frac{\eta_1}{z+czm(z)}\)&=c+czm(z)-\eta_1\(1-\frac{z}{z+czm(z)}\)\\
&=\eta_2+\frac{\eta_2 z}{-z+c\int_0^1\frac{H_2^{-1}(s)}{1+K(s,z)}\d s}
\end{align*}
and
$$
\frac{\eta_2}{-z+c\int_0^1\frac{H_2^{-1}(s)}{1+K(s,z)}\d s}=\frac{1}{z}\l[c+czm(z)-\eta_1\(1-\frac{z}{z+czm(z)}\)-\eta_2\r]=:F(\vv{\eta},z,m(z)).
$$
For simplification, we have
\begin{align*}
F(\vv{\eta},z,m(z))&=\frac{1}{z}\(-1+c+czm(z)+\frac{\eta_1z}{z+czm(z)}\)\\
&=\underline{m}(z)+\frac{\eta_1}{z+czm(z)}
\end{align*}
Then, the kernel function becomes
$$
K(s,z)=-\frac{\eta_1}{z+c zm(z)}+F(\vv{\eta},z,m(z))H_2^{-1}(s)
$$
It then follows that
\begin{align*}
-zm(z)&=\int_0^1\frac{1}{1+K(s,z)}\d s\\
&=\int_0^1\frac{1}{1-\frac{\eta_1}{z+c zm(z)}+F(\vv{\eta},z,m(z))H_2^{-1}(s)}\d s\\
&=\int \frac{1}{1-\frac{\eta_1}{z+c zm(z)}+\left(\underline{m}(z)+\frac{\eta_1}{z+c zm(z)}\right)\lambda}\d H_2(\lambda),
\end{align*}
which completes the proof. 
\end{proof}

%

%
%
%
%
\bibliographystyle{imsart-number}
\bibliography{test}

%

%
\end{document}